\newtheorem{theorem}{Theorem}[section]
\newtheorem{corollary}[theorem]{Corollary}
\newtheorem{definition}[theorem]{Definition}
\newtheorem{lemma}[theorem]{Lemma}
\newtheorem{remark}{Remark}[section]
\newcommand{\ms}{\mathbb{S}}
\newcommand{\mr}{\mathbb{R}}
\newcommand{\mcr}{\mathcal{R}}
\newcommand{\mF}{\mathcal{F}}
\newcommand{\mcs}{\mathcal{S}}
\newcommand{\mcp}{\mathcal{P}}
\newcommand{\mcd}{\mathcal{D}}
\newcommand{\mcc}{\mathcal{C}}
\newcommand{\mct}{\mathcal{T}}
\newcommand{\ext}{{\rm ext}}
\newcommand{\cor}{{\rm corank \hspace*{0.5mm}}}
\newcommand{\ran}{{\rm Ran \hspace*{0.5mm}}}
\newcommand{\Ker}{{\rm Ker \hspace*{0.5mm}}}
\newcommand{\rankspace}{{\rm rank \hspace*{0.5mm}}}
\newcommand{\gd}{{\rm gd}}
\newcommand{\p}{{\bf p}}
\newcommand{\q}{{\bf q}}
\newcommand{\E}{\overline{E}}
\newcommand{\per}{{\rm Pert}}
\newcommand{\gram}{{\rm Gram}}
\newcommand{\mcpa}{\mathcal{P}_G'}
\newcommand{\lin}{{\rm lin}}
\newcommand{\psd}{\succeq}
\newcommand{\la}{\langle}
\newcommand{\ra}{\rangle}
\newcommand{\sfT}{{\sf T}}
\newcommand{\oR}{{\mathbb R}}
\newcommand{\oN}{{\mathbb N}}
\newcommand{\corank}{\text{\rm corank}\hspace*{0.5mm}}
\begin{document}
\title{Positive Semidefinite Matrix Completion,  Universal Rigidity and the Strong Arnold Property}

\author[cwi,til]{M.~Laurent}
\ead{M.Laurent@cwi.nl}

\author[cwi]{A.~Varvitsiotis\corref{cor1}}
\ead{A.Varvitsiotis@cwi.nl}

\cortext[cor1]{Corresponding author: CWI, Postbus 94079,
	      1090 GB Amsterdam. Tel: +31 20 5924170; Fax: +31 20 5924199.}

\address[cwi]{Centrum Wiskunde \& Informatica (CWI), Science Park 123,
	        1098 XG Amsterdam,
	        The Netherlands.}
\address[til]{Tilburg University, 
P.O. Box 90153, 
5000 LE Tilburg, 
The Netherlands.}

\begin{abstract}
This paper addresses the following three topics: positive semidefinite (psd) matrix completions,
  universal rigidity of frameworks, and    the Strong Arnold Property (SAP). 
  We show some strong connections among these topics, using semidefinite programming as unifying theme.
    Our main contribution  is a sufficient condition for constructing partial psd matrices which admit a unique completion to a full psd matrix. Such partial matrices are an essential  tool in the study  of  the Gram dimension $\gd(G)$ of a graph $G$, a recently studied graph parameter related to the low psd matrix completion problem. Additionally, we derive an elementary proof of Connelly's sufficient condition for universal rigidity of tensegrity frameworks and we investigate the links between these two sufficient conditions. We also  give a geometric characterization of psd matrices satisfying  the Strong Arnold Property in terms of nondegeneracy of an associated semidefinite program, which we use to establish some links between the Gram dimension   $\gd(\cdot)$ and the Colin de Verdi\`ere type graph parameter $\nu^=(\cdot)$. 
\end{abstract}

\begin{keyword} Matrix completion \sep tensegrity framework \sep universal rigidity \sep semidefinite programming \sep Strong Arnold Property \sep nondegeneracy 
\end{keyword}
\maketitle

\section{Introduction}

The main motivation for this paper is  the  positive semidefinite (psd) matrix completion problem, defined as follows: Given a graph $G=(V=[n],E)$ and a  vector $a\in \mathbb{R}^{E\cup V}$ indexed by the nodes and the edges of $G$, decide whether there exists a   real symmetric $n\times n$ matrix $X$ satisfying
\begin{equation}\label{eq:psdcompl}
 X_{ij}=a_{ij}\  \text{ for all } \{i,j\} \in V\cup E, \text{ and } X  \text{ is positive semidefinite.}
 \end{equation}
 Throughout the paper  we identify $V$ with the set of diagonal pairs $\{i,i\}$ for $i\in V$.
 Any vector $a\in \oR^{V\cup E}$ can be viewed as a partial symmetric matrix  whose entries are determined only at the diagonal positions (corresponding to the nodes) and at the off-diagonal positions corresponding to the edges of   $G$. A vector  $a\in \oR^{V\cup E}$ is called a {\em $G$-partial psd matrix} when  (\ref{eq:psdcompl}) is feasible, i.e., when the partial matrix $a$  admits at least one completion to a full psd matrix. 
 
  The psd matrix completion  problem   is  an instance of the  semidefinite programming feasibility problem, and as such its  complexity is still unknown~\cite{R97}. 
A successful line of attack embraced in the literature has been  to identify  graph classes for which  some of the handful of known necessary conditions that guarantee that a $G$-partial matrix is completable  are also  sufficient (see e.g. \cite{BJT,GJSW,Lau00}).

\medskip
In this paper we  develop a systematic method for  constructing    partial psd  matrices   with    the property that  they admit  a {\em unique} completion to a fully specified psd matrix. Such partial matrices   are a crucial ingredient for the study of two new  graph parameters considered in \cite{ELV, LV,LV12}, defined in terms of ranks of psd matrix completions of $G$-partial matrices. The first one  is the Gram dimension $\gd(\cdot)$ which we will introduce in Section~\ref{sec:param} and whose study is motivated by the low rank psd matrix completion problem~\cite{LV,LV12}. The second  one is the extreme Gram dimension $\text{egd}(\cdot )$ whose study is motivated by its relevance to the bounded rank Grothendieck constant of a graph~\cite{ELV}.  
Several instances of partial  matrices with a unique psd completion were constructed in \cite{ELV,LV,LV12}, but the proofs were mainly by direct case checking. In this paper we give a sufficient condition for constructing partial psd matrices with a unique psd completion (Theorem \ref{thm:sphcorp}) and using this condition we can recover most examples  of \cite{ELV,LV,LV12} (see Section \ref{sec:examples}).

The condition for  uniqueness  of a  psd completion suggests a  connection to the theory of universally rigid frameworks. A {\em framework}  $G({\bf p})$  consists of  a graph  $G=(V=[n],E)$ together with an  assignment of vectors ${\bf p}=\{p_1,\ldots,p_n\}$ to the nodes of the graph. The framework $G(\p)$ is said to be  {\em universally rigid} if it is the only framework having the same edge  lengths  in any space, up to congruence.  
A related concept is that of global rigidity of frameworks. A framework $G(\p)$ in $\oR^d$  is called {\em globally rigid in $\oR^d$} if up to 
congruence it is the only framework in $\oR^d$ having the same edge  lengths. 
Both  concepts have been extensively studied and there exists  an abundant  literature about them 
(see e.g.~\cite{C82,Cb,Con,CW,GT} and references therein).    

The analogue of the notion  of global rigidity, in the case when  Euclidean distances are replaced by inner products, was   recently investigated in~\cite{SC}. There it is shown that many of the results that are valid in  the setting of Euclidean distances can be adapted  to the so-called `spherical setting'.  The latter terminology refers to the fact that when the vectors $p_1,\ldots,p_n\in \oR^d$ are restricted to lie on the unit sphere then their pairwise inner products lead to the study of the   spherical metric space, where the distance between two points $p_i,p_j$ is given by $\arccos (p_i^\sfT p_j)$, i.e., the angle formed between the two vectors~\cite{SW}.
Taking this analogy further, our sufficient condition for constructing partial psd matrices with a unique psd completion can be interpreted  as the analogue in the spherical setting of Connelly's celebrated sufficient condition for universal rigidity of frameworks (see the respective results from 
Theorem \ref{thm:sphcorp} and Theorem~\ref{thm:con}).

\medskip
The unifying  theme of this paper is semidefinite programming (SDP). In particular, the notions of  SDP nondegeneracy and  strict complementarity play   a crucial  role in this paper. This should come as no surprise   as there are already well established links  between  semidefinite programming  and universal rigidity \cite{A10b} and psd matrix completion with  SDP nondegeneracy \cite{Qi}. To arrive at our results we develop a number of tools that build  upon  fundamental  results guaranteeing   the uniqueness  of optimal  solutions to SDP's.  

Using this machinery we can also  give new proofs of some known results, most notably a short and elementary proof of  Connelly's sufficient condition for universal rigidity (Theorem \ref{thm:con}). With the intention to make Section~\ref{sec:con}  a self contained treatment  of universal rigidity we also  address the case of generic universally rigid frameworks (Section~\ref{sec:generic}).  Lastly,  we investigate the relation between our sufficient condition and Connelly's sufficient condition and show that in some special cases they turn out to be equivalent
(Section \ref{sec:connections}). 

\medskip
In this paper we  also  revisit  a  somewhat elusive matrix property called the {\em Strong Arnold Property} (SAP) whose study is motivated by the celebrated  Colin de Verdi\`ere graph parameter $\mu(\cdot)$ introduced in \cite{CdV98}.  We present  a geometric characterization  of matrices fulfilling the SAP by associating them with  the extreme points of a certain spectrahedron (Theorem \ref{thm:godsil}).  Furthermore, we show  that psd matrices having  the SAP can be understood as nondegenerate solutions of certain SDP's (Theorem \ref{thm:SAPtosdp}).

Lastly, using our tools  we can shed some more light and gain insight on the relation between two graph parameters that have been recently studied in the literature. 
 The first one is the  parameter $\nu^=(\cdot)$ of \cite{H96,H03}, whose study is motivated by its relation to  the Colin de Verdi\'ere graph parameter $\mu(\cdot)$.
The second one is  the {\em Gram dimension} $\gd(\cdot)$ of a graph,  introduced in \cite{LV,LV12},  whose study is motivated by   its relation  to the low rank psd matrix completion problem.
In particular we reformulate $\nu^=(\cdot)$ in terms of the maximum Gram dimension of certain $G$-partial psd matrices satisfying a  nondegeneracy property
(Theorem \ref{thmnewnu}), which enables us to recover  that $\gd(G)\ge \nu^=(G)$ for any graph $G$ (Corollary \ref{thm:gdnu}).

\subsection*{Contents.}
 The paper is organized as follows. In Section \ref{secSDP} we group  some basic facts about semidefinite programming that we need in the paper.
In Section \ref{secGram} we present our sufficient condition for the existence of unique psd completions  (in the general setting of tensegrities, i.e., allowing equalities and inequalities), and we illustrate its use by  several examples. In Section \ref{sec:con} we present a simple proof for  Connelly's sufficient condition for universally rigid  tensegrities (generic and non-generic) and we investigate the links between these two sufficient conditions for the spherical and Euclidean distance settings. Finally in Section \ref{sec:parameters} we revisit the Strong Arnold Property, we present a geometric characterization of psd matrices having the SAP in terms of nondegeneracy of semidefinite programming, which we use to establish a link between the graph parameters $\gd(\cdot)$ and $\nu^=(\cdot)$.

\subsection*{Notation.}
 Let $C$ be a closed convex set. 
A convex subset $F\subseteq C$ is called a {\em face} of  $C$ if,  for any $x,y \in C$, $\lambda x+(1-\lambda)y\in F$ for some scalar $\lambda \in (0,1)$ implies $x,y\in F$.
A  point $x\in C$ is called an  {\em extreme point} of $C$  if the set $\{x\}$ is  a face of $C$.
A vector $z$ is said to be  a {\em perturbation}  of $x \in C$ if  $x \pm \epsilon z \in C$ for some $\epsilon>0$. The set of perturbations of  $x \in C$ form  a linear space which we denote  as $\per_C(x)$. Clearly, $x$ is an extreme point of $C$ if and only if $\per_C(x)=\{0\}$. 

We denote by  $e_1,\cdots,e_n\in \oR^n$   the standard unit vectors in $\oR^n$  and for $1\le i\le j\le n$, we define the symmetric matrices
$E_{ij}=(e_ie_j^\sfT +e_je_i^\sfT)/2$ and $F_{ij}=(e_i-e_j)(e_i-e_j)^\sfT$. Given vectors $p_1,\ldots,p_n\in \oR^d$, $\lin\{p_1,\ldots,p_n\}$ denotes their linear span which is a vector subspace of $\oR^d$. We also use the shorthand notation $[n]=\{1,\ldots, n\}$.

 Throughout $\mcs^n$ denotes the set of real symmetric $n\times n $ matrices and $\mcs^n_+$ the subcone of positive semidefinite matrices.   For a matrix $X \in \mcs^n$ its kernel is  denoted as $\Ker X$ and its range as $\ran X$.
 The {\em corank} of a matrix $X\in \mcs^n$ is the dimension of its kernel.  For a matrix $X\in \mcs^n$, the notation $X\succeq 0$ means that $X$ is positive semidefinite (abbreviated as psd).
The space $\mcs^n$ is equipped with the trace inner product given by  $\langle X,Y\rangle =\text{Tr}(XY)=\sum_{i,j=1}^n X_{ij}Y_{ij}$.
We will use the following property: For two positive semidefinite matrices $X,Y\in\mcs^n_+$, $\la X,Y\ra \ge 0$, and $\la X,Y\ra =0$ if and only if $XY=0$.

Given vectors $p_1,\cdots,p_n\in \oR^d$,  their {\em Gram matrix} is the $n\times n$ symmetric matrix   $\gram(p_1,\ldots,p_n)=(p_i^\sfT p_j)_{i,j=1}^n$.
Clearly, the rank of the Gram matrix $\gram(p_1,\ldots,p_n)$ is equal to the dimension of the linear span of $\{p_1,\ldots,p_n\}$.
Moreover, two systems of vectors $\{p_1,\ldots,p_n\}$ and $\{q_1,\ldots,q_n\}$ in $\oR^d$ have the same Gram matrix, i.e., $p_i^\sfT p_j=q_i^\sfT q_j$ for all $i,j\in [n]$, if and only if 
there exists a $d\times d$ orthogonal matrix $O$ such that $q_i=Op_i$ for all $i\in [n]$.

\section{Semidefinite programming}\label{secSDP}

In this section we  recall some  basic facts  about semidefinite programming.  Our  notation and  exposition   follow  \cite{AHO} (another excellent source is \cite{Pa00}).

A semidefinite program is a convex program  defined as  the minimization of a linear function over an affine section of the cone of positive semidefinite matrices. In this paper we will consider  semidefinite programs of the form:
\begin{equation}\tag{P}\label{eq:primalsdp}
p^*= \underset{X}{\text{sup}} \left\{ \la C,X\ra  :    X \psd 0,\   \la A_i,X \ra=b_i \  (i\in I) , \ \la A_i,X \ra\le b_i  \ (i\in J)\right\}.
 \end{equation}
 Standard semidefinite programs are usually defined involving only linear equalities; we also allow here linear inequalities, since they will be used to model tensegrity frameworks in Sections \ref{secGram} and \ref{sec:con}. 
The dual program of (\ref{eq:primalsdp}) reads:
\begin{equation}\tag{D}\label{eq:dualsdp}
d^*=\underset{y,Z}{\text{inf}}\left\{ \sum_{i\in I\cup J} b_iy_i\  :\  \sum_{i\in I\cup J}y_iA_i-C=Z \succeq 0,\  y_i \ge 0\  (i\in J) \right\}.
\end{equation}
Here, $C \in \mcs^n, A_i \in \mcs^n \ (i\in I\cup J)$ and $b\in \oR^{|I|+|J|}$ are given and $I\cap J=\emptyset$.

We denote the primal and dual feasible regions  by $\mcp$ and $\mcd$, respectively.  
  The primal feasible region 
\begin{equation}\label{specP}
\mcp=\left\{X\in \mcs^n:    X \psd 0,\   \la A_i,X \ra=b_i \  (i\in I) , \ \la A_i,X \ra\le b_i  \ (i\in J)\right\} 
 \end{equation}
 is a convex set  defined as  the intersection of the cone of positive semidefinite matrices with an affine subspace and some affine half-spaces. 
 For $J=\emptyset$, such sets are known as {\em spectrahedra} and, for $J\not=\emptyset$, they are called {\em semidefinite representable} (i.e., they can be obtained as projections of spectrahedra, by using slack variables).  Recently, there has been a surge of interest in the study of semidefinite  representable sets  since  they constitute a rich class of convex sets for which there exist  efficient algorithms for optimizing linear functions over them~\cite{BPT}.

As is well known (and easy to see), 
 weak duality holds: $p^*\le d^*$. Moreover, if the dual (resp. primal) is strictly feasible and $d^{*}>-\infty$ (resp. $p^*<\infty)$,
  then {\em strong duality} holds: $p^*=d^*$ and the primal (resp. dual) optimum value is attained.  

 A  pair  $X,(y,Z)$ of primal and dual optimal solutions are called {\em complementary}   if $XZ=0$ and    {\em strict complementary} if  moreover $\rankspace X+\rankspace Z=n$. For a matrix $X \in \mcp$, let $J_X=\{ i\in J : \la A_i,X\ra=b_i\}$ denote the set of inequality constraints that are  active at $X$.
Similarly, for a matrix $Z \in \mcd$ set $J_Z=\{i\in J : y_i>0\}$. Assuming strong duality,  a pair of primal and dual  feasible solutions $X, (y,Z)$ are both optimal  if and only if 
$\la X,Z\ra =0$  and  $J_Z\subseteq J_X$, i.e., if $y_i>0$  for  some $i\in J$ then  $\la A_i,X\ra=b_i$. We refer to these two conditions as the {\em complementary slackness} conditions. 

The following theorem provides  an explicit characterization of the space of perturbations of an element of the primal feasible region~$\mcp$.

\begin{theorem}\cite{LT,DL} Consider  
a matrix   $X\in \mcp$, written as  $X=PP^\sfT$, where $P\in \oR^{n\times r}$ and $r=\rankspace X$.
 Then,
\begin{equation}\label{eq:pert}
\per_{\mcp}(X)=\left\{PRP^\sfT : R\in \mcs^r,\ \la PRP^\sfT,A_i\ra=0 \ (i \in I\cup J_X)  \right\}.
\end{equation}
\end{theorem}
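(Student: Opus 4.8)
The plan is to separate the three kinds of constraints defining $\mcp$ and treat them one at a time. Since $\mcp$ is cut out by finitely many constraints, a direction $z\in\mcs^n$ satisfies $X\pm\epsilon z\in\mcp$ for some $\epsilon>0$ if and only if simultaneously (i) $X\pm\epsilon z\succeq 0$ for some $\epsilon>0$, (ii) $\la A_i,z\ra=0$ for all $i\in I$, and (iii) $\la A_i,z\ra=0$ for all $i\in J_X$; one can pick a single $\epsilon>0$ that works for the cone and for all the finitely many linear constraints at once, and the inequalities $i\in J\setminus J_X$ that hold strictly at $X$ impose no condition on $z$ because they stay strict under a small enough perturbation. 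Condition (ii) is immediate from $\la A_i,X\pm\epsilon z\ra=b_i=\la A_i,X\ra$, and condition (iii) follows because $\la A_i,X\pm\epsilon z\ra\le b_i=\la A_i,X\ra$ forces $\pm\la A_i,z\ra\le 0$, hence $\la A_i,z\ra=0$.

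The heart of the argument is the claim that, for $X=PP^\sfT$ with $P\in\oR^{n\times r}$ of full column rank, one has $X\pm\epsilon z\succeq 0$ for some $\epsilon>0$ if and only if $z=PRP^\sfT$ for some $R\in\mcs^r$. For the ``if'' direction, write $X\pm\epsilon z=P(I_r\pm\epsilon R)P^\sfT$ and observe that $I_r\pm\epsilon R\succeq 0$ for $\epsilon>0$ small enough since $I_r$ is positive definite, whence $X\pm\epsilon z\succeq 0$. For the ``only if'' direction, fix $u\in\Ker X$; then $u^\sfT(X+\epsilon z)u=\epsilon\, u^\sfT z u\ge 0$ and $u^\sfT(X-\epsilon z)u=-\epsilon\, u^\sfT z u\ge 0$ give $u^\sfT z u=0$, so $\la X+\epsilon z,uu^\sfT\ra=0$. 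Applying the recorded psd orthogonality property (for $Y_1,Y_2\succeq 0$, $\la Y_1,Y_2\ra=0$ implies $Y_1Y_2=0$) to $Y_1=X+\epsilon z$ and $Y_2=uu^\sfT$, we get $(X+\epsilon z)uu^\sfT=0$, hence $(X+\epsilon z)u=0$, and therefore $zu=0$. Thus $\Ker X\subseteq\Ker z$, and since $z$ is symmetric, $\ran z\subseteq\ran X$, which is the column space of $P$. Letting $\Pi=P(P^\sfT P)^{-1}P^\sfT$ be the orthogonal projection onto $\ran X$, we have $\Pi z=z\Pi=z$, so with $R=(P^\sfT P)^{-1}P^\sfT z\,P(P^\sfT P)^{-1}\in\mcs^r$ we obtain $PRP^\sfT=\Pi z\Pi=z$, and $R$ is symmetric because $z$ is.

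Combining the two parts yields precisely the stated description of $\per_{\mcp}(X)$, since $\la A_i,z\ra=\la A_i,PRP^\sfT\ra$. The step I expect to be the main obstacle is the ``only if'' direction of the cone claim: one must upgrade the scalar information $u^\sfT z u=0$ on kernel vectors of $X$ to the identity $zu=0$ — which is exactly where the psd orthogonality property $\la Y_1,Y_2\ra=0\Leftrightarrow Y_1Y_2=0$ is needed — and then verify that the witnessing matrix $R$ can be taken symmetric; both points are handled uniformly through the projection $\Pi=P(P^\sfT P)^{-1}P^\sfT$.
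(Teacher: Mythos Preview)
Your proof is correct. The paper does not actually prove this theorem: it is stated with a citation to \cite{LT,DL} and no argument is given, so there is nothing to compare against beyond checking your reasoning stands on its own. It does: the separation into the psd-cone condition and the linear constraints is clean, the treatment of active versus inactive inequalities is right, and the key step---upgrading $u^\sfT z u=0$ for $u\in\Ker X$ to $zu=0$ via the psd orthogonality property applied to $X+\epsilon z$ and $uu^\sfT$---is exactly what is needed. The final recovery of a symmetric $R$ through the projector $\Pi=P(P^\sfT P)^{-1}P^\sfT$ is also correct, since $\ran z\subseteq\ran X$ gives $\Pi z=z=z\Pi$.
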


 As a direct application, we obtain a characterization for extreme points of the primal feasible region~$\mcp$.
 
\begin{corollary}\label{thm:conic} 
Consider  a matrix $X \in \mcp$, written as $X=PP^\sfT$, where  $P\in \oR^{n\times r}$ and $r=\rankspace X$.
 The following assertions are equivalent:
\begin{itemize}
\item[(i)] $X$ is an extreme point of $\mcp$.
\item[(ii)]  If $R\in \mcs^r$ satisfies $\la P^\sfT A_iP, R\ra =0$ for all  $i \in I\cup J_X$, then $R=0$.
\item[(iii)] ${\rm lin}\{P^\sfT A_iP:  i\in I\cup J_X \}=\mcs^r.$
\end{itemize}
\end{corollary}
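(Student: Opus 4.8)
The plan is to deduce Corollary~\ref{thm:conic} directly from the preceding theorem, which already gives an explicit description of $\per_{\mcp}(X)$. Recall that the notation established earlier tells us $X$ is an extreme point of $\mcp$ if and only if $\per_{\mcp}(X)=\{0\}$. So the whole statement is a matter of translating the condition $\per_{\mcp}(X)=\{0\}$ through the formula \eqref{eq:pert}, and then rewriting the resulting linear-algebra condition in two equivalent forms.

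First I would prove (i)$\Leftrightarrow$(ii). By the theorem, $\per_{\mcp}(X)=\{PRP^\sfT : R\in\mcs^r,\ \la PRP^\sfT,A_i\ra=0\ (i\in I\cup J_X)\}$. Since $P\in\oR^{n\times r}$ has rank $r$, the linear map $R\mapsto PRP^\sfT$ from $\mcs^r$ to $\mcs^n$ is injective: indeed $PRP^\sfT=0$ forces $R=0$ because $P$ has a left inverse. Hence $\per_{\mcp}(X)=\{0\}$ if and only if the only $R\in\mcs^r$ with $PRP^\sfT=0$ is... no, more precisely, if and only if the only $R\in\mcs^r$ satisfying all the constraints $\la PRP^\sfT,A_i\ra=0$ is $R=0$. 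Now I use the adjunction identity $\la PRP^\sfT,A_i\ra=\la R,P^\sfT A_iP\ra$, valid for symmetric matrices under the trace inner product, to rewrite the constraint set as $\la P^\sfT A_iP,R\ra=0$ for $i\in I\cup J_X$. Combining with $X$ being extreme $\iff\per_{\mcp}(X)=\{0\}$ gives exactly (i)$\Leftrightarrow$(ii).

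Next, (ii)$\Leftrightarrow$(iii) is a standard orthogonal-complement duality in the finite-dimensional inner product space $\mcs^r$. Condition (ii) says that the orthogonal complement (in $\mcs^r$) of the set $\{P^\sfT A_iP : i\in I\cup J_X\}$ is $\{0\}$; equivalently the orthogonal complement of the linear span $\lin\{P^\sfT A_iP : i\in I\cup J_X\}$ is $\{0\}$; and a subspace of $\mcs^r$ whose orthogonal complement is trivial must be all of $\mcs^r$. This is precisely (iii).

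I do not expect a genuine obstacle here: the corollary is essentially a repackaging of the theorem, and the only things to be careful about are (a) invoking the injectivity of $R\mapsto PRP^\sfT$ to pass from ``$\per_{\mcp}(X)=\{0\}$'' to ``$R=0$ is the only solution,'' and (b) using the trace-inner-product adjunction $\la PRP^\sfT,A\ra=\la R,P^\sfT AP\ra$ to move $P$ across. The mild subtlety worth a sentence is that $J_X$ (the active inequality constraints at $X$) is the correct index set appearing in \eqref{eq:pert}, so the corollary inherits exactly that index set; no separate argument about which constraints are active is needed. Writing it all out should take only a few lines.
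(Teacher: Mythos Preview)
Your proposal is correct and matches the paper's approach: the paper states Corollary~\ref{thm:conic} without proof, presenting it as a direct application of the preceding theorem on $\per_{\mcp}(X)$, and your argument spells out precisely that application (injectivity of $R\mapsto PRP^\sfT$, the trace adjunction $\la PRP^\sfT,A_i\ra=\la R,P^\sfT A_iP\ra$, and orthogonal-complement duality in $\mcs^r$).
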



 We denote  by $\mcr_r$  the manifold  of 
   symmetric $n \times n$ matrices with  rank equal  to $r$.  
 Given  a matrix   $X\in \mcr_r$,  let   $X=Q\Lambda Q^\sfT$ be its  spectral decomposition, where $Q$ is an  orthogonal matrix whose columns are the eigenvectors of $X$    and $\Lambda$ is the  diagonal matrix with the corresponding  eigenvalues as diagonal entries. Without loss of generality  we may assume  that $\Lambda_{ii}\ne 0$ for $i\in [r]$.
 
\medskip  
 The {\em tangent space} of $\mcr_r$ at $X$ is given by 

\begin{equation}
\mct_X=\left\{Q \left(\begin{matrix}
U & V\\
V^\sfT & 0\end{matrix}\right)Q^\sfT: U \in \mcs^r, V\in \mr^{r \times (n-r)}\right\}.
\end{equation}
Hence, its orthogonal complement is defined by 
\begin{equation}\label{eq:normal}
\mct_X^{\perp}=\left\{ Q \left(\begin{matrix}
0 & 0\\
0 & W\end{matrix}\right)Q^\sfT: W \in \mcs^{n-r}\right\}. 
\end{equation}
We will also use the equivalent description: 
\begin{equation}\label{eq:normal2}
\mct_X^{\perp}=\{ M \in \mcs^n: XM=0\}.
\end{equation}

 
  We now introduce the notions of nondegeneracy and strict complementarity  for the  semidefinite programs (\ref{eq:primalsdp}) and (\ref{eq:dualsdp}) in standard form.
 
\begin{definition}\cite{AHO}\label{def:nondeg} Consider the  pair of primal and dual semidefinite programs 
(\ref{eq:primalsdp}) and (\ref{eq:dualsdp}). 
A matrix $X\in \mcp $  is  called {\em primal nondegenerate} if 
\begin{equation}\label{eq:nondeg}
 \mct_X+\lin\{ A_i : i\in I\cup J_X \}^{\perp}=\mcs^n.  
 \end{equation}
The pair $(y,Z) \in \mcd$ is  called {\em dual nondegenerate} if 
\begin{equation}\label{eq:dnondeg}
\mct_Z+\lin \{ A_i : i \in I\cup J_Z\} =\mcs^n.
\end{equation}
\end{definition}

Next we present  some  well known  results  that  provide  necessary and sufficient conditions for the unicity of  optimal solutions in terms of the notions of primal or dual nondegeneracy and strict complementarity.
With the intention to make the section self-contained we have also included short proofs.

\begin{theorem}\label{thm:uprimal}\cite{AHO}\label{thm:uniqueprimal} 
Assume that the optimal values of~\eqref{eq:primalsdp} and~\eqref{eq:dualsdp} are equal and that both are attained. 
If~\eqref{eq:primalsdp} has a   nondegenerate optimal solution, then~\eqref{eq:dualsdp} has a   unique optimal solution.
(Analogously, if~\eqref{eq:dualsdp} has  a   nondegenerate optimal solution, then~\eqref{eq:primalsdp} has a unique optimal  solution.)
\end{theorem}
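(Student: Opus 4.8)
The plan is to prove the statement by a direct perturbation argument: I will show that if $X$ is a primal nondegenerate optimal solution, then any dual optimal $(y,Z)$ is uniquely determined, by using complementary slackness together with the tangent/normal space decomposition recalled above. First I would fix an arbitrary dual optimal solution $(y,Z)$. Since strong duality holds and both optima are attained, the complementary slackness conditions give $XZ=0$ and $J_Z\subseteq J_X$. The condition $XZ=0$ together with \eqref{eq:normal2} tells us that $Z\in\mct_X^{\perp}$; more precisely, writing $X=Q\Lambda Q^\sfT$ in spectral form with $\Lambda_{ii}\neq 0$ for $i\in[r]$, the relation $XZ=0$ forces $Z=Q\left(\begin{smallmatrix}0&0\\0&W\end{smallmatrix}\right)Q^\sfT$ for some $W\in\mcs^{n-r}$.

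Next I would exploit uniqueness of the dual slack $Z$ first, then uniqueness of $y$. Suppose $(y',Z')$ is another dual optimal solution, so likewise $Z'-Z\in\mct_X^{\perp}$. From the dual feasibility equations, $\sum_{i\in I\cup J}y_iA_i-C=Z$ and $\sum_{i\in I\cup J}y_i'A_i-C=Z'$, subtracting gives $Z-Z'=\sum_{i\in I\cup J}(y_i-y_i')A_i$. Because $J_Z\subseteq J_X$ and $J_{Z'}\subseteq J_X$, and because $y_i=y_i'=0$ for $i\in J\setminus J_X$ can be arranged (any index with $y_i>0$ lies in $J_Z\subseteq J_X$, and the coefficient multiplying a constraint that is inactive in both solutions is zero on both sides), the difference $Z-Z'$ lies in $\lin\{A_i: i\in I\cup J_X\}$. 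So $Z-Z'\in\mct_X^{\perp}\cap\lin\{A_i:i\in I\cup J_X\}$. The primal nondegeneracy condition \eqref{eq:nondeg}, namely $\mct_X+\lin\{A_i:i\in I\cup J_X\}^{\perp}=\mcs^n$, is precisely the orthogonal-complement dual of the statement $\mct_X^{\perp}\cap\lin\{A_i:i\in I\cup J_X\}=\{0\}$ (using $(\mcals+\mcalt)^\perp=\mcals^\perp\cap\mcalt^\perp$ for subspaces, and $\mct_X^{\perp\perp}=\mct_X$). Hence $Z=Z'$.

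Finally, having $Z=Z'$, the equation $\sum_{i\in I\cup J_X}(y_i-y_i')A_i=0$ need not immediately give $y=y'$ unless the $A_i$ ($i\in I\cup J_X$) are linearly independent; so I would instead argue that the relevant quantity for dual optimality — the objective value $\sum b_iy_i$ — is already pinned down, and that "the unique optimal solution" for the dual should be read at the level of $Z$ together with the multiplier vector $y$ restricted to active constraints. In the standard formulation (only equalities, $J=\emptyset$) uniqueness of $(y,Z)$ in full follows once one additionally notes that nondegeneracy is typically stated so that the $A_i$ have no nontrivial linear dependency vanishing against $X$; I would follow \cite{AHO} here and phrase the conclusion as uniqueness of $Z$ (and of $y$ when the $A_i$, $i\in I\cup J_X$, are independent, which is part of the nondegeneracy hypothesis in the inequality-free case). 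The symmetric statement with the roles of \eqref{eq:primalsdp} and \eqref{eq:dualsdp} exchanged is proved identically, replacing $\mct_X$ by $\mct_Z$ and using \eqref{eq:dnondeg}.

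I expect the main obstacle to be the bookkeeping with the inequality constraints in $J$: one has to be careful that the index sets $J_X$, $J_Z$, $J_{Z'}$ interact correctly so that the difference $Z-Z'$ genuinely lands in $\lin\{A_i:i\in I\cup J_X\}$ and not a larger span, and that the sign constraints $y_i\geq 0$ don't obstruct the argument. The clean way around this is the observation that any active-on-one-side-only inequality contributes zero to both dual slacks, so effectively we may work as if $J=J_X$ for both solutions; after that reduction the proof is the short linear-algebra duality between \eqref{eq:nondeg} and $\mct_X^{\perp}\cap\lin\{A_i:i\in I\cup J_X\}=\{0\}$.
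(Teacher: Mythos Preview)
Your argument is correct and matches the paper's proof essentially line for line: take two dual optimal solutions, use complementary slackness to see that $y_i=y_i'=0$ for $i\in J\setminus J_X$ so that $Z-Z'\in\lin\{A_i:i\in I\cup J_X\}$, use $XZ=XZ'=0$ to see that $Z-Z'\in\mct_X^\perp$, and conclude $Z=Z'$ from the orthogonal-complement form of \eqref{eq:nondeg}. One small remark: your phrase ``can be arranged'' for $y_i=y_i'=0$ on $J\setminus J_X$ undersells the point --- it is forced, since $J_Z\subseteq J_X$ and $y_i\ge 0$ with $J_Z=\{i:y_i>0\}$ give $y_i=0$ for $i\in J\setminus J_X$ outright. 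Your closing discussion about uniqueness of $y$ versus uniqueness of $Z$ is a genuine subtlety that the paper's own proof also does not address (it too stops at $Z_1=Z_2$), so you are not missing anything there relative to the paper.
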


\begin{proof}Let $X$ be a nondegenerate optimal solution of~\eqref{eq:primalsdp} and let $(y^{(1)},Z_1),$ $(y^{(2)},Z_2)$ be two dual optimal solutions. Complementary slackness implies that   $y^{(1)}_j=y^{(2)}_j=0$ holds for every $i\in J\setminus J_X$. Hence,   $Z_1-Z_2\in \lin \{A_i: i \in I\cup J_X\} $.  As there is no duality gap we have  that $XZ_1=XZ_2=0$ and then~\eqref{eq:normal2} implies that $Z_1-Z_2\in \mct_{X}^{\perp}$. These two facts combined  with   the assumption that   $X$ is primal  nondegenerate imply   that  $Z_1=Z_2$. The other case is similar.\qed
\end{proof}
\medskip 

The next  lemma provides a characterization of  the space of perturbations in terms of tangent spaces for a pair of strict complementary optimal solutions. 

\begin{lemma}\label{lem:per} Assume that the optimal values of~\eqref{eq:primalsdp} and~\eqref{eq:dualsdp} are equal and that both are attained. Let $X, (y,Z)$ be a strict complementary pair of primal and dual optimal solutions for \eqref{eq:primalsdp} and \eqref{eq:dualsdp}, respectively.  Then, 
\begin{equation}\label{pert1}
 \per_{\mcp}(X)=\lin\{ A_i: i \in I\cup J_X\} ^{\perp}\cap \mct_Z^{\perp},
 \end{equation}
\begin{equation}\label{pert2} 
 \per_{\mcd}(Z)=\lin \{ A_i: i \in I\cup J_Z\}^{\perp}\cap \mct_X^{\perp}.
 \end{equation}
\end{lemma}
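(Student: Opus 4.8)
The plan is to prove \eqref{pert1}; the identity \eqref{pert2} then follows by the symmetric roles of the primal and dual programs. First I would record the two ingredients that the strict complementarity hypothesis buys us. Since $X,(y,Z)$ is a complementary pair, we have $XZ=0$, and since it is strict, $\rankspace X+\rankspace Z=n$. Writing $X=PP^\sfT$ with $P\in\oR^{n\times r}$, $r=\rankspace X$, and letting $Z=QQ^\sfT$-type decompositions or simply using the kernel structure, the condition $XZ=0$ together with $\rankspace X + \rankspace Z = n$ forces $\ran Z=\Ker X$ and $\ran X = \Ker Z$. Consequently $\mct_Z^\perp = \{M\in\mcs^n : ZM=0\}$ (by \eqref{eq:normal2}) consists exactly of those symmetric matrices whose range lies in $\ran X$; equivalently, in the basis adapted to $X$, these are the matrices of the form $PRP^\sfT$ with $R\in\mcs^r$. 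This is the key translation: $\mct_Z^\perp = \{PRP^\sfT : R\in\mcs^r\}$.

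Next I would invoke the perturbation formula \eqref{eq:pert} from the cited Theorem, which gives
\[
\per_{\mcp}(X)=\{PRP^\sfT : R\in\mcs^r,\ \la PRP^\sfT,A_i\ra=0\ (i\in I\cup J_X)\}.
\]
Combining with the previous paragraph, $\per_{\mcp}(X) = \mct_Z^\perp \cap \{M\in\mcs^n : \la M,A_i\ra=0\ (i\in I\cup J_X)\} = \mct_Z^\perp \cap \lin\{A_i : i\in I\cup J_X\}^\perp$, which is exactly \eqref{pert1}. One small point to verify carefully here is that every $M\in\mct_Z^\perp$ really is of the form $PRP^\sfT$ for a \emph{symmetric} $R$: this is immediate because $M$ is symmetric and $P$ has full column rank, so $R = (P^\sfT P)^{-1}P^\sfT M P (P^\sfT P)^{-1}$ is symmetric and $PRP^\sfT = M$ once we know $\ran M\subseteq\ran P$.

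For \eqref{pert2}, I would apply the same argument with the primal and dual interchanged: $Z$ is an optimal solution of the dual, whose feasible region $\mcd$ plays the role of $\mcp$ (after eliminating $y$, the dual feasible set in the variable $Z$ is again the intersection of $\mcs^n_+$ with an affine space and half-spaces, the active constraints being indexed by $I\cup J_Z$), and $X$ supplies the complementary slack matrix with $\ran X=\Ker Z$. The same two facts — the analogue of \eqref{eq:pert} for $\per_{\mcd}(Z)$ and the identity $\mct_X^\perp=\{XRX$-range matrices$\}$ coming from strict complementarity — yield \eqref{pert2}.

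I expect the main obstacle to be bookkeeping rather than depth: one must be slightly careful that the perturbation theorem \eqref{eq:pert} is stated for the primal feasible region in the exact form \eqref{specP}, so to use it for $\per_{\mcd}(Z)$ one needs to observe that the dual feasible region, viewed in the $Z$-variable alone, is of this same form, with the inequality constraints $y_i\ge 0$ for $i\in J$ translating — after using complementary slackness to kill the inactive ones — into the active index set $I\cup J_Z$. The geometric content, namely that strict complementarity makes the two "normal" directions $\mct_X^\perp$ and $\mct_Z^\perp$ complementary orthogonal projections onto $\ran X$ and $\ran Z$, is the crux and is short once stated.
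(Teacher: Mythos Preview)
Your proposal is correct and follows essentially the same route as the paper. The paper's proof simultaneously diagonalizes $X$ and $Z$ by a common orthogonal matrix $Q=(Q_1\ Q_2)$ (possible since $XZ=0$), reads off $X=Q_1\Lambda_1Q_1^\sfT$, $Z=Q_2\Lambda_2Q_2^\sfT$ from strict complementarity, and then says the claim follows from the explicit form \eqref{eq:normal} of the tangent spaces; you achieve the same identification $\mct_Z^\perp=\{PRP^\sfT:R\in\mcs^r\}$ via \eqref{eq:normal2} and a general factorization $X=PP^\sfT$, and then combine with \eqref{eq:pert} exactly as intended. The only difference is cosmetic: orthogonal diagonalization versus an arbitrary full-rank factorization, and your write-up is slightly more explicit about invoking \eqref{eq:pert} and about the bookkeeping needed to cast the dual feasible region in the form \eqref{specP} for \eqref{pert2}.
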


\begin{proof}By assumption,    $ZX=XZ=0$,  which implies that $X$ and $Z$ can be simultaneously diagonalized by the same   orthogonal matrix $Q$.  Let $r=\rankspace X$ and write $Q=(Q_1\ Q_2)$, where the columns of $Q_1\in \mr^{n\times r}$ form a basis of the range of $X$. As $X$ and $Z$ are strict complementary we obtain that  
$$X=Q\left(\begin{matrix}
\Lambda_1 & 0 \\
0 & 0
\end{matrix}\right)Q^\sfT =Q_1\Lambda_1 Q_1^\sfT,\
Z=Q\left(\begin{matrix}
0 & 0 \\
0 & \Lambda_2
\end{matrix}\right)Q^\sfT=Q_2\Lambda_2 Q_2^\sfT,
$$
where $\Lambda_1$ and $\Lambda_2$ are diagonal matrices of sizes $r$ and $n-r$,  respectively.  The claim  follows easily  using  the form   of $\mct_X$ (and $\mct_Z$)  given in \eqref{eq:normal}.\qed
\end{proof}

\medskip 

 The next theorem establishes the converse of Theorem~\ref{thm:uprimal}, assuming strict complementarity. 

\begin{theorem}\label{thm:converse}\cite{AHO}\label{thm:converse} 
Assume that the optimal values of~\eqref{eq:primalsdp} and~\eqref{eq:dualsdp} are equal and that both are attained. 
Let $X, (y,Z)$ be a strict complementary pair of  optimal solutions for \eqref{eq:primalsdp} and \eqref{eq:dualsdp}, respectively, and assume that   $J_X=J_Z$. If $X$ is the unique optimal solution of~\eqref{eq:primalsdp}
 then $(y,Z)$ is   dual nondegenerate.  (Analogously, if $(y,Z)$ is the unique optimal solution of~\eqref{eq:dualsdp}   then $X$ is primal nondegenerate.)
\end{theorem}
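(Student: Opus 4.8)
The plan is to prove the contrapositive: assuming $(y,Z)$ is dual \emph{degenerate}, I will construct a second optimal solution of \eqref{eq:primalsdp} distinct from $X$, contradicting uniqueness. Dual degeneracy means that $\mct_Z+\lin\{A_i:i\in I\cup J_Z\}\neq \mcs^n$, so there exists a nonzero matrix $M\in\mcs^n$ orthogonal to both subspaces, i.e. $M\in\mct_Z^\perp$ and $\la M,A_i\ra=0$ for all $i\in I\cup J_Z$. Since $J_X=J_Z$ by hypothesis, this says exactly $M\in\mct_Z^\perp\cap\lin\{A_i:i\in I\cup J_X\}^\perp$. By Lemma~\ref{lem:per}, equation \eqref{pert1}, this set is precisely $\per_{\mcp}(X)$. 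Hence $M$ is a nonzero perturbation of $X$: there is $\epsilon>0$ with $X\pm\epsilon M\in\mcp$.

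Next I would check that $X+\epsilon M$ is again \emph{optimal} for \eqref{eq:primalsdp}, not merely feasible. It suffices to show $\la C,M\ra=0$. Write $C=\sum_{i\in I\cup J}y_iA_i - Z$ from dual feasibility. Then $\la C,M\ra=\sum_{i\in I\cup J}y_i\la A_i,M\ra - \la Z,M\ra$. For $i\in I\cup J_Z$ we have $\la A_i,M\ra=0$ by the choice of $M$; for $i\in J\setminus J_Z$ we have $y_i=0$; and $\la Z,M\ra=0$ because $M\in\mct_Z^\perp$ means $ZM=0$ (via \eqref{eq:normal2} applied to $Z$), so $\la Z,M\ra=\mathrm{Tr}(ZM)=0$. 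Therefore $\la C,M\ra=0$, and $X+\epsilon M$ attains the same objective value $p^*$, so it is an optimal solution of \eqref{eq:primalsdp} different from $X$ since $M\neq 0$. This contradicts the assumed uniqueness of $X$, proving that $(y,Z)$ must be dual nondegenerate. The analogous statement (uniqueness of the dual optimum forces primal nondegeneracy of $X$) follows by the same argument with the roles of primal and dual interchanged, using \eqref{pert2} in place of \eqref{pert1}.

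The only delicate point is making sure the hypotheses of Lemma~\ref{lem:per} are genuinely in force — strict complementarity of the pair $X,(y,Z)$ and equality of optimal values with attainment — which are all assumed here, and that $J_X=J_Z$ is used precisely to identify the active-constraint index sets so that \eqref{pert1} applies verbatim. I do not expect any real obstacle; the argument is a direct unwinding of Lemma~\ref{lem:per} plus the observation that a perturbation in a direction $M$ orthogonal to $Z$ (hence to $C$ modulo the constraint span) preserves optimality. The symmetry between the two halves of the statement is exact, so the second parenthetical claim needs no separate work beyond citing \eqref{pert2}.
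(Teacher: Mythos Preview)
Your proof is correct and follows essentially the same route as the paper: both arguments hinge on Lemma~\ref{lem:per} (specifically \eqref{pert1}) together with the hypothesis $J_X=J_Z$ to identify $\per_{\mcp}(X)$ with $\mct_Z^\perp\cap\lin\{A_i:i\in I\cup J_Z\}^\perp$. The only difference is that the paper argues directly---uniqueness of $X$ forces $X$ to be an extreme point of $\mcp$ (the optimal set being a face), hence $\per_{\mcp}(X)=\{0\}$, hence dual nondegeneracy---whereas you run the contrapositive and add an explicit check that $\la C,M\ra=0$ to show the perturbed point is optimal; this extra computation is valid but could be bypassed by the extreme-point observation.
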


\begin{proof}By assumption, $X$ is the unique optimal solution of (\ref{eq:primalsdp}). Hence $X$   is an extreme point of the primal feasible region and thus, using (\ref{pert1}),
we obtain that $\mct_Z + \lin\{A_i: i\in I\cup J_X\}=\mcs^n$.
As $J_X=J_Z$, (\ref{eq:dnondeg}) holds and thus $(y,Z)$ is dual nondegenerate.
\qed \end{proof}

 \if 0 \begin{proof}By assumption perfect duality holds and as $X, (y,Z)$ are primal dual optimal it follows that  $X$  and $Z$ can be simultaneously diagonalized by some    orthogonal matrix $Q$.  Let $r=\rankspace X, s=\rankspace Z $ and write $Q=[Q_1\ Q_2]$, where the columns of $Q_1\in \mr^{n\times r}$ form a basis for  $\ran X$ and the columns of $Q_2 \in \mr^{n\times s}$ form a basis for $\ran Z$.  As $X$ and $Z$ are strict complementary we obtain that  $X=Q_1\Lambda_1 Q_1^{\sfT}$ and $Z=Q_2\Lambda_2Q_2^{\sfT}$. Let us assume for contradiction that $(y,Z)$ is dual degenerate, i.e., $\mct_{Z}^{\perp}\cap \lin\{A_1,\ldots,A_m\}\not=\{0\}$. 

Let $X'$ be  a primal optimal solution. By assumption perfect duality holds and thus $X'Z=0$ which implies that $X'=Q_1UQ_1^{\sfT}$ for some $U \in \mcs^r$. Then primal feasibility reduces to 
\begin{equation}\label{lem}
\la A_i,Q_1UQ_1^{\sfT} \ra=b_i \ \forall i\in [m].
\end{equation}
As the elements of $\mct_{Z}^{\perp}$ have the form $Q_1UQ_1^{\sfT}$, the fact that $(y,Z)$ is dual degenerate implies that the homogeneous version of~\eqref{lem} has a non trivial solution and thus~\eqref{lem} has as solutions some affine subspace $W$  of positive dimension. But $\Lambda_1\in W$  and it is positive definite so close to $\Lambda_1$ we can find matrices $U$ psd that give solutions which are optimal.\qed
\end{proof}
\fi

\medskip
As an application we obtain the following characterization for the extreme points of $\mcp$, assuming strict complementarity.

\begin{theorem}\label{main}Assume that the optimal values of~\eqref{eq:primalsdp} and~\eqref{eq:dualsdp} are equal and that both are attained. 
Let $X, (y,Z)$ be a pair of strict complementary optimal solutions of the primal  and dual programs \eqref{eq:primalsdp} and \eqref{eq:dualsdp}, respectively, and assume that  $J_X=J_Z$.
The following assertions are equivalent:
\begin{itemize}
\item[(i)] $X$ is an extreme point of $\mcp$. 
\item[(ii)] $X$ is the unique primal optimal solution of \eqref{eq:primalsdp}.
\item[(iii)] $Z$ is a dual nondegenerate. 
\end{itemize}
\end{theorem}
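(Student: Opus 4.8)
The plan is to prove the three assertions equivalent by closing the cycle $(i)\Rightarrow(iii)\Rightarrow(ii)\Rightarrow(i)$, each arrow being within reach of the preparatory results of this section. First I would prove $(i)\Rightarrow(iii)$: assuming $X$ is an extreme point of $\mcp$ means $\per_{\mcp}(X)=\{0\}$, and since $X,(y,Z)$ is a strict complementary pair of optimal solutions with both optima attained and no duality gap, Lemma~\ref{lem:per} identifies $\per_{\mcp}(X)$ with $\lin\{A_i:i\in I\cup J_X\}^{\perp}\cap\mct_Z^{\perp}$. Taking orthogonal complements in $\mcs^n$ and using $(U^{\perp}\cap W^{\perp})^{\perp}=U+W$ for linear subspaces $U,W$, the condition $\per_{\mcp}(X)=\{0\}$ becomes $\lin\{A_i:i\in I\cup J_X\}+\mct_Z=\mcs^n$; because $J_X=J_Z$, this is exactly the dual nondegeneracy condition~\eqref{eq:dnondeg}, so $(y,Z)$ is dual nondegenerate.

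Next I would handle $(iii)\Rightarrow(ii)$: if $(y,Z)$ is dual nondegenerate, then since it is a dual optimal solution and the optimal values are equal and attained, Theorem~\ref{thm:uprimal} (in its ``analogously'' form) yields that~\eqref{eq:primalsdp} has a \emph{unique} optimal solution, which must be $X$. Finally, for $(ii)\Rightarrow(i)$ I would invoke the elementary fact, needing no SDP structure, that a unique maximizer of a linear functional over a convex set is an extreme point of that set: if $X=\lambda U+(1-\lambda)W$ with $U,W\in\mcp$ and $\lambda\in(0,1)$, then $\la C,X\ra=\lambda\la C,U\ra+(1-\lambda)\la C,W\ra$ combined with the optimality of $X$ forces $\la C,U\ra=\la C,W\ra=\la C,X\ra$, so $U,W$ are optimal and hence $U=W=X$ by uniqueness, i.e.\ $\{X\}$ is a face of $\mcp$.

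I do not anticipate a serious obstacle: the statement is essentially a repackaging of Lemma~\ref{lem:per} and Theorems~\ref{thm:uprimal} and~\ref{thm:converse}. The one point needing care is the bookkeeping of the active-inequality index sets: one must invoke $J_X=J_Z$ precisely where the output of Lemma~\ref{lem:per} (stated with $J_X$) is matched to~\eqref{eq:dnondeg} (stated with $J_Z$), together with the routine complementation identity $(U^{\perp}\cap W^{\perp})^{\perp}=U+W$ in $\mcs^n$. As a variant, $(ii)\Rightarrow(iii)$ is literally Theorem~\ref{thm:converse}, which could replace the first step in combination with the trivial $(ii)\Rightarrow(i)$ above, but the stated cycle is the shortest route.
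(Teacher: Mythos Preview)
Your proposal is correct and essentially matches the paper's own proof. The paper proves the two equivalences $(ii)\Longleftrightarrow(iii)$ (via Theorems~\ref{thm:uprimal} and~\ref{thm:converse}) and $(i)\Longleftrightarrow(iii)$ (via Lemma~\ref{lem:per} and the definition of dual nondegeneracy, exactly as you do); your cycle $(i)\Rightarrow(iii)\Rightarrow(ii)\Rightarrow(i)$ uses the same ingredients, the only cosmetic difference being that you replace the paper's appeal to Theorem~\ref{thm:converse} by the elementary convexity argument for $(ii)\Rightarrow(i)$, which you yourself note as a variant.
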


\begin{proof}The equivalence $(ii) \Longleftrightarrow (iii)$  follows directly from     Theorems \ref{thm:uprimal} and \ref{thm:converse} and  the equivalence $(i)  \Longleftrightarrow (iii)$ follows by Lemma~\ref{lem:per} and the definition of dual nondegeneracy from \eqref{eq:dnondeg}.\qed
\end{proof}

\medskip 
Note that  Theorems \ref{thm:converse} and \ref{main} still hold if we replace the condition $J_X=J_Z$ by the weaker condition:
\begin{equation}\label{eq:weak}
\forall i\in J_X\setminus J_Z\ \ A_i\in \mct_Z+\lin\{A_i: i\in I\cup J_X\}.
\end{equation}
Note also that this condition is automatically satisfied in the  case when $J=\emptyset$, i.e., when the  semidefinite program~\eqref{eq:primalsdp} involves only linear equalities.


\if 0

\fi 

\if 0 
Let $k=\rankspace Z^*$ and for contradiction assume that $Z^*$ is dual degenerate. This implies 
$$ T_{Z^*}(\mathcal{R}_k)+\la A_i \mid i \in [m]\ra \subsetneq \ms^n$$
and thus there exists a nonzero matrix $X_0\in \ms^n$ such that 

\begin{equation}\label{first}
Z^*X_0=0 \text{ and }   \la A_i,X_0\ra =0, \forall i \in [m]. 
\end{equation}

 Let $U$ be a matrix whose columns form a base for $\Ker Z^*$.  As $Z^* X_0=0$ there exists a matrix $\Lambda$ such that $X_0 =U\Lambda$. Since $X_0$  is symmetric and $U^\sfT$ has full row rank we get that there exists a matrix $\Lambda'$ such that 
 $X_0=
 U\Lambda'U^\sfT$.  To finish the proof, let $X^*=PP^\sfT$. As $\rankspace X^* =\cor Z^*$ we have that ${\rm range} X^*={\rm Ker }Z^*  $ 
 which implies that 
 there exists a matrix $\Lambda''$ such that $ U=P\Lambda''$. Combining the above we get that 
 \begin{equation}\label{second}
 X_0 = P\Lambda'''P^\sfT,
 \end{equation}
 where $\Lambda'''=\Lambda''\Lambda' \Lambda''^\sfT$.
 Moreover, as $Z^*$ is dual feasible there exist a vector $y^*\in \mr^m$ such that $Z^*=\sum_{i=1}^my_i^*A_i-C$ and thus 
\begin{equation}\label{third} 
0=\la Z^*,X_0\ra =\la \sum_{i=1}^my_i^*A_i-C, X_0\ra=\la- C,X_0\ra.
\end{equation}

The fact that  $\la X^*,Z^*\ra =0$ implies that $(X^*,Z^*)$ is pair of prima-dual optimal solutions. In particular the primal optimal face is equal to 

$$ \mF_{opt}=\{X \psd 0 \mid \la A_i,X\ra=b_i, \forall i\in [m] \text{ and } \la C,X\ra =\la C,X^*\ra \}.$$
By \eqref{eq:pert} the set of perturbations  of $X^*$ with respect to  $\mF_{opt}$ consists  of all  matrices of the form   $PQP^\sfT$ where $Q \in \ms^r$, $\la PQP^\sfT,A_i\ra=0, \forall i\in [m] $ and $\la C,X\ra =0$. Then, equations (\ref{second}) and~(\ref{third}) imply that $X_0\in \per_{\mF_{opt}}(X^*)$  contradicting  the  hypothesis  that $X^*\in \ext \mF_{opt}$.\qed
\end{proof}

\fi

\if 0
The set $\pi^{-1}(\pi(Z))$ is a spectrahedron defined as 
$$ K:=\pi^{-1}(\pi(Z))=\{ X \psd  0 \mid \la E_{ij},X\ra=Z_{ij}, \forall ij \in E\cup V \}. $$
Also, ${\rm Pert}_K(Z)=\{ PQP^\sfT \mid Q\in \ms^{r\times r}\ \text{ and }  \la PQP^\sfT, E_{ij}\ra=0, \forall ij \in E\cup V\}$ and thus 
$$  {\rm Pert}_K(Z)=\{ PQP^\sfT \mid Q\in \ms^{r\times r} \text{ and }  p_i^\sfTQp_j=0, \forall ij \in E\cup V\}.$$
\fi

\if 0 
\begin{theorem}\label{thm:sap}
Consider the affine space  $$\mathcal{A}=\{X\in \mcs^n: \la A_i,X\ra=b_i,\ i\in [m]\}$$ and assume that   $$\lin \{A_1,\ldots,A_m\}^{\perp}=\lin\{ B_1,\ldots,B_k\},$$
where $B_1,\cdots,B_k\in\mcs^n$. 
 Consider a matrix $M \in \mathcal{A}$ with $\cor M=d$ and let $ P\in \mr^{n\times d}$ be a matrix whose columns form  a basis of $\Ker M$.   
The following assertions are equivalent:
\begin{itemize}
\item[(i)] $\mct_M^\perp\cap \lin\{ A_1,\ldots,A_m\}=\{0\}.$
\item[(ii)] The matrix $PP^\sfT$ is an extreme point of the spectrahedron
$$ \{X \psd 0: \la B_i,X\ra=\la B_i,PP^\sfT \ra  \ \forall i\in [k]\}.$$
\item[(iii)] $\lin \{ P^\sfT B_iP: i\in [k]\}=\mcs^d. $
\end{itemize}
\end{theorem}

\begin{proof}The equivalence $(ii) \Longleftrightarrow(iii) $ follows directly from  Corollary~\ref{thm:conic}.
\medskip 

\noindent $(i) \Longrightarrow (iii)$ Let $R\in \mcs^d$ such that $\la R, P^\sfT B_iP\ra=0, \forall i\in [k].$   The matrix  $Y=PRP^\sfT$ belongs to
  $\lin\{ A_1,\ldots,A_m\}$ and satisfies $MY=0$. By~\eqref{eq:normal2} we have that  $Y\in \mct_M^\perp$ and then   $(i)$ implies $Y=0$ and thus $R=0$.
  
\medskip 
\noindent $ (iii) \Longrightarrow (i)$ Write $M=Q
\left(\begin{matrix}\Lambda_1 & 0\\
0 & 0\end{matrix}\right)Q^\sfT$, where $Q=(Q_1\ P)$ and the columns of $Q_1$ form a basis of the range of $M$.
Consider a matrix    $Y \in \mct_M^{\perp}\cap \lin \{A_1,\ldots,A_m\}$. Then, 
 by \eqref{eq:normal},  $Y=PWP^\sfT$ for some matrix  $W \in \mcs^d$. Moreover,  $\la Y,B_i\ra=\la PWP^\sfT,B_i\ra=0, \forall i\in [k]$, which  implies 
 $\la W, P^\sfT B_iP\ra =0$ for all $i\in [k]$ and thus $W=0$ by (iii). This shows $Y=0$ and thus (i) holds.
  \qed
\end{proof}

\fi 
\section{Uniqueness  of positive semidefinite  matrix completions}\label{secGram}

\subsection{Basic definitions}


Let $G=(V=[n],E)$ be a given graph.  Recall that a vector $a\in \oR^{V\cup E}$ is called a {\em $G$-partial psd matrix} if it admits at least one completion to a full psd matrix, 
 i.e., if the semidefinite program (\ref{eq:psdcompl}) has at least one feasible solution. 
We denote by  $\mcs_+(G)$  the set of all $G$-partial psd matrices. In other words, $\mcs_+(G)$ is equal to the projection of the positive semidefinite cone $\mcs^n_+$ onto the subspace $\oR^{V\cup E}$ indexed by the nodes (corresponding to the diagonal entries) and the edges of $G$.
We can reinterpret $G$-partial psd matrices in terms of Gram representations. Namely, $a\in \mcs_+(G)$ if and only if there exist vectors $p_1,\ldots,p_n\in \oR^d$ (for some $d\ge 1$) such that $a_{ij}=p_i^\sfT p_j$ for all $\{i,j\}\in V\cup E$. This leads to the notion of frameworks which will make the link between the Gram (spherical) setting of this section and the Euclidean distance setting considered in the next section.


A {\em  tensegrity  graph} is a graph $G$  whose edge set is partitioned into three sets: $E=B\cup C\cup S$, whose members are called   {\em bars, cables} and {\em struts}, respectively.   A {\em  tensegrity framework} $G({\bf p})$  consists of  a tensegrity graph  $G$ together with an  assignment of vectors ${\bf p}=\{p_1,\ldots,p_n\}$ to the nodes of $G$. A {\em bar framework} is a tensegrity framework where $C=S=\emptyset$.

\if 0 
\begin{definition} Let $G=([n],E) $ be a tensegrity graph with $E=B \cup C\cup~S$. A  tensegrity  framework $G(\p)$  is said to {\em spherically dominate} a  tensegrity framework $G(\q)$ if the following conditions hold:
\begin{itemize}
\item[(i)] $\|p_i\|=\|q_i\|\ $ for all $i\in [n]$,
\item[(ii)]$ p_i^{\sfT}p_j=q_i^{\sfT}q_j \ $  for all (bars) $ \{i,j\} \in B$,
\item[(iii)]$ p_i^{\sfT}p_j\ge q_i^{\sfT}q_j\ $ for all (cables) $\{i,j\}\in C$,
\item[(iv)]$ p_i^{\sfT}p_j\le q_i^{\sfT}q_j\ $ for all (struts) $\{i,j\} \in S$.
\end{itemize}
\end{definition}
The next definition is the analogue of the notion of congruent  frameworks. 
\begin{definition}
Two frameworks  $G(\p)$ and $G({\bf q})$  are  called {\em spherically congruent} if 
$$ p_i^{\sfT}p_j=q_i^{\sfT}q_j  \ \text{ for all } i,j  \in [n].$$ 
\end{definition}
\fi

Given a tensegrity framework $G(\p)$ consider the following pair of primal and dual semidefinite programs:

\begin{equation}\tag{$\mcp_G$}\label{eq:ssdp} 
\begin{array}{ll}
{\sup}_X \{  0 : X\succeq 0, & \la E_{ij},X\ra=p_i^ \sfT p_j\  \text{ for } \{i,j\} \in V\cup B,\\
& \la E_{ij},X\ra\le p_i^ \sfT p_j \ \text{ for } \{i,j\} \in C,\\

& \la E_{ij},X\ra\ge p_i^ \sfT p_j \ \text{ for } \{i,j\} \in S 
\}
\end{array}
\end{equation}

and 
\begin{equation}\tag{$\mcd_G$} \label{eq:ssdpd}
\begin{array}{ll}
{\inf}_{y,Z} \{ \sum_{ij \in V\cup E} y_{ij}p_i^\sfT p_j : & \sum_{ij \in V\cup E}y_{ij}E_{ij}=Z \psd 0,\\ 
 & y_{ij}\ge 0 \  \text{ for } \{i,j\}  \in C,\\
 &  y_{ij}\le 0\  \text{ for } \{i,j\}  \in S \}.
\end{array} 
\end{equation}

The next   definition captures  the analogue of the notion of universal rigidity for  the  Gram setting.

\begin{definition} A tensegrity framework $G(\p)$ is called {\em universally completable}  if the matrix $\gram(p_1,\ldots,p_n)$ is the unique solution  of the semidefinite program~\eqref{eq:ssdp}. 

\end{definition}
In other words,   a universally completable framework  $G(\p)$ corresponds to a   $G$-partial psd matrix $a\in \mcs_+(G)$, where $a_{ij}=p_i^\sfT p_j$ for all $ \{i,j\} \in V\cup E$,    that admits a unique completion to a full psd matrix. 
Consequently, identifying sufficient conditions  guaranteeing  that a framework $G(\p)$ is universally completable will allow us to construct $G$-partial matrices with a unique psd completion.

\subsection{A sufficient  condition for universal completability}

In this section we  derive  a sufficient condition  for determining the universal  completability of tensegrity frameworks. 

We use the following notation: For a graph $G=(V,E)$, $\overline E$ denotes the set of pairs $\{i,j\}$ with $i\ne j$ and $\{i,j\}\not\in E$, corresponding to the non-edges of $G$. 

\begin{theorem}\label{thm:sphcorp}
Let $G=([n],E)$ be a tensegrity graph with $E=B\cup C\cup S$ and consider a tensegrity framework   $G({\bf p})$ in $\oR^d$ such that   $p_1,\ldots,p_n$ span linearly $\oR^d$.  Assume  there exists a matrix $Z\in \mcs^n$ satisfying the conditions (i)-(vi):
\begin{itemize}
\item[(i)] $Z$ is positive semidefinite.
\item[(ii)]
$Z_{ij}=0$ for all $\{i,j\}\in \overline E$. 
\item[(iii)] $Z_{ij}\ge 0$ for all (cables) $\{i,j\} \in C$  and $Z_{ij}\le 0$ for all (struts) $ \{i,j\} \in S.$
\item[(iv)] $Z$ has corank $d$.
\item[(v)] $\sum_{j\in V}Z_{ij}p_j=0$ for all $i \in [n]$.
\item[(vi)]
For any matrix $R\in \mcs^d$ the following holds: 
\begin{equation}\label{eq:sphconic}
 p_i^\sfT R\  p_j=0\   \forall  \{ i,j\} \in V\cup B \cup \{\{i,j\}\in C\cup S: Z_{ij}\ne 0\}   \Longrightarrow  R=0.
 \end{equation}
\end{itemize}
Then the tensegrity  framework $G({\bf p})$ is universally  completable.
\end{theorem}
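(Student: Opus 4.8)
The plan is to recognize the framework $G(\p)$ as an optimal solution of the semidefinite program $(\mcp_G)$ and to apply the machinery of Theorem \ref{main}. First I would set $X_0 = \gram(p_1,\ldots,p_n)$; since $p_1,\ldots,p_n$ span $\oR^d$, we have $\rankspace X_0 = d$, and writing $P = (p_1^\sfT;\ldots;p_n^\sfT)\in\oR^{n\times d}$ we get $X_0 = PP^\sfT$. By construction $X_0$ is feasible for $(\mcp_G)$, and since the objective function is identically zero, $X_0$ is (trivially) primal optimal. The set of active inequality constraints $J_{X_0}$ consists of all $\{i,j\}\in C\cup S$ with $p_i^\sfT p_j = (X_0)_{ij}$, which is all of $C\cup S$; this is a slightly degenerate feature I would need to handle, but note condition (vi) only requires the implication over $V\cup B\cup\{\{i,j\}\in C\cup S: Z_{ij}\ne 0\}$.

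Next I would build a dual optimal solution from the given matrix $Z$. Condition (ii) says $Z_{ij}=0$ on non-edges, so $Z = \sum_{\{i,j\}\in V\cup E} y_{ij}E_{ij}$ for appropriate scalars $y_{ij}$ (namely $y_{ij}=Z_{ij}$ up to the normalization in the definition of $E_{ij}$); condition (iii) gives the sign constraints $y_{ij}\ge 0$ on cables and $y_{ij}\le 0$ on struts, and condition (i) gives $Z\succeq 0$, so $(y,Z)$ is dual feasible for $(\mcd_G)$. Condition (v) states $X_0 Z = 0$: indeed the $i$-th entry of $(X_0Z)_{i\ell} = \sum_j p_i^\sfT p_j Z_{j\ell} = p_i^\sfT(\sum_j Z_{j\ell}p_j) = 0$. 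Hence $\la X_0,Z\ra = 0$, so $(y,Z)$ is dual optimal and the pair is complementary. Moreover condition (iv) gives $\rankspace Z = n-d$, so $\rankspace X_0 + \rankspace Z = n$: the pair is \emph{strict} complementary. The complementary slackness requirement $J_Z\subseteq J_{X_0}$ holds since $J_{X_0}=C\cup S$.

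Now I would invoke the equivalence (i)$\Leftrightarrow$(ii) of Theorem \ref{main} (in the sharpened form using condition \eqref{eq:weak} rather than $J_X=J_Z$): to conclude $X_0$ is the \emph{unique} primal optimal solution — i.e. $G(\p)$ is universally completable — it suffices to show $X_0$ is an extreme point of $\mcp=\mcp_G$. By Corollary \ref{thm:conic}, $X_0$ is extreme iff the only $R\in\mcs^d$ with $\la P^\sfT E_{ij}P, R\ra = 0$ for all $\{i,j\}\in V\cup B\cup J_{X_0}$ is $R=0$; and $\la P^\sfT E_{ij}P,R\ra = p_i^\sfT R\, p_j$ (symmetrized), so this is exactly the implication that $p_i^\sfT R\, p_j = 0$ on $V\cup B\cup C\cup S$ forces $R=0$. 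This is weaker than what we want — but here condition \eqref{eq:weak} enters: for each strut/cable $\{i,j\}$ with $Z_{ij}=0$ (the constraints in $J_{X_0}\setminus J_Z$), I must check $E_{ij}\in\mct_Z+\lin\{E_{k\ell}:\{k,\ell\}\in V\cup E\}$, which is automatic since $E_{ij}$ is itself in that second span. Thus the active-constraint condition in Corollary \ref{thm:conic} reduces to the subset appearing in (vi), namely $V\cup B\cup\{\{i,j\}\in C\cup S: Z_{ij}\ne 0\}$ — wait, more carefully, \eqref{eq:weak} lets us replace $J_{X_0}$ by $J_Z$ in the nondegeneracy/extremality statement, so extremality of $X_0$ is equivalent to the implication of \eqref{eq:sphconic}. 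Hence condition (vi) gives exactly extremality, and we are done.

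\textbf{Main obstacle.} The delicate point is the bookkeeping around the inequality constraints: $(\mcp_G)$ has $J\ne\emptyset$, all cable/strut constraints are active at $X_0$, but only those with $Z_{ij}\ne 0$ have positive dual multiplier. I expect the crux is to verify carefully that the weakened hypothesis \eqref{eq:weak} applies here — so that Theorem \ref{main} can be used with $J_Z$ in place of $J_{X_0}$ — and then that Corollary \ref{thm:conic}, combined with the identity $\la P^\sfT E_{ij}P,R\ra = (p_i^\sfT R\,p_j + p_j^\sfT R\,p_i)/2 = p_i^\sfT R\,p_j$ for symmetric $R$, turns the abstract extreme-point condition into precisely the implication \eqref{eq:sphconic}. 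The positive-semidefiniteness, corank, and kernel conditions (i), (iv), (v) then slot in cleanly to certify strict complementarity.
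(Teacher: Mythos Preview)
Your SDP-duality approach is sound and genuinely different from the paper's proof, but your execution around the inequality constraints is tangled and contains one real error.

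The clean version of your route is this: conditions (i)--(iii) make $(y,Z)$ dual feasible for $(\mcd_G)$, condition (v) gives $\la Z,X_0\ra=0$ so $(y,Z)$ is dual optimal, and conditions (iv)--(v) together with $\rankspace X_0=d$ give $\Ker Z=\ran P$, whence $\mct_Z^\perp=\{PRP^\sfT:R\in\mcs^d\}$. Since $\la E_{ij},PRP^\sfT\ra=p_i^\sfT Rp_j$, condition (vi) is \emph{exactly} dual nondegeneracy of $(y,Z)$ in the sense of \eqref{eq:dnondeg}, with $J_Z=\{\{i,j\}\in C\cup S:Z_{ij}\ne0\}$. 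Now Theorem~\ref{thm:uprimal} (dual nondegenerate $\Rightarrow$ primal optimum is unique) finishes the proof immediately---no need for extremality, Theorem~\ref{main}, or condition~\eqref{eq:weak}.

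Your detour through Theorem~\ref{main} is where things go wrong. Your verification of~\eqref{eq:weak} (``$E_{ij}$ is itself in that second span'') uses the span over $I\cup J_X$ as literally printed, which makes~\eqref{eq:weak} vacuous; the operative condition should have $J_Z$ in the span, and your argument does not establish that. (It \emph{does} hold, but only because (vi) already gives $\mct_Z+\lin\{E_{ij}:\{i,j\}\in I\cup J_Z\}=\mcs^n$---at which point you have dual nondegeneracy anyway and the extremality step is redundant.) Your statement that ``extremality of $X_0$ is equivalent to \eqref{eq:sphconic}'' is also not right: (vi) is strictly stronger than extremality whenever $J_Z\subsetneq C\cup S$, and it is precisely this extra strength that forces uniqueness in the tensegrity case.

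The paper's own proof takes a different, more elementary path: it does not invoke any of the SDP theorems but argues directly. For an arbitrary feasible $Y$ it shows $0\le\la Z,Y\ra\le\la Z,X_0\ra=0$ using the sign conditions (iii), deduces $\Ker Y\supseteq\Ker X_0$, writes $Y-X_0=PRP^\sfT$, reads off from equality in the chain that $p_i^\sfT Rp_j=0$ on $V\cup B\cup\{\{i,j\}\in C\cup S:Z_{ij}\ne0\}$, and applies (vi). The paper does note afterward that for bar frameworks ($C=S=\emptyset$) the result also follows from Theorem~\ref{main}, which is essentially your route in the easy case where $J=\emptyset$.
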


\begin{proof}Set $X={\rm Gram}(p_1,\ldots,p_n)$.  Assume that   $Y\in \mcs^n_+$  is another matrix which is feasible for the program \eqref{eq:ssdp}, say    $Y=\gram(q_1,\ldots,q_n)$ for some vectors $q_1,\ldots,q_n$. Our goal is to show that $Y=X$. 
  By   $(v)$, $ZX=0$     and thus $\ran X \subseteq \Ker Z$. Moreover, $\dim \Ker Z=d$ by (iv), and $\text{rank} X=d$ since  $\lin\{p_1,\ldots,p_n\}=\oR^d$.
  This implies that $\Ker X= \ran Z$. 
  
By (ii) we can write $Z=\sum_{\{i,j\}\in V\cup E} Z_{ij} E_{ij}$.   Next notice that 
\begin{equation}\label{eq:2}
 0 \le \la Z,Y\ra=\left\la \sum_{\{i,j\} \in V\cup E}Z_{ij}E_{ij},Y\right\ra \le  \sum_{\{i,j\} \in V\cup E}Z_{ij}\la E_{ij},X\ra=\la Z,X\ra=0,
 \end{equation}
 where the first (left most) inequality follows from the fact that $Y,Z\succeq 0$ and the second one from the feasibility of $Y$ for \eqref{eq:ssdp} and the sign conditions (iii) on $Z$.
This gives   $\la Z,Y\ra=0$,  which implies that $\Ker Y \supseteq \ran Z$ and thus $\Ker Y  \supseteq \Ker X$. 

Write $X=PP^\sfT$, where $P\in \oR^{n\times d}$ has rows $p_1^\sfT,\ldots,p_n^\sfT$. From the inclusion $\Ker (Y-X)\supseteq \Ker X$,   we deduce that 
$Y-X=PRP^\sfT$ for some matrix $R\in \mcs^d$.
 
As equality holds throughout in   \eqref{eq:2}, we obtain that $\la E_{ij},Y-X\ra =0$ for all $\{i,j\} \in C\cup S$ 
with $Z_{ij}\not=0$. Additionally, as $X,Y$ are both feasible for~\eqref{eq:ssdp},  we have that $\la E_{ij},Y-X\ra=0$ for all $\{i,j\}\in V\cup B$. 
 Substituting $PRP^{\sfT}$ for $Y-X$, we obtain that $p_i^\sfT Rp_j=0$ for all $\{i,j\}\in V\cup B$ and all $\{i,j\}\in C\cup S$ with $Z_{ij}\ne 0$. We can now apply (vi) and conclude that $R=0$. This  gives 
  $Y=X$, which concludes the proof.\qed
\end{proof}

\medskip
Note that the conditions $(i)$-$(iii)$ express that $Z$ is feasible for the dual semidefinite program \eqref{eq:ssdpd}. In analogy to the Euclidean setting (see Section~\ref{sec:con}),   such matrix  $Z$ is called a {\em spherical stress matrix} for the framework $G(\mathbf p)$. 
Moreover, (v) says that $Z$ is dual optimal and (iv) says that $X=\gram(p_1,\ldots,p_n)$ and $Z$ are strictly complementary solutions to the primal and dual semidefinite programs \eqref{eq:ssdp} and \eqref{eq:ssdpd}.
Finally, in the case of bar frameworks (when $C=S=\emptyset$), condition (vi) means that $Z$ is dual nondegenerate. 
Hence, for bar frameworks, Theorem~\ref{thm:sphcorp} also follows as a direct application of Theorem~\ref{main}.

As a last remark notice that the assumptions  of Theorem~\ref{thm:sphcorp} imply that $n\ge d$. Moreover, for $n=d$, the matrix $Z$ is the zero matrix and  in this   case~\eqref{eq:sphconic} reads: $p_i^\sfT Rp_j=0$ for all $\{i,j\} \in V\cup B$ then $R=0$. Observe that this condition can  be satisfied only when $G=K_n$ and $C=S=\emptyset$, so that  Theorem~\ref{thm:sphcorp} is useful only in the case when $d\le n-1$.

\subsection{Applying the sufficient condition}\label{sec:examples}

In this section we use Theorem~\ref{thm:sphcorp} to construct  several instances of partial psd matrices admitting  a unique psd completion. Most of these  constructions have been considered in~\cite{ELV,LV,LV12}. While  the proofs there for  unicity of the psd completion   consisted of ad hoc arguments and case checking, Theorem~\ref{thm:sphcorp} provides  us  with a unified and systematic approach. In all examples below we only deal with bar frameworks and hence  we apply Theorem \ref{thm:sphcorp} with $C=S=\emptyset$. In particular, there are no sign conditions on the stress matrix $Z$ and  moreover  condition (\ref{eq:sphconic}) assumes  the simpler form: 
If $p_i^\sfT Rp_j=0$ for all  $\{i,j\}\in V\cup E$  then  $R=0$.  

\medskip\noindent
{\bf Example 1: The octahedral graph.}
Consider a framework  for the octahedral  graph $K_{2,2,2}$ defined as follows:
$$p_1=e_1,\  p_2=e_2,\ p_3=e_1+e_2,\  p_4=e_3,\  p_5=e_4,\  p_6=e_5,$$ 
where  $e_i\ (i \in [5])$ denote the standard unit vectors in $\oR^5$ and the numbering of the nodes refers to Figure 1.
In~\cite{LV} it is shown that the corresponding  $K_{2,2,2}$-partial matrix $a=(p_i^\sfT p_j) \in \mcs_+(K_{2,2,2})$ 
admits  a unique psd completion.
This result follows easily, using  Theorem~\ref{thm:sphcorp}.
Indeed it is easy to check that  condition (\ref{eq:sphconic}) holds. Moreover, the matrix 
    $Z=(1,1,-1,0,0,0)(1,1,-1,0,0,0)^\sfT$ is psd with corank 5, it is supported by $K_{2,2,2}$, and satisfies $\la Z, \gram(p_1,\ldots,p_5)\ra=0$.
%
Hence Theorem~\ref{thm:sphcorp} applies  and the claim follows. 

\begin{figure}[h]
\centering \includegraphics[scale=0.4]{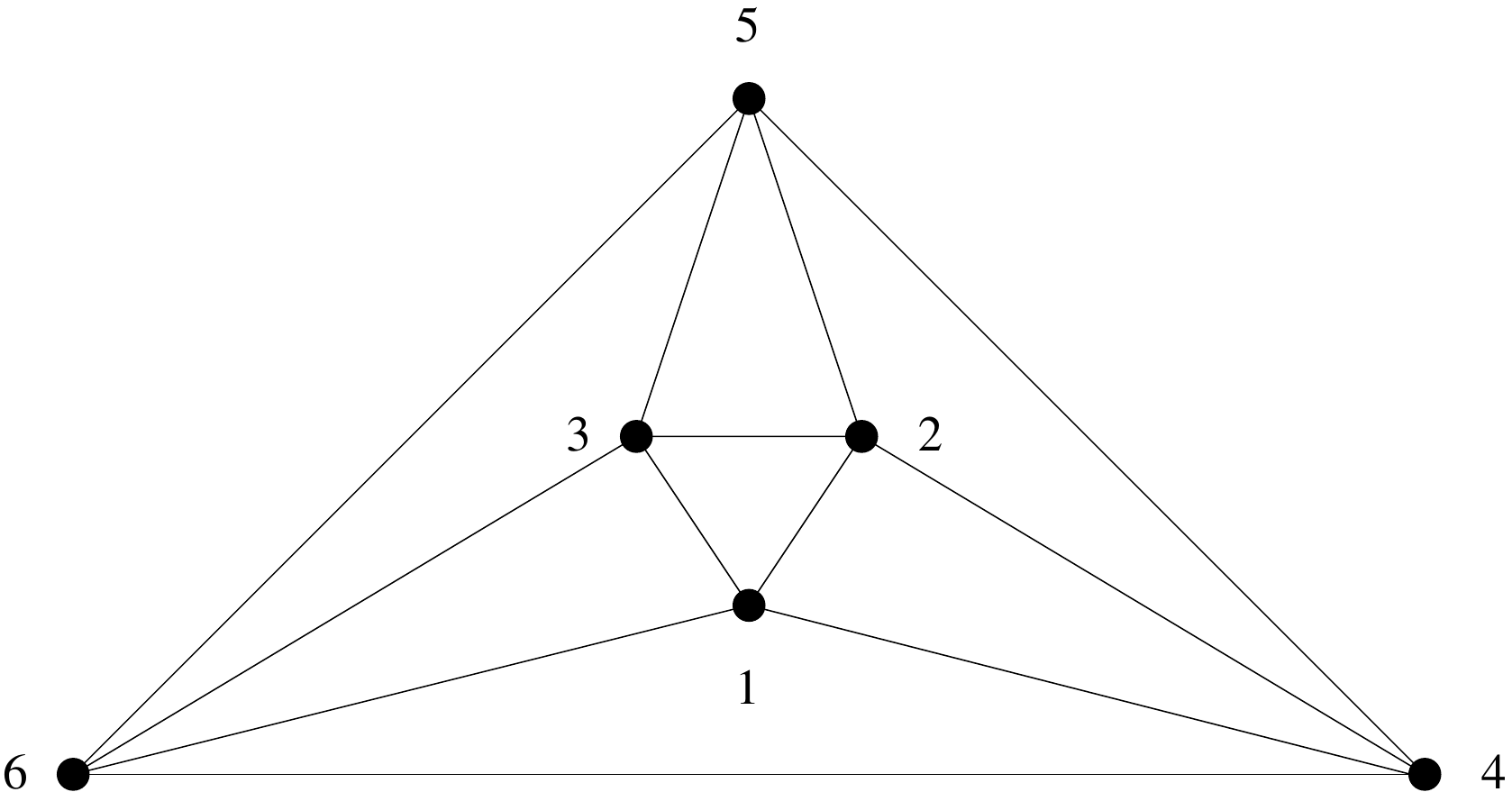}
\label{oct}
\caption{The graph $K_{2,2,2}$.}
\end{figure}

\medskip\noindent
{\bf Example 2: The family of graphs $F_r$.}
For an integer $r \ge 2$,  we define a   graph $F_r=(V_r,E_r)$ with  $r+\binom{r}{2}$ nodes denoted as $v_i$ (for $i \in [r])$ and $v_{ij}$ (for $1\le i<j\le r$). It consists of a central clique of size $r$ based on the nodes $v_1,\ldots,v_r$ together with the cliques $C_{ij}$ on the nodes $\{v_i,v_j,v_{ij}\}$. The graphs $F_3$ and $F_4$ are shown in Figure~2 below. 
\begin{figure}[h]
\centering \includegraphics[scale=0.45]{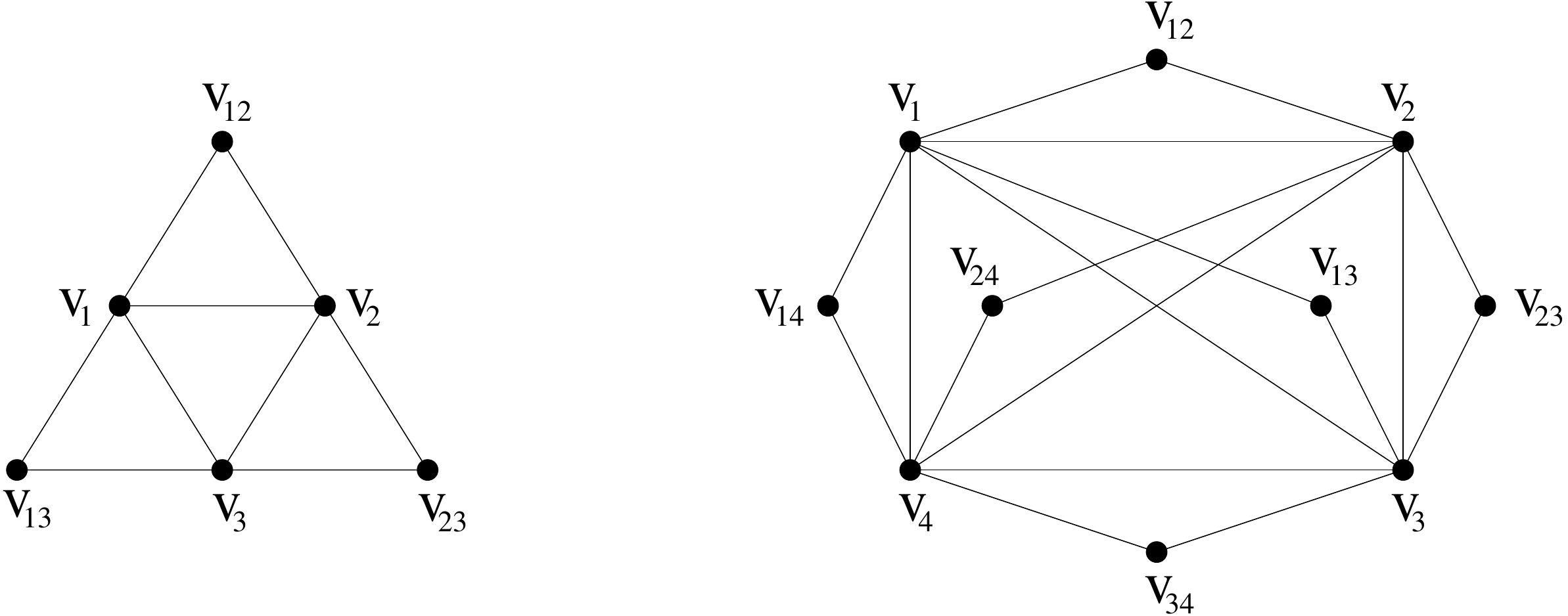}\label{Fr}
\caption{The graphs $F_3$ and $F_4$.}
\end{figure}
We construct a  framework in $\oR^r$  for the graph $F_r$  as follows:
$$p_{v_i}=e_i \text{ for } i\in [r] \text{ and } p_{v_{ij}}=e_i+e_j \text{ for } 1\le i<j\le r.$$
In~\cite{ELV} it is shown that for any $r\ge2$  the corresponding  $F_r$-partial matrix  
admits a unique psd completion.
We now show this result,  using Theorem~\ref{thm:sphcorp}.

Fix $r\ge 2$. It is easy to check  that (\ref{eq:sphconic}) holds. 
Define the nonzero  matrix $Z_r=\sum_{1\le i<j\le r} u_{ij}{u_{ij}}^\sfT$, where the vectors  $u_{ij}\in \mr^{r+\binom{r}{2}}$ are defined as follows: 
For $1\le k\le r$,   $(u_{ij})_k=1$ if $k\in \{i,j\}$ and 0 otherwise; for $1\le k< l\le r$,    $(u_{ij})_{kl}=-1$ if $\{k,l\}=\{i,j\}$ and 0 otherwise.   
By construction, $Z_r$ is psd, it is supported by the graph $F_r$, $\la Z_r, \gram(p_v : v\in V_r)\ra =0$ and  $\corank Z_r=r$. Thus Theorem~\ref{thm:sphcorp} applies  and the claim follows. 



\medskip\noindent{\bf Example 3: The family of graphs $G_r$.} 
This family of graphs has been considered  in the study of the Colin de Verdi\`ere graph parameter~\cite{CdV98}.  For any integer $r\ge 2$ consider an equilateral triangle and subdivide each side into $r-1$ equal segments. Through these points draw line segments  parallel to the sides of the triangle. This construction creates a triangulation of the big triangle into $(r-1)^2$ congruent equilateral triangles.  The graph $G_r=(V_r,E_r)$ corresponds to the edge graph of this triangulation.  Clearly, the  graph $G_r$  has $\binom{r+1}{2}$ vertices, which we denote  $(i,l)$ for $l\in [r]$ and $i\in [r-l+1]$. For any fixed $l\in [r]$ we say that the  vertices $(1,l),\ldots,(r-l+1,l)$ are at level $l$.
Note that $G_2=K_3=F_2$, $G_3=F_3$, but $G_r\ne F_r$ for  $r\ge 4$. The graph $G_5$ is illustrated in Figure~\ref{fig:t5}. 

 \begin{figure}[h]
 \centering
  \includegraphics[scale=0.5]{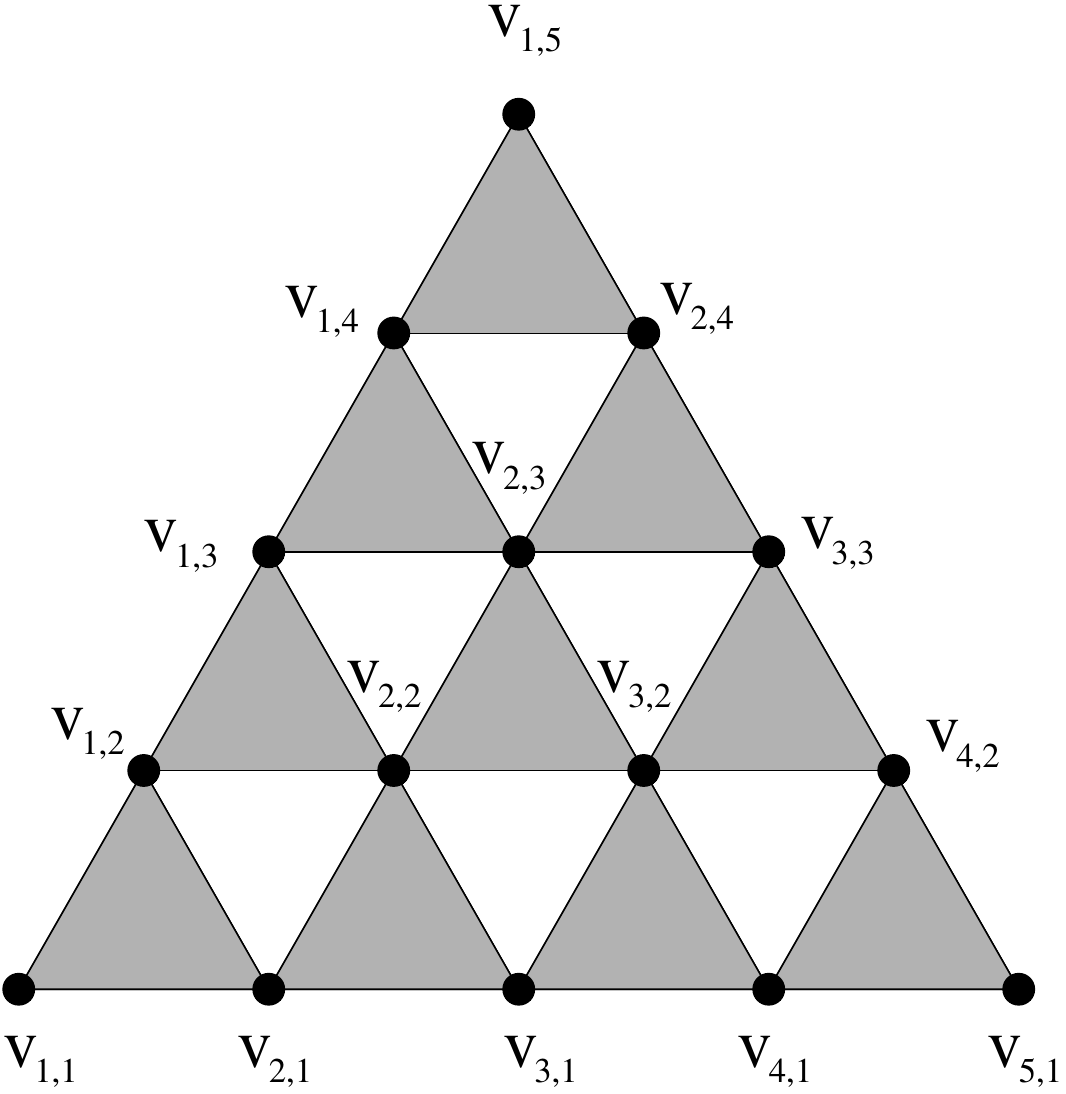}
 \caption{The graph $G_5$.}\label{fig:t5}
 \end{figure}

Fix an integer $r\ge 2$. 
We consider the following    framework  in $\oR^r$ for the graph $G_r$:
\begin{equation}\label{frame}
p_{(i,1)}=e_i\ \forall  i \in [r] \ \text{ and }
p_{(i,l)}=p_{(i,l-1)}+p_{(i+1,l-1)} \ \forall l\ge 2 \text{ and } i\in [r-l+1]. 
\end{equation}
In~\cite{ELV} it is shown that  for any $r\ge 2$ the partial   $G_r$-partial matrix  that corresponds  to the framework defined in~\eqref{frame} has   a unique psd  completion. 
We now recover  this result,  using Theorem~\ref{thm:sphcorp}.

First  we show that this  framework  satisfies  \eqref{eq:sphconic}. For this, consider a matrix  $R \in \mcs^r$  such that 
$p_{(i,l)}^\sfT R\ p_{(i',l')}=0$ for every $\{(i,l),(i',l')\} \in V_r\cup E_r$. Specializing this relation  for $i'=i\in [r]$ and $l'=l=1$ we get  that $R_{ii}=0$ for all  $i\in [r]$ and for  $i'=i+1$ and $l=l'=1$   we get that  $R_{i,i+1}=0$ for $i\in[r-1]$. 
Similarly,   for $i'=i+1$ and $l'=l \ge 2$  we get  that $R_{i,i+l}=0$ for all $ i \in [r-l]$ and thus $R=0$.

We call a triangle in $G_r$ {\em black} if it is of the   form $\{ (i,l), (i+1,l), (i,l+1)\}$   and we denote by   $\mathcal{B}_r$ the set of black triangles in $G_r$. The  black triangles in $G_5$ are illustrated in Figure~\ref{fig:t5} as the shaded triangles. Let $Z_r=~\sum_{t \in \mathcal{B}_r}u_tu_t^\sfT$ where the vector $u_t \in \mr^{\binom{r+1}{2}}$ is  defined as follows: If $t \in \mathcal{B}_r$ corresponds  to the black triangle $\{ (i,l), (i+1,l), (i,l+1)\}$ then $u_t(i,l)=u_t(i+1,l)=1,\  u_t(i,l+1)=-1$ and 0 otherwise.  Since $| \mathcal{B}_r|=\binom{r+1}{2}-r$ and the vectors $(u_t)_{ t \in \mathcal{B}_r}$ are linearly independent we have that $\cor Z_r =r$.
Moreover, as  every edge of $G_r$ belongs to exactly one black triangle we have that $Z_r$ is supported by $G_r$.  By construction of the framework we have that  $\sum_{(i,l) \in V_r} p_{(i,l)}u_t=0$ for all $t\in \mathcal{B}_r$ which implies that  $\la \gram(p_{(i,l)}: (i,l)\in V_r), Z_r\ra =0$. Thus Theorem~\ref{thm:sphcorp} applies and the claim follows.

\medskip\noindent{\bf Example 4: Tensor products of graphs.}
This construction  was considered     in~\cite{PV01},  where universally rigid frameworks were   used as a tool to construct uniquely colorable graphs. The original  construction was carried out  in the Euclidean setting for a suspension bar framework. Here we present the construction in the  spherical  setting which, as we will see in Section~\ref{sec:connections}, is equivalent. 

 Let $H=([n],E)$ be a $k$-regular graph  satisfying $\underset{2\le i\le n}{\max} | \lambda_i|< k/(r-1)$, where $\lambda_1,\ldots,\lambda_n$ are the eigenvalues of its adjacency matrix $A_H$.
 For $r\in\oN$ we   let $G_r=(V_r,E_r)$ denote  the graph $K_r \times H$, obtained by taking the tensor product of the complete graph $K_r$ and the graph $H$. By construction, the adjacency matrix of $G_r$ is the tensor product of the adjacency matrices of $K_r$ and $H$: $A_{G_r}= A_{K_r}\otimes A_H$.
 Let us denote the vertices of $G_r$ by the pairs $(i,h)$ where $i\in [r]$ and $h \in V(H)$. 
 
Let $w_1,\ldots,w_{r}\in \oR^{r-1}$ be vectors that linearly span $\oR^{r-1}$ and moreover satisfy  $\sum_{i=1}^r w_i=0$.
 We construct a  framework for   $G_r$ in $\oR^r$ by assigning to all nodes $(i,h)$ for $h\in V(H)$ the vector $p_{(i,h)}=w_i$, for each $i\in [r]$.
We now show, using Theorem~\ref{thm:sphcorp},  that  the associated $G_r$-partial matrix   admits a unique psd  completion.

First  we show that this  framework  satisfies~\eqref{eq:sphconic}. For this, consider a matrix $R\in \mcs^r$ satisfying $p_{(i,h)}^\sfT  R  
p_{(i',h')}=0$ for every $\{(i,h),(i',h')\}\in V_r\cup E_r$. This implies that 
$w_i^\sfT Rw_j=0$  for all $i,j\in [r]$ and as $\lin\{w_iw_j^\sfT+w_jw_i^\sfT : i,j \in[r]\}=\mcs^r$ it follows that $R=0$.

Next  consider the matrix  $Z_k=I_{rn}+{1 \over k} A_{G_r} \in \mcs^{rn}$, where $I_{rn}$ denotes the identity matrix of size $rn$.  Notice that the matrix $Z_r$ is  by construction supported by $G_r$.
One can  verify directly  that $\la \gram(p_{(i,h)}: (i,h) \in V_r), Z\ra=0$.
The eigenvalues of $A_{K_r}$ are $r-1$ with multiplicity one  and $-1$ with multiplicity $r$. This fact combined with the assumption  on the eigenvalues of $H$  implies   that  $Z_r$ is positive semidefinite with  $\cor Z_r=r-1.$ Thus Theorem~\ref{thm:sphcorp} applies and the claim follows. 

\medskip\noindent{\bf Example 5: The odd cycle $C_5$.}
The last example  illustrates the fact that sometimes the sufficient conditions from  Theorem~\ref{thm:sphcorp} cannot be used to show existence of a unique psd completion.
Here we consider the 5-cycle  graph $G=C_5$ (although it is easy to generalize the example to arbitrary odd cycles).

First we  consider the framework in $\oR^2$  given by   the vectors $$p_i=(\cos ( { 4(i-1)\pi/ 5}),\sin ({ 4(i-1) \pi / 5}))^\sfT  \ \text{  for } 1\le i\le 5.$$ 
The corresponding  $C_5$-partial matrix   has  a unique psd completion, and this can be shown using  Theorem~\ref{thm:sphcorp}.    

It is easy to see that~(\ref{eq:sphconic}) holds. Let $A_{C_5}$ denote the adjacency matrix of $C_5$  and recall that its eigenvalues are $2\cos { 2 \pi \over 5}$ and $-2\cos { \pi \over 5}$, both with multiplicity two and $2$ with multiplicity one. 
Define  
$Z=2\cos { \pi \over 5}I+A_{C_5}$ and notice that   $Z\succeq 0$ and $\corank Z=2$. Moreover,  one can verify that  $\sum_{j \in [5]} Z_{ij}p_j=0$ for all $i \in [5]$ which implies that  $\la Z, \gram(p_1,\ldots,p_5)\ra=0$. Thus Theorem~\ref{thm:sphcorp} applies and the claim follows.

Next we  consider another framework for $C_5$  in $\oR^2$ given by the vectors 
$$q_1=(1,0)^\sfT, q_2=(-1/\sqrt 2, 1/\sqrt 2)^\sfT, q_3= (0,-1)^\sfT, q_4=(1/\sqrt 2,1/\sqrt 2)^\sfT,$$ $$ q_5=(-1/\sqrt 2,-1/\sqrt 2)^\sfT.$$ 
We now show  that  the corresponding $C_5$-partial matrix admits a unique psd completion. 
This cannot be shown using Theorem~\ref{thm:sphcorp} 
since   there does not exist a  nonzero  matrix $Z\in \mcs^5$ supported by $C_5$ satisfying  $\la Z,\gram(q_1,\ldots q_5)\ra=0$. 
Nevertheless one can prove that there exists a unique   psd completion by using the following geometric argument. 

Let $X\in \mcs^5_+$ be a psd completion of the partial matrix and set $\vartheta_{ij}=\arccos X_{ij}\in [0,\pi]$ for $1\le i\le j\le 5$. Then,
$\vartheta_{12}=\vartheta_{23}=\vartheta_{34} =\vartheta _{45}= 3\pi/4$ and $\vartheta_{15}=\pi$. Therefore,  the following  linear equality holds:

\begin{equation}\label{eqangle1}
\sum_{i=1}^5 \vartheta_{i,i+1} = 4\pi
\end{equation}
(where indices are taken modulo 5).
As we will see this implies that the remaining angles are uniquely determined by the relations:
\begin{equation}\label{eqangle2}
\vartheta_{i,i+2}+ \vartheta_{i,i+1}+\vartheta_{i+1,i+2}= 2\pi \ \text{ for } 1\le i\le 5
\end{equation}
and thus that $X$ is uniquely determined
To see why the identities (\ref{eqangle2}) hold, we use the well known fact that the angles $\vartheta_{ij}$ satisfy the (triangle)  inequalities:
\begin{equation}
\label{eqangle3}
\vartheta_{12}+\vartheta_{23}+\vartheta_{13}\le 2\pi,\ -\vartheta_{13}-\vartheta_{14}+\vartheta_{34}\le 0, \ \vartheta_{14}+\vartheta_{45}+\vartheta_{15}\le 2\pi.
\end{equation}
Summing up the three inequalities in (\ref{eqangle3}) and combining with (\ref{eqangle1}), we deduce that equality holds throughout in (\ref{eqangle3}).
This permits to derive the values of $\vartheta_{13}=\pi/2$ and $\vartheta_{14}=\pi/4$ and proceed analogously for the remaining angles. 
(For details  on the parametrization of positive semidefinite matrices using the $\arccos$ map, see \cite{BJT} or \cite{Lau97}).
\if 0 
 \subsection{Universal rigidity of spherical  frameworks}

Our goal in this section is to show that Connelly's sufficient condition specialized for graphs with a suspension node is equivalent with our condition for rigidity of sperical frameworks.

The following result  is a special instance of a  more general statement~\cite{Schoneberg}.

\begin{theorem}Consider the map $\tau_i: \ms^{n+1} \mapsto \ms^{n+1}$ defined as:
\begin{equation}\label{tau}
 \tau_i(D)=-\frac{1}{2}(I-e_{i}e^\sfT)D(I-ee_{i}^\sfT), \text{ for any } i\in [n+1].
 \end{equation}
Then $\tau_i$ is a isomorphism between ${\rm EDM}_{n+1}$ and $\ms^{n}_+$.
\end{theorem}

Working out~(\ref{tau}) explicitly we see that $\tau_i(D)_{kk}=d_{ki}$ for $k\in \{1,\ldots,n+1\}\setminus\{i\}$ and $\tau_i(D)_{kl}=(d_{ki}+d_{li}-d_{kl})/2$. 

\begin{lemma}The framework $(\nabla G, p_0=0,p_1,\ldots,p_{n})$ is UR in Euclidean space iff the framework $(G,p_1/\|p_1\|,\ldots,p_n/\|p_n\|)$
is UR in the spherical space.
\end{lemma}

\begin{proof}Let   $V(\nabla G)=\{0,\ldots,n\}$ and assume that  0 labels  the suspension node. 
Consider the $\nabla G$-partial EDM corresponding to the framework $(\nabla G, 0,p_1,\ldots,p_n)$, i.e., 

$$D=\left[\begin{array}{ccccc}
0 & \|p_1\|^2 & \|p_2\|^2 &  \ldots \ldots  &  \|p_{n}\|^2  \\
\|p_1\|^2 & 0 & \|p_1-p_2\|^2& \ldots \ldots &  \|p_1-p_{n}\|^2 \\
\vdots & \vdots & & &  \vdots\\
\|p_{n}\|^2 & \|p_1-p_{n}\|^2 & & & 0
\end{array}\right]$$

Moreover  consider the $G$-partial psd matrix $B$ where $B_{ii}=\|p_i\|^2, i=1,\ldots n $ and $B_{ij}=(\|p_i\|^2+\|p_j\|^2-\|p_i-p_j\|^2)/2=p_i^\sfTp_j$ for $ij \in E$.

Our goal is to show that EDM completions for  $D$ are in one to one correspondence with psd completions for  $B$, which will imply the claim.

 Indeed, if  $D'$ is an EDM completion for   $D$ then   $\tau_1(D')$ will be a psd completion for $B$. Moreover, if $D''\not=D'$  is a different EDM  completion
for  $D$ then $D''_{ij}\not=D'_{ij}$ for some $ij \in \bar{E}$. By the form of the map $\tau_1$ this implies that $\tau_1(D'')\not=\tau_1(D')$.

\if 0 
the Schur complement of the Cayley-Menger matrix
 $$CM(D)=\left[\begin{array}{cc|ccc}
 0 & 1 & 1 & \ldots& 1\\
 1 & 0  & d_{12}& \ldots & d_{1n}\\
 \hline
 1 & d_{21}& 0 &&d_{2n}\\ 
 \vdots& \vdots  &  \vdots&& \vdots \\
 1 & d_{(n+1)1}&d_{(n+1) 2} & & 0
 
 \end{array}\right]$$
wrt the upper left $2 \times 2$ submatrix is  positive semidefinite  and forms  a completion of $B$.
\fi 
On the other hand if $B'=Gram(q_1,\ldots,q_n)$ is a psd completion of $B$ then 
then the vectors $0,q_1,\ldots,q_n$ form an EDM completion for $D$.

\qed
\end{proof}
\fi 

\section{Universal rigidity of tensegrity frameworks}\label{sec:con}
 Our goal in this section  is to give  a concise and self-contained treatment  of some known results concerning the universal rigidity of tensegrity frameworks. In particular, building on ideas from the two previous sections  we give a very short and elementary proof of Connelly's sufficient condition for universal rigidity for both generic and non-generic tensegrity frameworks. Lastly, we also investigate the relation of our sufficient condition from Theorem~\ref{thm:sphcorp} (for the Gram setting) to Connelly's  sufficient condition from Theorem \ref{thm:con} (for the Euclidean distance setting).


\subsection{Connelly's characterization} 
The framework $G(\p)$ is  called  {\em $d$-dimensional} if $p_1,\cdots,p_n \in \mr^d$ and   their affine span is  $\mr^d$. 
A $d$-dimensional framework is said to be in {\em general position} if every  $d+1$ vectors are   affinely independent. 
Given a framework $G(\p)$ in  $ \mr^d$,   its {\em configuration matrix} is  the $n\times d$ matrix $P$ whose rows are the vectors $p_1^\sfT,\ldots,p_n^\sfT$, 
so that   $PP^\sfT=\gram(p_1,\ldots,p_n)$. 
The  framework $G(\p)$ is  said to be  {\em generic} if the  coordinates of the vectors $p_1,\ldots,p_n$    are algebraically independent over the rational numbers.

\begin{definition}\label{defdominate}
Let $G=([n],E) $ be a tensegrity graph with $E=B \cup C\cup S$. A  tensegrity  framework $G(\p)$  is said to {\em dominate} a  tensegrity framework $G({\bf q})$ if the following conditions hold:
\begin{itemize}
\item[(i)]$\|p_i-p_j\|=\|q_i-q_j\|$ for all (bars) $ \{i,j\} \in B$,
\item[(ii)]$ \|p_i-p_j\| \ge \|q_i-q_j\|$ for all (cables) $\{i,j\}\in C$,
\item[(iii)]$ \|p_i-p_j\| \le   \|q_i-q_j\|$ for all (struts) $\{i,j\} \in S$.
\end{itemize}
\end{definition}

Two frameworks  $G(\p)$ and $G({\bf q})$  are  called {\em congruent} if 
$$\|p_i-p_j\|=\|q_i-q_j\|, \ \forall i\not=j  \in [n].$$ 
Equivalently, this means that  $G(\q)$ can be obtained by $G(\p)$ by a rigid motion  of the Euclidean space. In this section we will be concerned  with tensegrity frameworks   which,  up to  the group of rigid motions of the Euclidean space, admit a unique realization.
\begin{definition} A tensegrity framework  $G(\p)$ is called   {\em universally rigid} if it is 
congruent to any  tensegrity it dominates. 
\end{definition}

 
\if
The following  result gives  a geometric characterization of tensegrity frameworks whose edge directions do not lie on a conic at infinity as  the extreme points of a certain  spectrahedron.

\begin{lemma}\label{lem:conicextreme} The edge directions of a tensegrity  framework  $G(\p)$   do not lie on a conic at infinity if and only if  the Gram matrix of the vectors $p_1,\ldots,p_n$   is an extreme point of the spectrahedron  
\begin{equation}\label{eq:spectg}
\left\{X\psd 0 : \la F_{ij},X \ra=\|p_i-p_j\|^2 \  \text{ for } \{i,j\} \in E\right\}.
\end{equation}
\end{lemma}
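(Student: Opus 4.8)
\medskip\noindent The plan is to recognize the spectrahedron in~\eqref{eq:spectg} as the primal feasible region $\mcp$ of a semidefinite program in the standard form~\eqref{eq:primalsdp} --- namely the one with equality constraints $\la F_{ij},X\ra=\|p_i-p_j\|^2$ indexed by $\{i,j\}\in E$ and with no inequality constraints ($J=\emptyset$) --- and then to apply the extreme-point characterization of Corollary~\ref{thm:conic}. Note that the tensegrity partition $E=B\cup C\cup S$ plays no role in this statement, since all edges enter as equalities.

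First I would check that $X:=\gram(p_1,\dots,p_n)=PP^\sfT$ lies in this spectrahedron, where $P$ is the configuration matrix with rows $p_1^\sfT,\dots,p_n^\sfT$: indeed $\la F_{ij},PP^\sfT\ra=(e_i-e_j)^\sfT PP^\sfT(e_i-e_j)=\|P^\sfT(e_i-e_j)\|^2=\|p_i-p_j\|^2$, using $P^\sfT e_i=p_i$. Assuming, as is the case in the applications we have in mind, that $\rankspace X=d$ (equivalently, $p_1,\dots,p_n$ span $\oR^d$ linearly), the matrix $P\in\oR^{n\times d}$ is precisely a full-column-rank factor $X=PP^\sfT$ as required to apply Corollary~\ref{thm:conic} to $X\in\mcp$.

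The computation that makes everything work is $P^\sfT F_{ij}P=\big(P^\sfT(e_i-e_j)\big)\big(P^\sfT(e_i-e_j)\big)^\sfT=(p_i-p_j)(p_i-p_j)^\sfT$, so that $\la P^\sfT F_{ij}P,\,R\ra=(p_i-p_j)^\sfT R\,(p_i-p_j)$ for every $R\in\mcs^d$. By Corollary~\ref{thm:conic}, $X$ is an extreme point of the spectrahedron if and only if the only $R\in\mcs^d$ with $(p_i-p_j)^\sfT R\,(p_i-p_j)=0$ for all $\{i,j\}\in E$ is $R=0$; the negation of this is the existence of a nonzero symmetric $d\times d$ matrix $R$ with $(p_i-p_j)^\sfT R\,(p_i-p_j)=0$ on every edge, which is exactly the statement that the edge directions $p_i-p_j$ ($\{i,j\}\in E$) lie on a common conic at infinity. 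Taking contrapositives yields the claimed equivalence.

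The argument is short and I do not expect a serious obstacle: it is essentially an unwinding of Corollary~\ref{thm:conic}. The one point that genuinely requires care is the rank bookkeeping in the second paragraph --- the identification $P^\sfT F_{ij}P=(p_i-p_j)(p_i-p_j)^\sfT$ with $R$ ranging over all of $\mcs^d$ is legitimate only when $\rankspace\gram(p_1,\dots,p_n)=d$; without that hypothesis one must first factor $X$ through $\oR^r$ with $r=\rankspace X$ and read ``conic at infinity'' in that $r$-dimensional subspace. It is also worth fixing once and for all the convention that the edge directions lie on a conic at infinity means that there exists a nonzero $R\in\mcs^d$ with $(p_i-p_j)^\sfT R(p_i-p_j)=0$ for all $\{i,j\}\in E$, so that the two sides of the equivalence literally negate each other.
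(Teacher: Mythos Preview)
Your argument is correct and is exactly the paper's approach: write $X=\gram(p_1,\dots,p_n)=PP^\sfT$, compute $P^\sfT F_{ij}P=(p_i-p_j)(p_i-p_j)^\sfT$, and invoke Corollary~\ref{thm:conic}. Your extra care about the hypothesis $\rankspace X=d$ (so that $P$ is a genuine full-rank factor and $R$ ranges over all of $\mcs^d$) is a welcome clarification that the paper's one-line proof leaves implicit.
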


\begin{proof}Let $P$ be the $n\times d$ matrix with the vectors  $p_1,\ldots,p_n$ as rows so that  $\gram(p_1,\ldots,p_n)=PP^\sfT$. As  $PF_{ij}P^\sfT =(p_i-p_j)(p_i-p_j)^\sfT$ the claim follows by  applying Corollary~\ref{thm:conic}.
\qed \end{proof} 
 \fi
 
 \medskip
 An essential ingredient for characterizing   universally rigid tensegrities  is the notion of  equilibrium stress matrix which we now  introduce. 
 
\begin{definition} \label{defstress}
A matrix $\Omega \in \mcs^n$ is called an  {\em equilibrium stress matrix}  for   a tensegrity  framework $G(\p)$ if it satisfies:
\begin{itemize}
\item[(i)] $\Omega_{ij}=0$ for all $\{i,j\} \in {\overline E}$. 
\item[(ii)]  $\Omega e=0$ and $ \Omega P=0$, i.e., $\sum_{j\in V} \Omega_{ij} p_j=0$ for all $i\in V$.
\item[(iii)] $\Omega_{ij}\ge 0$ for all  (cables) $\{i,j\} \in C$ and $\Omega_{ij}\le 0$ for all  (struts) $\{i,j\} \in S$.

\end{itemize}
\end{definition}
Note that, by property (i) combined with the condition $\Omega e=0$, any equilibrium stress matrix  $\Omega$ can be written as $\Omega=\sum_{\{i,j\} \in E} \Omega_{ij} F_{ij}$, where we set  $F_{ij}=(e_i-e_j)(e_i-e_j)^\sfT$.


\if 0 
With   any  tensegrity framework $G(\p)$  we can associate   the semidefinite program

\begin{equation}\tag{$P_G$}\label{primal}
\begin{aligned}
& \text{sup} & & 0\\
& \text{subject to} & & \la F_{ij},X \ra=\|p_i-p_j\|^2 \text{ for } ij \in B\\
& & & \la F_{ij},X \ra \le \|p_i-p_j\|^2 \text{ for } ij \in C\\
& & & \la F_{ij},X \ra \ge \|p_i-p_j\|^2 \text{ for } ij \in S\\
& && X \psd 0
\end{aligned}
\end{equation}

whose  Lagrangian\footnote{should  we keep the dual ?} dual is given by
\begin{equation}\tag{$D_G$}\label{dual}
\begin{aligned}
& \text{inf} &&  \sum_{ij \in E}z_{ij}\|p_i-p_j\|^2\\
& \text{subject to}  && z_{ij}\ge 0, \forall ij \in C\\
& &&  z_{ij}\le 0, \forall ij \in S\\
&& & \sum_{ij \in E}z_{ij}F_{ij}\psd 0.
\end{aligned}
\end{equation}

\fi

\smallskip
The following  result (Theorem \ref{thm:con}),  due to R. Connelly, establishes  a {\em sufficient condition} for determining the universal rigidity  of  tensegrities.   All the ingredients  for  its proof are   already present  in~\cite{C82} although there is no explicit statement of the theorem there. An exact  formulation  and a proof of Theorem~\ref{thm:con} can be found in the (unpublished) work ~\cite{Cb}.  We  now give    an elementary proof  of   Theorem~\ref{thm:con} which relies only on basic properties of positive semidefinite matrices. Our proof goes along the same lines as the proof of Theorem \ref{thm:sphcorp} above and it  is substantially shorter and simpler  in comparison  with Connelly's original proof.

\medskip
\begin{theorem}\label{thm:con} 
Let  $G=([n],E)$ be a tensegrity graph with $E=B\cup C\cup S$ and let $G(\p)$ be a tensegrity framework in $\oR^d$ such that   $p_1,\ldots,p_n$ affinely span $\oR^d$. Assume there exists  an equilibrium stress matrix
 $\Omega$   for $G(\p)$ such that:
\begin{itemize}
\item[(i)] $\Omega$ is positive semidefinite.
\item[(ii)] $\Omega$ has corank $d+1$.
\item[(iii)] 
 For any matrix $R \in \mcs^d$ the following holds:
 \begin{equation}\label{CC}
  (p_i-p_j)^{\sfT}R\ (p_i-p_j)=0 \ \forall \{i,j\} \in B \cup\{ \{i,j\}\in C\cup S:  \Omega_{ij}\not=0\} \Longrightarrow R=0.
  \end{equation}
\end{itemize}
Then, $G(p)$ is universally rigid.
\end{theorem}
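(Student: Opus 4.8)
The plan is to mirror exactly the argument used for Theorem~\ref{thm:sphcorp}, but now in the Euclidean-distance (rather than Gram) setting, using the matrices $F_{ij}=(e_i-e_j)(e_i-e_j)^\sfT$ in place of the $E_{ij}$. First I would pass from the affine framework to a linear one by working with the $(d+1)$-dimensional vectors $\tilde p_i=\binom{p_i}{1}\in\oR^{d+1}$; since $p_1,\dots,p_n$ affinely span $\oR^d$, the $\tilde p_i$ linearly span $\oR^{d+1}$, and the Gram matrix $X=\tilde P\tilde P^\sfT$ has rank exactly $d+1$. The key translation is that the equilibrium conditions $\Omega e=0$ and $\Omega P=0$ together say precisely $\Omega\tilde P=0$, i.e.\ $\Omega X=0$, so $\ran X\subseteq\Ker\Omega$; combined with (ii) ($\corank\Omega=d+1$) and $\rank X=d+1$ this forces $\Ker X=\ran\Omega$, so $X$ and $\Omega$ are strictly complementary.

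Next I would take an arbitrary tensegrity framework $G(\q)$ dominated by $G(\p)$, set $\tilde q_i=\binom{q_i}{1}$ and $Y=\tilde Q\tilde Q^\sfT\succeq 0$, and run the same inner-product chain as in \eqref{eq:2}. Writing $\Omega=\sum_{\{i,j\}\in E}\Omega_{ij}F_{ij}$, and using $\la F_{ij},Y\ra=\|q_i-q_j\|^2$, $\la F_{ij},X\ra=\|p_i-p_j\|^2$, the domination inequalities (bars: equality; cables: $\|q_i-q_j\|\le\|p_i-p_j\|$ with $\Omega_{ij}\ge0$; struts: $\|q_i-q_j\|\ge\|p_i-p_j\|$ with $\Omega_{ij}\le0$) give
\begin{equation*}
0\le\la\Omega,Y\ra=\sum_{\{i,j\}\in E}\Omega_{ij}\|q_i-q_j\|^2\le\sum_{\{i,j\}\in E}\Omega_{ij}\|p_i-p_j\|^2=\la\Omega,X\ra=0.
\end{equation*}
Hence $\la\Omega,Y\ra=0$, so $\Ker Y\supseteq\ran\Omega=\Ker X$, which lets me write $Y-X=\tilde P S\tilde P^\sfT$ for some $S\in\mcs^{d+1}$. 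Equality throughout the chain forces $\la F_{ij},Y-X\ra=0$ for all bars and for all cables/struts with $\Omega_{ij}\ne0$, i.e.\ $(\tilde p_i-\tilde p_j)^\sfT S(\tilde p_i-\tilde p_j)=0$ on exactly the index set appearing in \eqref{CC}. Note that $\tilde p_i-\tilde p_j=\binom{p_i-p_j}{0}$, so this expression only involves the top-left $d\times d$ block $R$ of $S$: it reads $(p_i-p_j)^\sfT R(p_i-p_j)=0$ on that set. By hypothesis (iii), $R=0$.

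The remaining, and slightly more delicate, step is to conclude $Y=X$ from $R=0$, since a priori $S$ could have nonzero entries outside its top-left block. Here I would use that $X$ and $Y$ both have all-ones as their last row/column structure: concretely, since $Y=\tilde Q\tilde Q^\sfT$ with each $\tilde q_i$ ending in $1$, the last row and column of $Y$ equal those of $X$ (both encode $\la\tilde q_i,\tilde q_j\ra$'s last coordinate $=1$, matching $X$). Thus $Y-X=\tilde PS\tilde P^\sfT$ has vanishing last row and column; writing $\tilde P=(P\ \mathbf 1)$ and $S=\begin{pmatrix}R&u\\u^\sfT&t\end{pmatrix}$, the identity $\mathbf 1^\sfT$-components force, together with $R=0$, that $Pu\,\mathbf 1^\sfT+\mathbf 1 u^\sfT P^\sfT+t\,\mathbf 1\mathbf 1^\sfT$ has zero last row/column and in fact equals $Y-X$; a short computation (using that $P$ has full column rank $d$ and $\mathbf 1\notin\ran P$ after centering, or simply that $X$ is the unique psd matrix with the prescribed rank-$(d+1)$ Gram structure on $\Ker\Omega$) shows $u=0$, $t=0$, hence $Y=X$. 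Equivalently, and more cleanly: $\la\Omega,Y\ra=0$ already gives $\Ker Y\supseteq\Ker X$, and combined with $\rank Y\le$ (anything) plus the fact that the feasible region of the associated SDP $\mcp_G$ restricted to $\Ker\Omega$ is a face on which $X$ is, by (iii), an extreme point (Corollary~\ref{thm:conic} applied with the $F_{ij}$'s and $\tilde P$), we get $Y=X$. Finally $Y=X$ means $\tilde q_i^\sfT\tilde q_j=\tilde p_i^\sfT\tilde p_j$ for all $i,j$, so $\|q_i-q_j\|=\|p_i-p_j\|$ for all $i,j$, i.e.\ $G(\q)$ is congruent to $G(\p)$; as $G(\q)$ was an arbitrary dominated framework, $G(\p)$ is universally rigid. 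The main obstacle is the passage from $R=0$ to $S=0$ (equivalently, handling the extra apex-coordinate directions), which is why the homogenization trick and the strict-complementarity/extreme-point bookkeeping need to be set up carefully; everything else is a direct transcription of the proof of Theorem~\ref{thm:sphcorp}.
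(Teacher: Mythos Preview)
Your argument tracks the paper's proof closely through the strict-complementarity step, the chain of inequalities, and the conclusion that the top-left $d\times d$ block $R$ of $S$ vanishes. The gap is in the final step: you aim to prove $S=0$ (equivalently $Y=X$), but this is simply false in general and is \emph{not} what the paper proves. If $G(\q)$ is obtained from $G(\p)$ by a nontrivial translation, it is dominated by $G(\p)$ and congruent to it, yet $\tilde q_i^\sfT\tilde q_j\ne\tilde p_i^\sfT\tilde p_j$, so $Y\ne X$. Both of your proposed routes to $S=0$ fail: the ``last row and column of $Y$ equal those of $X$'' claim confuses the last column of $\tilde Q$ (which is all ones) with a nonexistent structure on the $n\times n$ Gram matrix $Y$; and the extreme-point argument fails because $\la \tilde P S\tilde P^\sfT,F_{ij}\ra=(p_i-p_j)^\sfT R(p_i-p_j)$ depends only on the top-left block $R$, so condition~(iii) does \emph{not} make $X$ an extreme point of the feasible region---any $S$ with $R=0$ is a valid perturbation.

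The fix is exactly what the paper does: once $R=0$, write $S=\begin{pmatrix}0&u\\u^\sfT&t\end{pmatrix}$ and compute $(Y-X)_{ij}=(\tilde p_i)^\sfT S\,\tilde p_j=u^\sfT(p_i+p_j)+t$. Then for \emph{every} pair $i,j$ (not just edges),
\[
\la F_{ij},\,Y-X\ra=(Y-X)_{ii}+(Y-X)_{jj}-2(Y-X)_{ij}=0,
\]
since the expression $u^\sfT(p_i+p_j)+t$ is symmetric and additive in exactly the way that makes this telescoping vanish. Hence $\|q_i-q_j\|^2=\la F_{ij},Y\ra=\la F_{ij},X\ra=\|p_i-p_j\|^2$ for all $i,j$, which is congruence. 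You never need $u=0$ or $t=0$; the residual freedom in $S$ corresponds precisely to translations of $\q$, which do not affect distances.
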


\begin{proof}Assume that   $G(\p)$ dominates another  framework $G(\q)$, 
our goal is to  show that  $G(\p)$ and  $G(\q)$ are congruent. Recall that $P$ is the $n\times d$ matrix with the vectors $p_1,\cdots,p_n$ as rows and define the augmented $n \times (d+1)$ matrix $P_a=\left(\begin{matrix} P & e \end{matrix}\right)$  obtained by adding the all-ones vector as last column to $P$.
Set $X=PP^T$ and 
$X_a=P_aP_a^\sfT$, so that $X_a=X+ee^\sfT$.
As the tensegrity $G(\p)$ is $d$-dimensional, we have that $\rankspace \ X_a=d+1.$ 
We claim that 
$\Ker  X_a=\ran \Omega$.
Indeed, as  $\Omega$ is an equilibrium stress matrix for $G(\p)$, we have that $\Omega P_a=0$ and thus $\Omega X_a=0$.
This implies that $\ran X_a \subseteq \Ker \Omega$ and, as  $\cor  \Omega=d+1=\rankspace  X_a$, it follows that   $\Ker X_a=\ran \Omega$.

Let   $Y$ denote the Gram matrix of the vectors $q_1,\cdots,q_n$. We claim that $\Ker Y \supseteq \Ker X_a$.
Indeed, we have that
\begin{equation}\label{eq}
0\le \la\Omega,Y\ra=\left\la\sum_{\{i,j\}\in E}\Omega_{ij}F_{ij},Y\right\ra \le  \sum_{\{i,j\}\in E}\Omega_{ij}\la F_{ij},X_a\ra=\la \Omega,X_a\ra=0.
\end{equation}
 The  first inequality follows from the fact that $\Omega,Y\succeq 0$;   the second inequality holds since
   $\Omega_{ij}\la F_{ij},Y\ra \le \Omega_{ij}\la F_{ij},X\ra=\Omega_{ij}\la F_{ij},X_a\ra$ for all edges $\{i,j\}\in E$, using the fact that    $G(\p)$ dominates $G(\q)$ and the sign conditions on $\Omega$.
Therefore equality holds throughout in   \eqref{eq}. This gives  $\la \Omega,Y\ra=0$, implying  $Y\Omega=0$ (since $Y, \Omega\succeq 0$)
and thus    $\Ker Y \supseteq \ran  \Omega=\Ker X_a$. 
  
  As   $\Ker Y \supseteq \Ker X_a$, we deduce that  $\Ker (Y-X_a)\supseteq \Ker X$ and thus $Y-X_a$ can be written as 
\begin{equation}\label{eq2}
Y-X_a=P_aRP_a^\sfT \ \text{ for some matrix } \ R=\left(\begin{matrix}A & b\cr b^\sfT & c\end{matrix}\right)\in \mcs^{d+1},
\end{equation}
 where $A \in \mcs^d$, $b\in \oR^d$ and $c\in \oR$.
 
 As  equality holds throughout in  \eqref{eq} holds,
  we obtain  $\Omega_{ij}\la F_{ij},Y-X_a\ra=0$ for all $\{i,j\} \in C\cup S$. Therefore,
   $\la F_{ij},P_aRP_a^\sfT \ra=(p_i-p_j)^\sfT A(p_i-p_j)=0 $ 
for all  $\{i,j\} \in B$ and for all $\{i,j\}\in C\cup S$ with $\Omega_{ij}\not=0$. Using condition (iii), this   implies that $A=0$. 
Now, using  \eqref{eq2} and the fact that $A=0$, we obtain that   $$(Y-X_a)_{ij}=b^\sfT p_i+b^\sfT p_j+c\ \ \text{ for all } i,j \in [n].$$ From this follows that 
 $$\|q_i-q_j\|^2=Y_{ii}+Y_{jj}-2Y_{ij}=(X_a)_{ii}+(X_a)_{jj}-2(X_a)_{ij}= \|p_i-p_j\|^2$$ for all $ i,j\in [n],$ thus showing that $G({\bf p})$ and $G({\bf q})$ are congruent. \qed
\end{proof}

 \medskip

Notice that the assumptions of the theorem imply that $n\ge d+1$. Moreover, for $n=d+1$ we get that $\Omega$ is the zero matrix in which  case~\eqref{CC} is satisfied only for $G=K_n$ and $C=S=\emptyset$. Hence Theorem \ref{thm:con} is useful only in the case when $n\ge d+2$. 
\medskip 

There is a natural pair of primal and dual semidefinite programs attached to a given tensegrity framework $G({\bf p})$:
 \begin{equation}\label{primalGp}
 \begin{array}{ll}
 {\sup}_X\ \{0: X\succeq 0, & \la F_{ij},X \ra=\|p_i-p_j\|^2 \  \text{ for } \{i,j\} \in B,\\
& \la F_{ij},X \ra \le \|p_i-p_j\|^2 \  \text{ for } \{i,j\} \in C,\\
& \la F_{ij},X \ra \ge \|p_i-p_j\|^2\   \text{ for } \{i,j\} \in S\},
\end{array}
\end{equation}
\begin{equation}\label{dualGp}
\begin{array}{ll}
{\inf}_{y,Z}\ \{ \sum_{ij \in E}y_{ij}\|p_i-p_j\|^2: & Z=\sum_{ij \in E}y_{ij}F_{ij}\succeq 0,\\
&  y_{ij}\ge 0 \ \text{ for }  \{i,j\}  \in C,\\
&   y_{ij}\le 0\  \text{ for } \{i,j\} \in S\}.
\end{array}
\end{equation}
 The  feasible
  (optimal) solutions of the primal program (\ref{primalGp}) correspond to the frameworks $G({\bf q})$ that are dominated by  $G({\bf p})$, while
   the optimal solutions to the dual program (\ref{dualGp}) correspond to the positive semidefinite  equilibrium stress matrices for the tensegrity 
framework   $G({\bf p})$.  

Both matrices $X=PP^\sfT$ and $X_a=P_aP_a^\sfT$ (defined in the proof of Theorem \ref{thm:con}) are  primal optimal, with $\rankspace X=d$ and $\rankspace X_a=d+1$. Hence,  a   psd equilibrium stress matrix $\Omega$ satisfies the conditions (i) and (ii) of Theorem~\ref{thm:con}   precisely when the pair $(X_a,\Omega)$ is a strict complementary pair  of primal and dual optimal solutions.  

In the case of bar frameworks (i.e., $C=S=\emptyset$), the condition  (iii)  of Theorem \ref{thm:con}   
expresses the fact that the matrix $X=\gram(p_1,\ldots,p_n)$    is an extreme point of the feasible region of (\ref{primalGp}).
Moreover, 
 $X_a$ lies in its relative interior (since  $\Ker Y\supseteq \Ker X_a$ for any primal feasible $Y$, as shown in the above proof of Theorem \ref{thm:con})).

\begin{remark}
In the terminology of Connelly, the condition (\ref{CC}) says that {\em the edge directions} $p_i-p_j$ of $G(\mathbf p)$ for all edges $\{i,j\}\in B$ and all edges $\{i,j\}\in C\cup S$ with nonzero stress $\Omega_{ij}\ne 0$ {\em do not lie on a conic at infinity}.

Observe that this condition   
cannot be omitted in Theorem \ref{thm:con}.
This is illustrated by  the following example, taken from~\cite{A10b}.  Consider the graph $G$ on 4 nodes with edges
$\{1,2\}$, $\{1,3\}$, $\{2,3\}$ and $\{2,4\}$,
 and  the $2$-dimensional bar framework $G(\p)$ given by  
 $$p_1=(-1,0)^\sfT,\  p_2=(0,0)^\sfT,\ p_3=(1,0)^\sfT  \text{ and } p_4=(0,1)^\sfT.$$
Clearly, the framework $G(\p)$ is not  universally rigid (as one can rotate $p_4$ and get a new framework, which is equivalent but not congruent to $G({\bf p})$). 
 On the other hand, the  matrix $\Omega=(1,-2,1,0)(1,-2,1,0)^\sfT$ is the only  equilibrium stress matrix for $G(\p)$, it is positive semidefinite  with corank $3$. Observe however that the condition (\ref{CC}) does not hold (since the nonzero matrix $R=e_1e_2^\sfT +e_2e_1^\sfT$ satisfies $(p_i-p_j)^\sfT R(p_i-p_j)=0$ for all $\{i,j\}\in E$).
\end{remark}


\subsection{Generic universally rigid frameworks}\label{sec:generic}

It is natural to ask for a converse  of Theorem~\ref{thm:con}. This question  has been settled  recently in \cite{GT} in  the affirmative  for generic frameworks (cf.~Theorem~\ref{thmGT}).
First, we  show that, for generic frameworks, the `no conic at infinity' condition (\ref{CC}) can be omitted since it holds automatically. This result was obtained  in \cite{Con} (Proposition 4.3), but for the sake of completeness we have included a different   and more explicit argument. 

We need  some  notation. Given  a framework $G({\bf p})$ in $\oR^k$,   we let $\mcp_{\p}$ denote  the  $\binom{k+1}{2} \times |E|$ matrix,  whose $ij$-th column contains the entries of the upper triangular part of the  matrix 
$(p_i-p_j)(p_i-p_j)^\sfT\in \mcs^k$. For  a subset $I\subseteq E$,  $\mcp_{\p}(I)$ denotes  the $\binom{k+1}{2}\times |I|$ submatrix of $\mcp_{\p}$ whose columns are indexed by edges in $I$. 

\begin{lemma}\label{lem:pol} Let $k\in \oN$ and let $G=([n],E)$ be a graph on  $n \ge k+1$ nodes and with minimum degree at least $k$. Define the polynomial $\pi_{k,G}$ in $kn$ variables by 
$$ \pi_{k,G}(\p)=\sum_{I\subseteq E, |I|=\binom{k+1}{2}} (\det \mcp_{\p}(I))^2$$ 
for ${\bf p}=\{p_1,\ldots,p_n\}\subseteq (\oR^k)^n$.
Then, the polynomial  $\pi_{k,G}$ has integer coefficients and it is not identically zero.
\end{lemma}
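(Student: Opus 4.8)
The statement has two parts: the coefficients of $\pi_{k,G}$ are integers, and $\pi_{k,G}$ is not the zero polynomial. The first part is immediate: each matrix $\mcp_{\p}(I)$ has entries that are polynomials with integer coefficients in the coordinates of the $p_i$ (each entry is a product $(p_i-p_j)_a(p_i-p_j)_b$), so each determinant $\det \mcp_{\p}(I)$ is an integer polynomial, and squaring and summing keeps integer coefficients. So the plan is to dispose of this in one sentence and concentrate on showing $\pi_{k,G}\not\equiv 0$.

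For the nontriviality, since $\pi_{k,G}$ is a sum of squares of polynomials, it suffices to exhibit a single choice of ${\bf p}=\{p_1,\ldots,p_n\}\subseteq(\oR^k)^n$ and a single subset $I\subseteq E$ with $|I|=\binom{k+1}{2}$ such that $\det \mcp_{\p}(I)\ne 0$; equivalently, to find a framework $G({\bf p})$ whose edge-direction matrix $\mcp_{\p}$ has full row rank $\binom{k+1}{2}$. Here is where the hypotheses enter: $n\ge k+1$ and minimum degree $\ge k$. The plan is to pick the points $p_1,\ldots,p_n$ in \emph{general position} in $\oR^k$ (every $k+1$ of them affinely independent); such configurations exist and one can even take them generic. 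The key linear-algebra fact I want is: if $G({\bf p})$ is in general position and every vertex has degree at least $k$, then the vectors $\{(p_i-p_j)(p_i-p_j)^\sfT : \{i,j\}\in E\}$ span $\mcs^k$, i.e. $\operatorname{rank}\mcp_{\p}=\binom{k+1}{2}$. This is a standard statement in rigidity theory (it is exactly the assertion that a graph with minimum degree $\ge k$ in general position does not lie on a conic at infinity); the cleanest route is to argue the contrapositive: if the rank were deficient, there would be a nonzero $R\in\mcs^k$ with $(p_i-p_j)^\sfT R(p_i-p_j)=0$ for all edges $\{i,j\}\in E$, i.e. all edge directions lie on the quadric $\{x : x^\sfT R x=0\}$ at infinity.

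To finish, I would rule out that last possibility using general position. Fix a vertex $i$; it has neighbors $j_1,\ldots,j_k$ (at least $k$ of them), and the vectors $p_{j_1}-p_i,\ldots,p_{j_k}-p_i$ are linearly independent (general position: $p_i,p_{j_1},\ldots,p_{j_k}$ are $k+1$ affinely independent points). The $k$ equations $(p_{j_\ell}-p_i)^\sfT R(p_{j_\ell}-p_i)=0$ are only $k$ conditions, not enough by themselves; the standard trick is to also use edges among the neighbors or, more robustly, to push $R$ vertex by vertex. A cleaner and more self-contained variant: choose the configuration to be \emph{generic} and instead directly build a specific $I$ and specific coordinates where the determinant is a nonzero rational number — e.g. take $n=k+1$ first with $G\supseteq K_{k+1}$ (then $|E|\ge\binom{k+1}{2}$ and one checks $\det\mcp_{\p}\ne0$ for a simplex configuration by an explicit small computation), and then observe that adding vertices/edges only adds columns, which cannot decrease the rank. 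I expect the main obstacle to be making the span/rank argument fully rigorous for an \emph{arbitrary} graph of minimum degree $\ge k$ rather than for $K_{k+1}$: the degree condition has to be leveraged correctly (one really does need each vertex to see $k$ independent directions, and then an inductive "spreading" argument along the graph, or an appeal to the known rigidity-theoretic fact that minimum degree $\ge k$ plus general position forbids a conic at infinity). Once that rank fact is in hand, non-vanishing of $\pi_{k,G}$ at a general-position (hence at some rational, hence at some integer-evaluable) point follows, and since a sum of squares that is positive at one point is not the zero polynomial, the proof is complete.
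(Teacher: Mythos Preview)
Your overall setup is fine (integer coefficients are indeed immediate, and it suffices to exhibit one $I$ and one configuration with $\det\mcp_{\p}(I)\ne 0$), but the central ``linear-algebra fact'' you want to invoke is false as stated, and the backup variant does not cover the general case.

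Concretely, the claim ``if $G(\mathbf p)$ is in general position and every vertex has degree at least $k$, then the matrices $(p_i-p_j)(p_i-p_j)^\sfT$ for $\{i,j\}\in E$ span $\mcs^k$'' fails already for $k=2$ and $G=C_4$: take the four corners of an axis-aligned rectangle. No three of these points are collinear, so the configuration is in general position, yet the four edge directions are $\pm e_1,\pm e_2$, and the two rank-one matrices $e_1e_1^\sfT$, $e_2e_2^\sfT$ span only a $2$-dimensional subspace of the $3$-dimensional space $\mcs^2$. Equivalently, $R=e_1e_2^\sfT+e_2e_1^\sfT$ annihilates all edge directions. So general position plus minimum degree $\ge k$ does \emph{not} forbid a conic at infinity, and your ``spreading'' argument cannot be completed under that hypothesis. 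If instead you strengthen ``general position'' to ``generic'', the statement you are appealing to becomes precisely the content of the lemma (this is exactly how Theorem~\ref{thm:gencon} is \emph{derived} from Lemma~\ref{lem:pol} in the paper), so the argument is circular.

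Your alternative route via $K_{k+1}$ also breaks down: it is true that when $n=k+1$ the minimum degree condition forces $G=K_{k+1}$ and an explicit simplex computation works, but for $n>k+1$ a graph with minimum degree $\ge k$ need not contain any copy of $K_{k+1}$ (e.g.\ the complete bipartite graph $K_{k,k}$ has minimum degree $k$ but is triangle-free), so one cannot ``start from $K_{k+1}$ and add columns''. The paper's proof avoids both pitfalls by inducting on $k$ rather than on $n$: delete a vertex $v$ of $G$; the remaining graph $G\setminus v$ has at least $k$ vertices and minimum degree at least $k-1$, so by induction there exist $p_1,\ldots,p_{n-1}\in\oR^{k-1}$ and $I\subseteq E(G\setminus v)$ of size $\binom{k}{2}$ with $\det\mcp_{\mathbf p}(I)\ne 0$. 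One then lifts these points into $\oR^k$ by appending a zero coordinate, places $v$ at $(0,\ldots,0,1)$, and enlarges $I$ by $k$ edges incident to $v$ (which exist by the degree hypothesis). The resulting $\binom{k+1}{2}\times\binom{k+1}{2}$ matrix is block upper-triangular, and its lower block is nonsingular because the chosen neighbours of $v$ are affinely independent (here genericity of the $p_i$ in $\oR^{k-1}$ is used). This is where the minimum-degree hypothesis actually does its work.
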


\begin{proof}Notice that for   the specific choice of parameters we have that   $|E|
 \ge {nk \over 2}\ge {(k+1)k \over 2}$.  It is clear that $\pi_{k,G}$ has integer coefficients.
We show  by induction on $k\ge 2$ that for every graph $G=([n],E)$ with  $n\ge k+1$ nodes and minimum degree at least $k$ the polynomial $\pi_{k,G}$ is not identically zero. 
 
 For  $k=2$, we  distinguish two cases: $(i)\  n=3$  and $(ii)\  n \ge 4$. In case (i),  $G=K_3$ and, for the vectors $p_1=(0,0)^\sfT, p_2=(1,0)^\sfT, p_3=(0,1)^\sfT$, we have that $\pi_{2,G}(\p)\not=0$. In case (ii), we can now assume without loss of generality  that the edge set contains the following subset  $I=\{\{1,2\},\{1,3\}, \{2,4\}\}$. 
 For the vectors  $p_1=(0,0)^\sfT, p_2=(1,0)^\sfT, p_3=(0,1)^\sfT, p_4=(2,1)^\sfT$, 
  we have that $\det \mcp_{\p}(I)\not=0$ and thus $\pi_{2,G}(\p)\not=0$.
 
 Let $k\ge 3$ and  consider a graph $G=([n],E)$ with $n\ge k+1$ and minimum degree at least $k$. Let   $G\setminus n$ be the graph obtained from  $G$ by  removing  node $n$ and all edges  adjacent to it. Then, $G\setminus n$ has at least $k$ nodes and minimum degree at least $k-1$.  Hence, by the induction hypothesis, the polynomial $\pi_{k-1,G\setminus n}$ is not identically zero. Let   ${\bf p}=\{p_1,\ldots, p_{n-1}\}\subseteq  \mr^{k-1}$ be a generic set of vectors and define ${\bf \tilde p}=\{\tilde p_1,\ldots,\tilde p_n\}\subseteq \oR^k$, where  $\tilde p_i=(p_i^\sfT,0)^\sfT \in \mr^k$ for $1  \le i \le n-1$   and $\tilde p_n=({\bf 0},1)^\sfT \in \mr^k$. 
 As $\bf p$ is generic,      $\pi_{k-1,G\setminus n}({\bf p})\not=0$  and thus   $\det \mcp_{\p}(I)\not=0$ for some subset
$I \subseteq E(G\setminus n)$ with $|I|=\binom{k}{2}$. 
Say, node $n$ is adjacent to the nodes $1,\ldots,k$ in $G$ and define the edge subset $\tilde{I}=I\cup \left\{ \{n,1\},\ldots,\{n,k\}\right\}\subseteq E$. Then,   the matrix   $\mcp_{{\bf \tilde p}}(\tilde{I})$
has the block-form
\begin{equation*}
  \begin{array}{r@{\,}l}
    & 
    \begin{matrix}
      \mspace{15mu}\overbrace{\rule{2cm}{0pt}}^{\binom{k}{2}} &     
      \overbrace{\rule{2cm}{0pt}}^{k}
    \end{matrix}
    \\
    \mcp_{{\bf \tilde p}}(\tilde{I})=  & 
    \left(\begin{matrix}
      & \mcp_{\p}(I) &&* &\ldots &* \\[3pt]
      {\bf 0} & \ldots & {\bf 0} & -p_1& \ldots & -p_k\\
      0 & \ldots & 0  &  1& \ldots &  1
    \end{matrix}\right).
  \end{array}
\end{equation*}
As the vectors $p_1,\ldots,p_{n-1}\in \mr^{k-1}$ were chosen to be  generic, every $k$ of them are affinely independent. This implies that the vectors 
$(-p_1^\sfT,1)^\sfT,\ldots(-p_k^\sfT,1)^\sfT$  are linearly independent. Hence,   $\det \mcp_{{\bf \tilde p}}(\tilde{I})\not=0$ and thus $\pi_{k,G}({\bf \tilde p})\not=0$. \qed
\end{proof}

\begin{theorem}\label{thm:gencon}\cite{Con}
Let $G(\p)$ be a  generic  $d$-dimensional  framework and assume that $G$ has minimum degree at least $d$.
 Then the edge directions of  $G(\p)$ do not lie on a conic at infinity; that is, the system $\{(p_i-p_j)(p_i-p_j)^\sfT: \{i,j\}\in E\}\subseteq \mcs^d$ has full rank $d+1\choose 2$. 
\end{theorem}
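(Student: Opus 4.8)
The plan is to deduce Theorem~\ref{thm:gencon} directly from Lemma~\ref{lem:pol}. The statement that the edge directions do not lie on a conic at infinity is, by definition, the statement that the system $\{(p_i-p_j)(p_i-p_j)^\sfT:\{i,j\}\in E\}\subseteq\mcs^d$ spans a space of dimension $\binom{d+1}{2}$, i.e., that the matrix $\mcp_{\p}$ (with $k=d$) has full row rank $\binom{d+1}{2}$. Equivalently, there is some subset $I\subseteq E$ with $|I|=\binom{d+1}{2}$ such that $\det\mcp_{\p}(I)\neq 0$, which is exactly the condition $\pi_{d,G}(\p)\neq 0$.

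First I would observe that the hypotheses of Theorem~\ref{thm:gencon} match those of Lemma~\ref{lem:pol} with $k=d$: the framework is $d$-dimensional so $n\geq d+1$, and $G$ has minimum degree at least $d$. Hence Lemma~\ref{lem:pol} applies and tells us that $\pi_{d,G}$ is a nonzero polynomial with integer (in particular, rational) coefficients in the $dn$ variables which are the coordinates of $p_1,\ldots,p_n$.

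Next I would invoke genericity: the coordinates of $p_1,\ldots,p_n$ are algebraically independent over $\oR\cap\overline{\mathbb Q}$, or more precisely over $\mathbb Q$, so they cannot be a root of any nonzero polynomial with rational coefficients. Since $\pi_{d,G}$ is such a polynomial, we conclude $\pi_{d,G}(\p)\neq 0$. Therefore $\det\mcp_{\p}(I)\neq 0$ for some $I\subseteq E$ with $|I|=\binom{d+1}{2}$, which means the $\binom{d+1}{2}\times|E|$ matrix $\mcp_{\p}$ has rank $\binom{d+1}{2}$, i.e., the system $\{(p_i-p_j)(p_i-p_j)^\sfT:\{i,j\}\in E\}$ has full rank $\binom{d+1}{2}$. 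This is precisely the assertion that the edge directions do not lie on a conic at infinity.

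There is essentially no obstacle here: all the real work has been front-loaded into Lemma~\ref{lem:pol}, and the theorem is just the translation of ``$\pi_{d,G}$ not identically zero'' plus ``generic configurations avoid the zero set of every nonzero rational polynomial'' into the geometric language of conics at infinity. The only small point to be careful about is matching the normalization/definition of ``conic at infinity'' with the rank condition on the matrix $\mcp_{\p}$; I would state this equivalence explicitly (a conic in the projective space at infinity containing all directions $p_i-p_j$ corresponds exactly to a nonzero symmetric $R\in\mcs^d$ with $(p_i-p_j)^\sfT R(p_i-p_j)=0$ for all $\{i,j\}\in E$, i.e., to a nonzero vector in the kernel of $\mcp_{\p}^\sfT$).
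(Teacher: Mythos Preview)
Your proposal is correct and follows essentially the same approach as the paper: verify that the hypotheses of Lemma~\ref{lem:pol} are met (with $k=d$, using that a $d$-dimensional framework has $n\ge d+1$), then use genericity to conclude $\pi_{d,G}(\p)\ne 0$ since $\pi_{d,G}$ is a nonzero polynomial with integer coefficients, and finally translate this into the rank statement. Your added remarks about why the integer coefficients matter for the genericity argument and about the equivalence with the ``conic at infinity'' language are welcome clarifications but do not change the structure of the proof.
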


\begin{proof}As the framework $G({\bf p})$ is $d$-dimensional, $G$ must have at least $ d+1$ nodes. By Lemma~\ref{lem:pol}, the  polynomial $\pi_{d,G}$ is not identically zero and thus, since $G(\p)$ is generic, we have that $\pi_{d,G}(\p)\not=0$. By definition of $\pi_{d,G} $ there exists $I\subseteq E$ with $|I|=\binom{d+1}{2}$  such that  $\det \mcp_{\p}(I)\not=0$. This implies that the system 
$\{(p_i-p_j)(p_i-p_j)^\sfT: \{i,j\}\in E\}\subseteq \mcs^d$ has full rank $d+1\choose 2$. 
\qed 
\end{proof}
\medskip

Next we  show that for {\em generic}  frameworks Theorem~\ref{thm:con} remains valid even when~\eqref{CC} is omitted.

\begin{corollary}\label{cor:generic} \cite{Con}
Let $G(\p)$ be a generic $d$-dimensional tensegrity  framework. 
Assume that there exists a positive semidefinite equilibrium stress matrix $\Omega$ with  corank $d+1$. Then $G(p)$ is universally rigid. 
\end{corollary}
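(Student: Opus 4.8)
The plan is to reduce Corollary~\ref{cor:generic} to Theorem~\ref{thm:con} by verifying that the only hypothesis of Theorem~\ref{thm:con} not already assumed here — namely the ``no conic at infinity'' condition \eqref{CC} on the stressed edges — holds automatically for generic frameworks. So the first thing I would do is inspect which edges \eqref{CC} actually quantifies over: it ranges over the bars $B$ together with those cables and struts $\{i,j\}\in C\cup S$ for which $\Omega_{ij}\ne 0$. Since $\Omega$ is an equilibrium stress matrix with $\Omega e=0$ and $\Omega P=0$, each row of $\Omega$ gives an affine dependence among the $p_j$'s supported on the closed neighborhood of node $i$; if node $i$ had degree $<d$ this would force an affine dependence among $\le d$ of the generic points $p_j$, which is impossible unless the whole $i$-th row of $\Omega$ vanishes. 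Hence the support graph $G'$ of $\Omega$ (the graph on $[n]$ whose edges are the pairs with $\Omega_{ij}\ne 0$, together with any isolated vertices removed) has minimum degree at least $d$ on its non-isolated part, and its vertex set still affinely spans $\oR^d$ because the kernel of $X_a=P_aP_a^\sfT$ equals $\ran\Omega$ has dimension $n-(d+1)$, so $\Omega$ cannot be supported on fewer than $d+1$ affinely-independent-spanning vertices.

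The second step is to apply Theorem~\ref{thm:gencon} (equivalently Lemma~\ref{lem:pol}) to this support graph $G'$: since $G(\p)$ is generic, the subframework $G'(\p)$ is also generic and $d$-dimensional with minimum degree $\ge d$, so the system $\{(p_i-p_j)(p_i-p_j)^\sfT:\{i,j\}\in E(G')\}$ spans $\mcs^d$, i.e. has full rank $\binom{d+1}{2}$. But the edge set over which \eqref{CC} quantifies contains $E(G')$ (indeed it is exactly $B$ together with the stressed cables/struts, all of which lie in $G'$ except possibly some bars with $\Omega_{ij}=0$ — and adding bars only enlarges the system), so if $R\in\mcs^d$ satisfies $(p_i-p_j)^\sfT R(p_i-p_j)=0=\la (p_i-p_j)(p_i-p_j)^\sfT,R\ra$ for every edge in that set, then $\la M,R\ra=0$ for all $M$ in a spanning set of $\mcs^d$, forcing $R=0$. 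This is precisely condition (iii) of Theorem~\ref{thm:con}.

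Having verified (iii), the conclusion is immediate: hypotheses (i) (positive semidefiniteness of $\Omega$) and (ii) (corank $d+1$) are assumed in the statement of Corollary~\ref{cor:generic}, and a generic $d$-dimensional framework in particular has $p_1,\dots,p_n$ affinely spanning $\oR^d$, so all assumptions of Theorem~\ref{thm:con} are met and $G(\p)$ is universally rigid.

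The one genuinely delicate point — the ``main obstacle'' — is the claim in the first paragraph that restricting to the support of $\Omega$ does not destroy the dimension or degree conditions needed for Theorem~\ref{thm:gencon}. Concretely: one must argue that on each vertex $i$ that is not isolated in $G'$, the neighborhood $\{j:\Omega_{ij}\ne 0\}$ has size $\ge d$. This follows because $\sum_j \Omega_{ij}p_j=0$ and $\sum_j\Omega_{ij}=0$ together say that $p_i$ lies in the affine hull of its $G'$-neighbors; genericity makes any $\le d$ of the $p_j$ affinely independent, so fewer than $d$ neighbors is impossible. A vertex that \emph{is} isolated in $G'$ simply carries no stress and plays no role in \eqref{CC}; deleting it leaves a generic framework on the remaining vertices which still affinely spans $\oR^d$ (the corank computation $\dim\ran\Omega=n-d-1$ on the original vertex set, unchanged by deleting a zero row/column, forces at least $d+1$ surviving vertices, and genericity makes any $d+1$ of them affinely independent). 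Once this bookkeeping is in place the rest is a direct invocation of the earlier results.
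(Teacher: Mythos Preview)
Your proof is correct and follows essentially the same route as the paper: restrict to the support graph of $\Omega$, use the equilibrium conditions together with genericity (general position) to bound its minimum degree from below by $d$, apply Theorem~\ref{thm:gencon} to verify condition~\eqref{CC}, and then conclude via Theorem~\ref{thm:con}. The only difference is your explicit treatment of possible isolated vertices in the support graph, which the paper simply glosses over; in fact $\Ker\Omega=\ran P_a$ combined with genericity (for $n\ge d+2$) rules such vertices out, so your extra bookkeeping, while careful, is not strictly needed.
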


\begin{proof}Set $E_0=\{\{i,j\}\in E: \Omega_{ij}\ne 0\}$ and 
define the subgraph $G_0=([n],E_0)$ of $G$. 
First we show that $G_0$ has minimum degree at least $d$.
For this, we use the equilibrium conditions: For all $i\in [n]$, 
$\sum_{j: \{i,j\}\in E_0} \Omega_{ij} p_j=0$, which give an affine dependency among the vectors $p_i$ and $p_j$ for $\{i,j\}\in E_0$.
 By assumption,  $\mathbf p$ is generic and thus in general position, which implies that any $d+1$ of the vectors $p_1,\ldots,p_n$ are affinely dependent. From this we deduce that each node $i\in [n]$ has degree at least $d$ in $G_0$.
 
 Hence we can apply Theorem \ref{thm:gencon} to the generic framework $G_0(\mathbf p)$ and conclude that the system
 $\{(p_i-p_j)(p_i-p_j)^\sfT: \{i,j\}\in E_0\}$ has full rank $d+1\choose 2$. This shows that the condition (\ref{CC}) holds. Now we can apply Theorem \ref{thm:con} to $G(\mathbf p)$ and conclude that $G(\mathbf p)$ is universally rigid.
\qed
\end{proof}
\medskip

We note that for bar frameworks this fact has been also   obtained  independently  by A. Alfakih using the related concepts of {\em dimensional rigidity}  and {\em Gale matrices}.  The notion of dimensional rigidity was introduced in~\cite{A07} where a sufficient condition was obtained  for showing that a  framework  is dimensionally rigid.  In~\cite{A10},  using the concept of a Gale matrix, this condition was shown to be equivalent to the sufficient condition from Theorem~\ref{thm:con} (for bar frameworks). Lastly, in~\cite{A10} it is shown that for generic frameworks the notions of dimensional rigidity and universal rigidity coincide.


In the special case of bar frameworks, the converse of Corollary~\ref{cor:generic} was proved recently by  S.J. Gortler and P. Thurston. 

\begin{theorem}\cite{GT} \label{thmGT}
Let $G(\p)$ be a  generic $d$-dimensional bar framework and assume that it is   universally rigid. Then  there exists a positive semidefinite equilibrium stress matrix $\Omega$ for $G(\p)$ with corank $d+1$.
\end{theorem}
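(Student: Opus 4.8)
The statement to prove is Theorem~\ref{thmGT}, the Gortler--Thurston converse: a generic $d$-dimensional bar framework that is universally rigid admits a psd equilibrium stress matrix of corank $d+1$. This is a deep result, so my plan is to outline the architecture of their proof rather than to reconstruct every estimate.

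\medskip

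The plan is to argue contrapositively through an intermediate notion, \emph{dimensional rigidity}. First I would recall that, via the SDP picture set up just above (programs~\eqref{primalGp}--\eqref{dualGp}), the Gram matrix $X_a=P_aP_a^\sfT$ is primal feasible, and the primal feasible region consists exactly of the Gram matrices of frameworks $G(\mathbf q)$ dominated by (here: equivalent to) $G(\mathbf p)$. Universal rigidity says $X_a$ is, up to the translational ambiguity, the \emph{only} such Gram matrix, so in particular $G(\mathbf p)$ has no equivalent framework of dimension $>d$; i.e., it is dimensionally rigid. The key reduction is then: for generic frameworks, \emph{dimensional rigidity is equivalent to the existence of a psd stress matrix of corank $d+1$}. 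One direction is Corollary~\ref{cor:generic} (a psd corank-$(d+1)$ stress forces universal, hence dimensional, rigidity). For the other direction I would invoke the face structure of the psd cone: the set of Gram matrices of frameworks equivalent to $G(\mathbf p)$ is a spectrahedron $\mathcal{F}$; if no psd stress of corank $d+1$ exists, I want to produce an equivalent framework of strictly larger dimension, contradicting dimensional rigidity. A psd stress $\Omega$ is precisely a dual-feasible matrix with $\Omega X_a=0$, equivalently an element of the normal cone face; the maximum corank of such $\Omega$ controls the minimal face of $\mathcal{S}^n_+$ containing $\mathcal{F}$, and if that maximal corank is $\le d$ then $\mathcal{F}$ meets the relative interior of a larger face, yielding a higher-rank (hence higher-dimensional) equivalent Gram matrix. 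Making this precise is the heart of Alfakih's Gale-matrix analysis.

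\medskip

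The genuinely hard part — and the reason this is a theorem and not an exercise — is establishing dimensional rigidity $\Rightarrow$ universal rigidity for generic frameworks, i.e.\ that a generic framework with no higher-dimensional equivalent realization actually has no \emph{nearby} lower-dimensional equivalent realization either. This is where Gortler--Thurston bring in real algebraic geometry and the theory of the \emph{measurement variety}: the map sending a configuration to its vector of squared edge lengths has, at a generic point, a well-defined behavior governed by the rank of the rigidity matrix, and one shows that if there were a continuous or discrete family of equivalent frameworks of dimension $\le d$, genericity (algebraic independence of the coordinates over $\mathbb Q$) would force this family to persist in dimension $d+1$ as well, contradicting dimensional rigidity. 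I would treat this algebraic-geometric core as the main obstacle and, in a self-contained write-up, either cite \cite{GT} and \cite{A10} directly for it or sketch the measurement-variety argument: the equivalence class of $G(\mathbf p)$ under the edge-length map is an algebraic set defined over $\mathbb Q$, generic points of which all have the same local dimension, so the existence of an equivalent framework that genuinely increases dimension can be detected algebraically and its non-existence propagates.

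\medskip

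Concretely, the step order I would follow is: (1) record that universal rigidity implies dimensional rigidity, using the SDP feasible-region description; (2) cite/prove, via the measurement variety and genericity, that for generic frameworks dimensional rigidity is equivalent to universal rigidity (\cite{GT}, \cite{A10}); (3) translate dimensional rigidity into a statement about the minimal face of $\mathcal{S}^n_+$ containing the spectrahedron $\mathcal{F}$ of equivalent Gram matrices — namely that $X_a$ lies in the relative interior of $\mathcal{F}$ and $\mathcal{F}$ spans that minimal face; (4) observe that a psd equilibrium stress matrix for $G(\mathbf p)$ is exactly a dual-optimal $\Omega\succeq 0$ with $\Omega X_a=0$, and that the \emph{maximum} possible corank of such an $\Omega$ equals $n$ minus the rank of the minimal face containing $\mathcal{F}$; (5) conclude that dimensional rigidity forces this minimal face to have rank exactly $d+1$, hence there is a psd stress of corank $d+1$. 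The only place where I expect to have to either invoke a black box or work hard is step (2); everything else is convex geometry of the psd cone already developed in Section~\ref{secSDP}.
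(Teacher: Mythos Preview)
The paper does not prove Theorem~\ref{thmGT}; it is stated without proof and attributed to Gortler--Thurston~\cite{GT}. There is therefore no ``paper's own proof'' to compare your proposal against.

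That said, your outline has a genuine logical misplacement. The direction you must prove is
\[
\text{universally rigid} \ \Longrightarrow\ \text{dimensionally rigid} \ \Longrightarrow\ \text{psd stress of corank } d+1.
\]
The first arrow is trivial (your step~1). Your step~2, the equivalence ``dimensionally rigid $\Longleftrightarrow$ universally rigid for generic $\mathbf p$'', is simply not needed in this direction: you already \emph{have} universal rigidity as a hypothesis. So the step you label as ``the genuinely hard part'' plays no role in the argument you actually need.

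The real gap is in your step~4. Your assertion that ``the maximum possible corank of such an $\Omega$ equals $n$ minus the rank of the minimal face containing $\mathcal{F}$'' is, first, mis-stated (you want the \emph{minimum} corank, equivalently the \emph{maximum} rank, of a dual-optimal $\Omega$); and second, once corrected, it is exactly the statement that the SDP pair \eqref{primalGp}--\eqref{dualGp} satisfies \emph{strict complementarity}. Nothing in Section~\ref{secSDP} proves strict complementarity: there it is always a \emph{hypothesis} (Lemma~\ref{lem:per}, Theorems~\ref{thm:converse} and~\ref{main}), never a conclusion. Dimensional rigidity gives you that the maximal primal rank is $d+1$, i.e.\ $X_a\in\text{relint}(\mathcal F)$; it does \emph{not} by itself hand you a dual optimal $\Omega$ of rank $n-d-1$. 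Producing that maximal-rank stress---equivalently, showing that the subspace $\{W\in\mcs^{n-d-1}: u_i^\sfT W u_j=0 \text{ for } \{i,j\}\in\overline E\}$ (in Gale coordinates) meets the positive-definite cone---is precisely where genericity and the algebraic-geometric machinery of~\cite{GT} enter. So the black box you need from~\cite{GT} sits inside your steps~4--5, not in your step~2, and your claim that steps~3--5 are ``convex geometry of the psd cone already developed in Section~\ref{secSDP}'' is where the proposal breaks down.
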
 

\subsection{Connections with unique completability}\label{sec:connections}

 In this section we investigate   the  links  between the two notions of universally completable   and universally rigid tensegrity frameworks.  We start the discussion with defining the suspension of a tensegrity framework.

\begin{definition}\label{defsuspension}
Let   $G=(V=[n],E)$ be a tensegrity graph with $E=B\cup C \cup S$.  We denote by  $\nabla G=(V\cup \{0\},E')$ its {\em suspension tensegrity  graph}, with $E'=B'\cup C'\cup S'$ where $B'=B\cup\{\{0,i\} : i\in [n]\}$, $C'=S$ and $S'=C$.
Given a  tensegrity framework $G(\p)$,  
we define the {\em extended tensegrity framework} 
$\nabla G(\widehat {\bf p})$ where $\widehat{p}_i=p_i$ for all $i \in [n]$ and $\widehat{p}_0={\bf 0}$.
\end{definition}

Our first observation is a correspondence between the universal completability  of a tensegrity framework  $G({\bf p})$ and 
the universal rigidity of its extended tensegrity framework  $\nabla G(\widehat{\bf p})$. The analogous  observation in the setting of global rigidity  has been also made  in~\cite{CW} and~\cite{SC}. 

\begin{lemma}\label{lem:connection}
 Let   
 $G(\bf p)$ be a tensegrity framework and let $\nabla G(\widehat \bf p)$ be its extended tensegrity framework as defined in Definition \ref{defsuspension}.
Then, the  tensegrity framework $G({\bf p})$ is universally completable if and only if the extended tensegrity  framework $\nabla G(\widehat{\bf p})$ is universally rigid.
\end{lemma}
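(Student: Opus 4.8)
The plan is to establish the equivalence by comparing the feasible sets of the two associated semidefinite programs, namely $(\mcp_G)$ for $G(\mathbf p)$ and the analogue of $(\ref{primalGp})$ for the extended framework $\nabla G(\widehat{\mathbf p})$. The key bridge is the classical observation linking inner products and squared Euclidean distances through a base point placed at the origin: if $\widehat q_0 = \mathbf 0$ and $\widehat q_i = q_i$, then $\|\widehat q_0 - \widehat q_i\|^2 = \|q_i\|^2 = q_i^\sfT q_i$ and $\|\widehat q_i - \widehat q_j\|^2 = q_i^\sfT q_i + q_j^\sfT q_j - 2 q_i^\sfT q_j$. Concretely, I would first recall that $G(\mathbf p)$ is universally completable iff $\gram(p_1,\dots,p_n)$ is the unique feasible solution of $(\mcp_G)$, and that (by the discussion after Definition~\ref{defstress} and the analogue of the proof of Theorem~\ref{thm:con}) $\nabla G(\widehat{\mathbf p})$ is universally rigid iff every framework $\nabla G(\widehat{\mathbf q})$ dominated by it is congruent to it, which by translating the base point to the origin amounts to: every psd completion pattern arising from such a dominated framework produces the same Euclidean distances, equivalently the same Gram matrix on $V \cup \{0\}$.

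The core of the argument is a bijection between the solution sets. First I would show the forward direction: suppose $G(\mathbf p)$ is universally completable, and let $\nabla G(\widehat{\mathbf q})$ be any tensegrity framework dominated by $\nabla G(\widehat{\mathbf p})$. By Definition~\ref{defsuspension}, the bar edges $\{0,i\}$ force $\|\widehat q_i\| = \|\widehat q_0 - \widehat q_i\| = \|p_i\|$, hence $q_i^\sfT q_i = p_i^\sfT p_i$ for all $i$; the cable/strut swap ($C' = S$, $S' = C$) together with the distance inequalities on edges $\{i,j\}\in E$ translates, after expanding $\|\widehat q_i - \widehat q_j\|^2$ and using $q_i^\sfT q_i = p_i^\sfT p_i$, precisely into the inner-product inequalities defining $(\mcp_G)$: $q_i^\sfT q_j \le p_i^\sfT p_j$ on $C$, $q_i^\sfT q_j \ge p_i^\sfT p_j$ on $S$, equality on $B$ and on $V$. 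So $\gram(q_1,\dots,q_n)$ is feasible for $(\mcp_G)$, hence equals $\gram(p_1,\dots,p_n)$ by universal completability; combined with $\widehat q_0 = \mathbf 0 = \widehat p_0$ this gives $\gram(\widehat q_0,\dots,\widehat q_n) = \gram(\widehat p_0,\dots,\widehat p_n)$, so the two extended frameworks are congruent (indeed related by an orthogonal transformation fixing the origin). For the converse, I would reverse this dictionary: given any $Y = \gram(q_1,\dots,q_n)$ feasible for $(\mcp_G)$, set $\widehat q_0 = \mathbf 0$, $\widehat q_i = q_i$, and check that $\nabla G(\widehat{\mathbf q})$ is dominated by $\nabla G(\widehat{\mathbf p})$; universal rigidity of the latter then forces congruence, hence $\gram(q_i^\sfT q_j) = \gram(p_i^\sfT p_j)$ on all pairs, in particular $Y = \gram(p_1,\dots,p_n)$, giving universal completability.

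The one technical point requiring care is the congruence-versus-orthogonal-equivalence issue. A congruence of frameworks in Euclidean space is a rigid motion (orthogonal transformation plus translation), whereas equality of Gram matrices corresponds to an orthogonal transformation fixing the origin; these must be reconciled by noting that once the base node is pinned at the origin in both frameworks, any congruence between them must fix the origin and hence is a pure orthogonal map, which is exactly what preserves all inner products. I expect this bookkeeping about the base point, rather than any genuine mathematical difficulty, to be the main obstacle — it is really a matter of carefully threading the definitions of domination, congruence, and the cable/strut exchange in $\nabla G$ so that the inequality directions line up correctly. The rest is the routine algebraic identity $\|\widehat q_i - \widehat q_j\|^2 = q_i^\sfT q_i + q_j^\sfT q_j - 2 q_i^\sfT q_j$ applied edge by edge.
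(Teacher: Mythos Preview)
Your proposal is correct and follows essentially the same approach as the paper's proof: both establish the equivalence by showing that the feasibility conditions of $(\mcp_G)$ for $\gram(q_1,\ldots,q_n)$ are equivalent to the domination conditions for $\nabla G(\widehat{\mathbf q})$ (with $\widehat q_0=\mathbf 0$) relative to $\nabla G(\widehat{\mathbf p})$, via the identity $\|q_i-q_j\|^2 = q_i^\sfT q_i + q_j^\sfT q_j - 2q_i^\sfT q_j$ and the cable/strut swap in Definition~\ref{defsuspension}. The paper's proof is terser and leaves the translation-to-origin and congruence bookkeeping implicit, whereas you spell these out; this is a difference in presentation, not in strategy.
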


\begin{proof}Notice that for any family of vectors  $q_1,\ldots,q_n$, their Gram matrix  satisfies the conditions:
\begin{align*}
\la E_{ij},X\ra &= p_i^\sfT p_j \ \text{ for all } \{i,j\}\in V\cup B,\\
\la E_{ij},X\ra &\le  p_i^\sfT p_j \ \text{ for all } \{i,j\} \in C,\\
\la E_{ij},X\ra &\ge p_i^\sfT p_j \ \text{ for all } \{i,j\} \in S,
\end{align*}
if and only if the Gram matrix  of $q_0={\bf 0}, q_1,\ldots,q_n$ satisfies:
\begin{align*}
\la F_{ij},X \ra &=\|p_i-p_j\|^2 \ \text{ for all } \{i,j\}\in B',\\
\la F_{ij},X \ra &\le\|p_i-p_j\|^2 \ \text{ for all } \{i,j\}\in C',\\
\la F_{ij},X \ra &\ge\|p_i-p_j\|^2 \ \text{ for all } \{i,j\}\in S',
\end{align*}
which implies the claim. 
\end{proof}\qed
\medskip

In view of Lemma~\ref{lem:connection} it is reasonable to ask whether Theorem~\ref{thm:sphcorp} can be derived from Theorem~\ref{thm:con} applied to the tensegrity framework $\nabla G(\widehat{\p})$. We will show that this  is the case for bar frameworks, i.e., when $C=S=\emptyset$. Indeed, for  a bar framework,  the condition (\ref{CC})  from Theorem~\ref{thm:con} applied to the suspension tensegrity  framework $\nabla G(\widehat{\p})$ becomes
 $$ R \in \mcs^d, \ (p_i-p_j)^\sfT R\  (p_i-p_j)=0 \text{ for all } \{i,j\} \in E \cup \{\{0,i\}: i\in [n]\} \Longrightarrow R=0,$$
and, as   $\widehat{p}_0=0$, this   coincides  with the condition \eqref{eq:sphconic}.

\smallskip
The following  lemma shows  that   for bar frameworks there exists  a one to one correspondence  between equilibrium stress matrices for $\nabla G(\widehat{\p})$ and spherical stress matrices for $G(\p)$. The crucial fact that we use here is   that for bar frameworks there are no sign conditions for a spherical  stress matrix for $G(\p)$  or for an equilibrium stress matrix for $\nabla G(\widehat{\p})$.

\begin{lemma}\label{lem:conststress}Let $G({\bf p})$ be a bar framework in $\oR^d$ such that $p_1,\ldots,p_n$ span linearly $\oR^d$.
The following assertions are equivalent:
\begin{itemize}
\item[(i)] There exists an equilibrium stress matrix $\Omega\in \mcs^{n+1}_+$ for the framework $\nabla G(\widehat{\bf p})$ with $\cor \Omega=d+1$.
\item[(ii)]
There exists a spherical stress matrix for $G(\p)$. 
\end{itemize}
\end{lemma}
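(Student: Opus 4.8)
The plan is to prove the two implications separately by the standard ``bordering with the all-ones vector'' which converts a Gram-type stress matrix on $G$ into a Euclidean-type stress matrix on $\nabla G$ and back. Write $P\in\oR^{n\times d}$ for the matrix with rows $p_1^\sfT,\dots,p_n^\sfT$, so $\rankspace P=d$. Since we are in the bar case ($C=S=\emptyset$, and hence also $C'=S'=\emptyset$ for $\nabla G$) there are no sign conditions: a spherical stress matrix for $G(\mathbf p)$ is a matrix $Z\in\mcs^n$ with $Z\succeq 0$, $Z_{ij}=0$ for all $\{i,j\}\in\overline E$, $ZP=0$ and $\cor Z=d$ (conditions (i),(ii),(iv),(v) of Theorem~\ref{thm:sphcorp}); and an equilibrium stress matrix $\Omega\in\mcs^{n+1}_+$ for $\nabla G(\widehat{\bf p})$ with $\cor\Omega=d+1$ is a matrix $\Omega\in\mcs^{n+1}$ with $\Omega\succeq 0$, $\Omega_{ij}=0$ for all $\{i,j\}\in\overline E$ (the non-edges of $\nabla G$ are exactly those of $G$, and no condition is imposed at node $0$ since $\{0,i\}\in E'$ for all $i$), $\Omega\mathbf 1_{n+1}=0$, $\Omega\widehat P=0$ with $\widehat P=\begin{pmatrix}\mathbf 0^\sfT\\ P\end{pmatrix}$ the configuration matrix of $\nabla G(\widehat{\bf p})$, and $\cor\Omega=d+1$. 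Put $N=\begin{pmatrix}-\mathbf 1_n^\sfT\\ I_n\end{pmatrix}\in\oR^{(n+1)\times n}$, which has full column rank $n$, and $\widehat P_a=\begin{pmatrix}\mathbf 0^\sfT & 1\\ P & \mathbf 1_n\end{pmatrix}\in\oR^{(n+1)\times(d+1)}$, which has rank $d+1$; note that $\Omega\mathbf 1_{n+1}=0$ and $\Omega\widehat P=0$ together amount to $\Omega\widehat P_a=0$.

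For $(ii)\Rightarrow(i)$ I would simply take $\Omega:=NZN^\sfT$. Then $\Omega\succeq 0$; its principal submatrix indexed by $[n]$ is $Z$, so the support condition is inherited and nothing is constrained at node $0$; from $N^\sfT\mathbf 1_{n+1}=0$ and $N^\sfT\widehat P=P$ one gets $\Omega\mathbf 1_{n+1}=0$ and $\Omega\widehat P=N(ZP)=0$; and since $N$ has full column rank and $ZN^\sfT$ has the same column space as $Z$, $\rankspace\Omega=\rankspace(ZN^\sfT)=\rankspace Z=n-d$, i.e.\ $\cor\Omega=d+1$. So $\Omega$ is the desired equilibrium stress matrix; this direction is a one-line block computation.

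For $(i)\Rightarrow(ii)$ I would take $Z$ to be the principal submatrix of $\Omega$ obtained by deleting row and column $0$, so $\Omega=\begin{pmatrix}\Omega_{00}&\omega^\sfT\\ \omega&Z\end{pmatrix}$. Then $Z\succeq 0$ and $Z_{ij}=0$ on $\overline E$ are immediate, and $\Omega\widehat P=0$ gives $ZP=0$ (and $\omega^\sfT P=0$). The one point that is not completely routine is showing $\cor Z=d$: since $ZP=0$ and $\rankspace P=d$ we have $\cor Z\ge d$, so it remains to exclude $\cor Z=d+1$. I would do this using the equilibrium equation $\Omega\mathbf 1_{n+1}=0$, which forces $\omega=-Z\mathbf 1_n$; then every $v\in\Ker Z$ satisfies $\omega^\sfT v=-\mathbf 1_n^\sfT Zv=0$, hence $\begin{pmatrix}0\\ v\end{pmatrix}\in\Ker\Omega$. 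But $\Omega\widehat P_a=0$ together with $\rankspace\widehat P_a=d+1=\cor\Omega$ forces $\Ker\Omega=\ran\widehat P_a$, and a vector $\widehat P_a\begin{pmatrix}a\\ t\end{pmatrix}=\begin{pmatrix}t\\ Pa+t\mathbf 1_n\end{pmatrix}$ has vanishing $0$-th coordinate only when $t=0$; thus the elements of $\ran\widehat P_a$ with zero first entry form the $d$-dimensional space $\{\begin{pmatrix}0\\ Pa\end{pmatrix}:a\in\oR^d\}$. Consequently the injection $v\mapsto\begin{pmatrix}0\\ v\end{pmatrix}$ maps $\Ker Z$ into a space of dimension $d$, so $\cor Z\le d$, and therefore $\cor Z=d$. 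Hence $Z$ is a spherical stress matrix for $G(\mathbf p)$.

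I expect this last corank count to be the only genuine obstacle: passing to a principal submatrix of the psd matrix $\Omega$ could a priori lower the rank, and one really has to exploit the equilibrium relation $\Omega\mathbf 1_{n+1}=0$ together with strict complementarity (here $\cor\Omega=d+1$, equivalently $\Ker\Omega=\ran\widehat P_a$) to pin $\cor Z$ down to $d$ rather than $d+1$. Everything else reduces to reading off the block shapes of $N$ and $\widehat P_a$.
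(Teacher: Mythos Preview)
Your proof is correct and follows essentially the same route as the paper: take the principal submatrix of $\Omega$ in one direction, and border $Z$ by $-Ze$ and $e^\sfT Ze$ in the other. Your formulation $\Omega=NZN^\sfT$ with $N=\begin{pmatrix}-\mathbf 1_n^\sfT\\ I_n\end{pmatrix}$ is exactly the paper's construction written as a congruence, which has the small advantage of making $\Omega\succeq 0$ and the rank equality $\rankspace\Omega=\rankspace Z$ immediate. For $(i)\Rightarrow(ii)$ the paper simply asserts the equivalence $\Ker\Omega=\ran\widehat P_a\iff\Ker Z=\ran P$ and the corank equality without detail; your argument via the injection $v\mapsto\begin{pmatrix}0\\v\end{pmatrix}$ into $\Ker\Omega$ and the identification of the zero-first-coordinate slice of $\ran\widehat P_a$ is precisely the verification the paper leaves to the reader, and you have correctly identified this corank count as the only nontrivial step.
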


\begin{proof}Let $P\in \oR^{n\times d}$ be the configuration matrix of the framework $G(\p)$ 
and let 
 $\widehat P_a =\left(\begin{matrix} {\bf 0} & 1 \cr P & e \end{matrix}\right)$.
Write a matrix $\Omega\in \mcs^{n+1}_+$ in block-form as
\begin{equation}\label{eqOmega}
\Omega =\left(\begin{matrix} w_{0} & w^\sfT\cr w & Z\end{matrix}\right) \ \text{ where } Z\in \mcs^n_+, w\in \oR^n, w_{0}\in \oR.
\end{equation}
Notice that $\Omega$ is supported by $\nabla G$ precisely when $Z$ is supported by~$G$. The matrix $\Omega$ is a stress matrix for $\nabla G(\widehat{\bf p})$ if and only if $\Omega \widehat P_a=0$ which is equivalent to
\begin{equation}\label{eqsystem}
ZP=0,\ w = -Ze,\ w_{0}=-w^\sfT e.
\end{equation}
Moreover, $\Ker \Omega = \ran \widehat {P_a}$ if and only if $\Ker Z= \ran P$, so that $\corank \Omega =d+1$ if and only if $\corank Z=d$.
The lemma now follows easily: If $\Omega$ satisfies (i), then its principal submatrix $Z$ satisfies (ii).
Conversely, if $Z$ satisfies (ii), then the matrix $\Omega$ defined by (\ref{eqOmega}) and (\ref{eqsystem}) satisfies (i).
\qed\end{proof}
\medskip

Summarizing,   we have established that in the special case of a bar framework $G(\p)$ (i.e., $C=S=\emptyset$),   Theorem~\ref{thm:sphcorp} is equivalent to Theorem~\ref{thm:con} applied to the extended bar framework $\nabla G(\widehat{\p})$. It is not clear 
whether this equivalence remains valid  for arbitrary tensegrity frameworks. To deal with such frameworks,   Lemma~\ref{lem:conststress} has to be generalized so as to  accommodate the sign conditions for the sperical stress matrix and the equilibrium stress matrix for $G(\p)$ and $\nabla G(\widehat{\p})$, respectively.

\if 0 
\subsection{The  sufficient condition}
\fi 

\if 0 
\subsection{A stronger sufficient condition for bar frameworks }

In this section we show that Theorem~\ref{thm:sphcorp} remains valid even with a  strengthened  version of equation~\eqref{eq:sphconic}. This fact cannot be  proved by modifying  the arguments in Theorem~\ref{thm:sphcorp}.  Instead the proof relies on semidefinite programming duality.

 Given a bar  framework $G(\p)$ consider the following pair of primal-dual pair of semidefinite programs: 
 
  \begin{equation}\tag{$P_G$}\label{eq:sdp2}
 \begin{array}{ll}
 \underset{X}{\sup}\ \{0: X\succeq 0, & \la E_{ij},X \ra=p_i^{\sfT}p_j \  \text{ for } \{i,j\} \in V\cup B.\}\\
\end{array}
\end{equation}
\begin{equation}\tag{$D_G$}\label{eq:sdp2d}
\begin{array}{ll}
\underset{z,Z}{\inf}\ \{ \sum_{ij \in E}z_{ij}p_i^{\sfT}p_j : & Z=\sum_{ij \in V\cup E}z_{ij}E_{ij}\succeq 0\}\\
\end{array}
\end{equation}

\medskip 
Clearly the matrix  ${\rm Gram}(p_1,\ldots,p_n)$ is an optimal solution of (\ref{eq:sdp2}). Hence,  the tensegrity framework $G(\p)$ is universally completable  
if and only if    (\ref{eq:sdp2}) has  a unique optimal solution. 
As a direct application of Theorem~\ref{main}   applied to the  programs \eqref{eq:sdp2} and \eqref{eq:sdp2d},  we obtain the following  sufficient condition for existence of a unique psd completion.


\begin{theorem} \label{thm:sphcorp}
Let $G({\bf p})$ be a tensegrity framework in $\oR^d$ such that $p_1,\ldots,p_n$ span linearly $\oR^d$ which   additionally satisfies the following condition:
\begin{equation}\label{eq:sphconic2}
R\in \mcs^d,\  p_i^\sfT R \ p_j=0\ \text{ for all } \{ i,j\} \in V\cup E \Longrightarrow  R=0.
 \end{equation}
Assume  that there exists a matrix $Z\in \mcs^n$ satisfying 
\begin{itemize}
\item[(i)] $Z$ is positive semidefinite.
\item[(ii)]
$Z_{ij}=0$ for all $\{i,j\}\in \overline E$. 
\item[(iii)] $Z_{ij}\ge 0 \ \forall \{i,j\}\in C$ and $Z_{ij}\le 0 \ \forall \{i,j\}\in S$.
\item[(iv)] $\sum_{j=1}^n Z_{ij}p_j=0$ for all $i\in [n]$ (i.e., $ZPP^\sfT=0$).
\item[(v)] $Z$ has corank $d$.
\end{itemize}
Then the framework $G({\bf p})$ is universally  completable.
\end{theorem}

\begin{proof}Consider the pair of primal-dual semidefinite programs~\eqref{eq:sdp2} and~\eqref{eq:sdp2d} and let $X={\rm Gram}(p_1,\ldots,p_n)$ which is primal optimal. As~\eqref{eq:sdp2d} is strictly feasible and since~\eqref{eq:sdp2} is feasible it follows that there exists  no duality gap.  Condition   \eqref{eq:sphconic2} is equivalent to the fact that $X$ is an extreme point of the feasible region of~\eqref{eq:sdp2}. 
 Moreover the assumptions $(i)-(iii)$ imply that $Z$ is feasible for~\eqref{eq:sdp2d} and assumption $(iv)$ that it is dual optimal.  Lastly, by $(v)$ it  follows   that $Z$  is an optimal solution for \eqref{eq:sdp2d} which is strictly complementary to $X$.
Then Theorem~\ref{main} applies and we get that $X$ is the unique optimal solution of~\eqref{eq:sdp2}.
\qed\end{proof}
\fi

\section{The Strong Arnold Property and graph parameters}\label{sec:parameters}

In this section we   revisit  the Strong Arnold Property (SAP) and we show that matrices fulfilling the SAP posses some nice geometric properties. We also  show that psd matrices fulfilling the SAP can be characterized as nondegenerate solutions of some appropriate semidefinite program. Additionally,  we  investigate  the  relation between the graph parameters $\nu^=(\cdot)$ and $\gd(\cdot)$, introduced in \cite{H96,H03} and \cite{LV,LV12}, respectively. 

\subsection{The Strong Arnold Property}\label{sec:sap} 

For a graph   $G=(V=[n],E)$   consider the linear space 
$$\mcc(G)=\{ X\in \mcs^n : \la E_{ij},X\ra=0\  \ \forall \{i,j\} \in \E\}.$$


\begin{definition}For a graph $G=([n],E)$,  a matrix    $M\in \mcc(G)$ is said to satisfy  the {\em Strong Arnold Property  (SAP) } if 
\begin{equation}\label{def:sap} 
\mathcal{T}_M+\lin\{ E_{ij}: \{i,j\} \in V\cup E\}=\mcs^n.
\end{equation}
\end{definition}

The SAP  has received a significant  amount  of attention due to its connection to the Colin de Verdi\`ere graph parameter $\mu(\cdot)$, introduced and studied in \cite{CdV90}.
The {\em Colin de Verdi\`ere number}  $\mu(G)$ of a graph $G$ is defined as the maximum corank of a matrix $M \in \mcc(G)$ satisfying: 
 $\la E_{ij},M\ra<0$ for all $\{i,j\} \in E$, $M$ has exactly one negative eigenvalue,  and $M$ satisfies the SAP.  The graph parameter $\mu(\cdot)$  
 is   minor monotone, and it turns out that the SAP plays a crucial role for showing this. The importance of the graph  parameter $\mu(G)$ stems in particular   from the fact that it permits to characterize several topological properties  of graphs. For instance, it is known that $\mu(G)\le 3$ if and only if  $ G$ is planar \cite{CdV90}  and $\mu(G)\le 4$ if and only of  $G$ is linklessly embeddable \cite{LS98} (more details can be found  e.g. in \cite{HLS99} and further references therein). 

By  taking orthogonal complements in~\eqref{def:sap}  and using \eqref{eq:normal2},  
we arrive at  the following  equivalent expression for the  SAP, that we will use in the sequel:
 \begin{equation}\label{eq:sap2}
 X\in \mcs^n,\  MX=0,\  X_{ij}=0 \text{ for all }  \{i,j\}  \in V\cup E \Longrightarrow  X=0.
 \end{equation}

\if 0 
\begin{lemma}Let $G=([n],E)$ and consider the semidefinite program $SDP(G):$

\begin{equation*}
\begin{aligned}
& \text{minimize} & & 0 \\
& \text{subject to} & & \la E_{ij},\Omega \ra=0  \text{ for } ij\not \in E\\
& && \Omega \psd 0
\end{aligned}
\end{equation*}

Then $\Omega$ has the SAP for $G$ iff $\Omega$ is a nondegenerate feasible solution for $SDP(G)$.

\end{lemma}
\fi 

Our next goal is to give a geometric characterization of matrices satisfying the SAP using the notion of null space representations. Consider a matrix $M\in \mcs^{n}$,   fix an arbitrary basis for $\Ker M$ and form the  $  n \times \cor M$ matrix that has as  columns the basis elements. The vectors corresponding to the rows of the resulting matrix form a  {\em nullspace representation} of $M$. If we impose structure on $M$ in terms of some graph $G$, nullspace  representations of $M$  exhibit  intriguing geometric  properties and have been extensively studied (see e.g.~\cite{LS99}).  

The next theorem shows that null space representations of matrices satisfying the SAP enjoy  some nice geometric properties. The equivalence between and  the first  and the third item item  has been rediscovered independently by~\cite{vdH97} (Theorem 4.2) and \cite{G} (Lemma~3.1).

\begin{theorem}\label{thm:godsil}
Consider a graph $G=([n],E)$ and a matrix    $M \in \mcc(G)$ with $\cor M=d$.
Let $P\in \mr^{n\times d}$  be a  matrix whose columns form an orthonormal  basis for  $\Ker M$ and  let $\{p_1,\ldots,p_n\}$ denote the row vectors of $P$.  The following  assertions are equivalent: 
\begin{itemize}
\item[(i)] $M$ satisfies the Strong Arnold Property.
\item[(ii)] $PP^\sfT$ is an extreme point of the spectrahedron 
$$ \{X \psd 0: \la E_{ij},X\ra=p_i^\sfT p_j \text{ \rm  for } \{i,j\} \in V\cup E \}.
 $$
\item[(iii)] For any  matrix $R\in \mcs^d$ the following holds:
$$ p_i^\sfT Rp_j=0 \text{ \rm  for all } \{i,j\} \in V\cup E \Longrightarrow  R=0.$$
\end{itemize}
\end{theorem}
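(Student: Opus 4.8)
The plan is to deduce Theorem~\ref{thm:godsil} from the machinery already assembled, in particular Corollary~\ref{thm:conic} and the reformulation \eqref{eq:sap2} of the SAP. The three items should be proved in a cycle, or more conveniently by establishing $(ii)\Longleftrightarrow(iii)$ first and then $(i)\Longleftrightarrow(iii)$. For the equivalence $(ii)\Longleftrightarrow(iii)$ I would simply invoke Corollary~\ref{thm:conic}: the spectrahedron in $(ii)$ is exactly a feasible region $\mcp$ of the form \eqref{specP} with $J=\emptyset$, equality constraints indexed by $\{i,j\}\in V\cup E$, and constraint matrices $A_{ij}=E_{ij}$; writing $PP^\sfT=PP^\sfT$ with $P\in\oR^{n\times d}$ of rank $d$ (the columns being an orthonormal, hence linearly independent, basis of $\Ker M$), condition (ii) of Corollary~\ref{thm:conic} becomes ``$R\in\mcs^d$, $\la P^\sfT E_{ij}P,R\ra=0$ for all $\{i,j\}\in V\cup E$ $\Rightarrow R=0$''. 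Since $\la P^\sfT E_{ij}P,R\ra=\la E_{ij},PRP^\sfT\ra=p_i^\sfT R p_j$ (using $E_{ij}=(e_ie_j^\sfT+e_je_i^\sfT)/2$ and symmetry of $R$), this is precisely the implication in $(iii)$. So $(ii)\Leftrightarrow(iii)$ is immediate.

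For $(i)\Longleftrightarrow(iii)$ I would argue through the kernel description of $M$. Write the spectral decomposition $M=Q\,\mathrm{diag}(\Lambda_1,0)\,Q^\sfT$ with $Q=(Q_1\ P)$ orthogonal, $Q_1$ spanning $\ran M$ and $P$ spanning $\Ker M$ (this is consistent with the hypothesis on $P$). By \eqref{eq:normal}, $\mct_M^\perp=\{P W P^\sfT : W\in\mcs^d\}$, and by \eqref{eq:normal2} this equals $\{X\in\mcs^n: MX=0\}$. Now use the form \eqref{eq:sap2} of the SAP: $M$ fails the SAP iff there is a nonzero $X\in\mcs^n$ with $MX=0$ and $X_{ij}=0$ for all $\{i,j\}\in V\cup E$; equivalently, iff there is a nonzero $W\in\mcs^d$ with $PWP^\sfT$ having all entries in positions $V\cup E$ equal to $0$, i.e. $\la E_{ij},PWP^\sfT\ra=p_i^\sfT W p_j=0$ for all $\{i,j\}\in V\cup E$. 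That is exactly the negation of the implication in $(iii)$. Hence $(i)\Leftrightarrow(iii)$.

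Putting the two equivalences together yields $(i)\Leftrightarrow(ii)\Leftrightarrow(iii)$, which is the claim. I do not anticipate a serious obstacle: the only point requiring a little care is the passage between $\la P^\sfT E_{ij}P,R\ra$ and $p_i^\sfT R p_j$, which hinges on $p_i^\sfT$ being the $i$-th row of $P$ and on $R$ symmetric, and the observation that $\{X: MX=0\}=\{PWP^\sfT: W\in\mcs^d\}$, which follows because any symmetric $X$ with $\ran X\subseteq\Ker M=\ran P$ factors through $P$ (here $P$ having full column rank is what is used). Everything else is a direct translation between the SAP reformulation \eqref{eq:sap2}, the tangent-space formulas \eqref{eq:normal}--\eqref{eq:normal2}, and the extreme-point criterion of Corollary~\ref{thm:conic}.
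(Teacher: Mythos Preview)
Your proposal is correct and follows essentially the same approach as the paper: the equivalence $(ii)\Leftrightarrow(iii)$ is obtained by a direct appeal to Corollary~\ref{thm:conic}, and the equivalence $(i)\Leftrightarrow(iii)$ is obtained by identifying $\mct_M^\perp=\{PWP^\sfT:W\in\mcs^d\}$ via \eqref{eq:normal}--\eqref{eq:normal2} and translating the SAP condition \eqref{eq:sap2} accordingly. The only cosmetic difference is that the paper splits $(i)\Leftrightarrow(iii)$ into two separate implications, whereas you argue the biconditional in one stroke via contrapositive; the paper also makes explicit the step $PWP^\sfT=0\Rightarrow W=0$ (using $P^\sfT P=I_d$), which you leave implicit in your ``nonzero $X$ $\Leftrightarrow$ nonzero $W$'' correspondence.
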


\begin{proof}The equivalence $(ii) \Longleftrightarrow(iii) $ follows directly from  Corollary~\ref{thm:conic}.
\smallskip 

\noindent $(i) \Longrightarrow (iii)$ Let $R\in \mcs^d$ such that $p_i^\sfT Rp_j=0$, i.e., $\la PRP^\sfT, E_{ij}\ra =0$ for all  $ \{i,j\} \in V \cup E.$   Thus the matrix  $Y=PRP^\sfT$ belongs to
  $\lin\{ E_{ij} : \{i,j\} \in V\cup E \}^\perp$ and satisfies $MY=0$. By~\eqref{eq:normal2} we have that  $Y\in \mct_M^\perp$ and then   $(i)$ implies $Y=0$ and thus $R=0$ (since $P^\sfT P=I_r$).
  
\smallskip 
\noindent $ (iii) \Longrightarrow (i)$ Write $M=Q
\left(\begin{matrix}\Lambda_1 & 0\\
0 & 0\end{matrix}\right)Q^\sfT$, where $Q=(Q_1\ P)$ is orthogonal and the columns of $Q_1$ form a basis of the range of $M$.
Consider a matrix    $Y \in \mct_M^{\perp}\cap \lin \{E_{ij} : \{i,j\} \in \E\}$. Then, 
 by \eqref{eq:normal},  $Y=PRP^\sfT$ for some matrix  $R \in \mcs^d$. Moreover,  $\la Y,E_{ij}\ra=\la PRP^\sfT,E_{ij}\ra=0$ for all  $\{i,j\} \in V\cup E$, which  by $(iii)$ implies that $R=0$ and thus $Y=0$. 
  \qed
\end{proof}



\medskip 

Our final observation in this section is that a psd matrix having the SAP can 
be also understood as a nondegenerate solution of some semidefinite program. 

\begin{theorem}\label{thm:SAPtosdp}Consider a graph $G=([n],E)$ and let  $M\in  \mcc(G) \cap \mcs^+_n$. The following assertions  are equivalent:
\begin{itemize}
\item[(i)] M satisfies  the Strong Arnold Property.
\item[(ii)] $M$ is a primal nondegenerate solution for the semidefinite program:
\begin{equation*}\label{eq:sap}
\sup_X \{ \la C,X\ra :  \la E_{ij},X \ra=0 \text{ \rm  for }   \{i,j\} \in \E, \ X \psd 0\},
\end{equation*} 
for any $C\in \mcs^n$.
\item[(iii)] $M$ is a dual nondegenerate solution for the dual of the semidefinite program:
\begin{equation}\label{eq:duala}
\sup_X \{0:  \la E_{ij},X \ra=a_{ij} \text{ \rm for  }   \{i,j\} \in V\cup E, \ X \psd 0\},
\end{equation}
for any $a\in \mcs_+(G)$.
\end{itemize}
 \end{theorem}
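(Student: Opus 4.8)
The strategy is to prove $(i)\Leftrightarrow(iii)$ and $(i)\Leftrightarrow(ii)$ by recognizing that the Strong Arnold Property, in the equivalent form \eqref{eq:sap2}, is literally a nondegeneracy statement once one identifies the correct tangent-space and constraint data. First I would set up $(iii)$: given $a\in\mcs_+(G)$, the program \eqref{eq:duala} is a feasibility SDP whose constraint matrices are exactly $\{E_{ij}:\{i,j\}\in V\cup E\}$ and whose dual (in the form \eqref{eq:dualsdp} with $J=\emptyset$) reads
\begin{equation*}
\inf_{y,Z}\Big\{\textstyle\sum_{\{i,j\}\in V\cup E} y_{ij}a_{ij}\ :\ \sum_{\{i,j\}\in V\cup E} y_{ij}E_{ij}-0 = Z\succeq 0\Big\}.
\end{equation*}
Since the objective $C$ here is the zero matrix, $Z$ is dual feasible precisely when $Z\succeq0$ and $Z\in\lin\{E_{ij}:\{i,j\}\in V\cup E\}$, i.e. when $Z$ is psd and supported on $V\cup E$; equivalently $Z\in\mcs^n_+$ with $\la E_{ij},Z\ra=0$ for all $\{i,j\}\in\overline E$, which is exactly $Z\in\mcc(G)\cap\mcs^n_+$. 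So $M$ is a dual feasible solution. Now apply Definition~\ref{def:nondeg}: dual nondegeneracy of $M=Z$ means $\mct_M+\lin\{E_{ij}:\{i,j\}\in I\cup J_Z\}=\mcs^n$. Since $J=\emptyset$ here, $I\cup J_Z = V\cup E$, and this is precisely condition \eqref{def:sap} defining the SAP. Hence $(i)\Leftrightarrow(iii)$, and crucially this holds for every $a\in\mcs_+(G)$ since the choice of $a$ affects only the objective value, not the feasible region or the nondegeneracy condition — I should spell that out so the "for any $a$" quantifier is justified.

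Next I would handle $(i)\Leftrightarrow(ii)$. Here the program is $\sup_X\{\la C,X\ra : \la E_{ij},X\ra=0\ (\{i,j\}\in\overline E),\ X\succeq 0\}$, whose feasible region is exactly $\mcc(G)\cap\mcs^n_+$, so $M$ is primal feasible. The constraint matrices are now $\{E_{ij}:\{i,j\}\in\overline E\}$, again with no inequality constraints, so $J_M=\emptyset$ and $I = \overline E$. By Definition~\ref{def:nondeg}, $M$ is primal nondegenerate iff $\mct_M+\lin\{E_{ij}:\{i,j\}\in\overline E\}^{\perp}=\mcs^n$. The point is that $\lin\{E_{ij}:\{i,j\}\in\overline E\}^{\perp}=\lin\{E_{ij}:\{i,j\}\in V\cup E\}$ — the span of $E_{ij}$ over the non-edges is the orthogonal complement (in the trace inner product) of the span of $E_{ij}$ over the diagonal pairs and edges, since $\{E_{ij}:\{i,j\},\ i\le j\}$ is an orthogonal basis of $\mcs^n$ and $V\cup E$ together with $\overline E$ partitions the index set $\{\{i,j\}:i\le j\}$. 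Substituting gives exactly \eqref{def:sap}, the SAP. Again the objective $C$ is irrelevant to the feasible region and to the nondegeneracy condition, so the equivalence holds for every $C\in\mcs^n$; I would note this explicitly.

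**The main obstacle.** There is no deep obstacle — the theorem is essentially a dictionary translation — but the delicate points I must get exactly right are: (a) correctly matching the roles of "primal" versus "dual" (the SAP appears as \emph{primal} nondegeneracy for program (ii) but as \emph{dual} nondegeneracy for program \eqref{eq:duala}, because the constraint sets $\overline E$ and $V\cup E$ swap roles between the two programs); (b) the orthogonality fact $\lin\{E_{ij}:\{i,j\}\in\overline E\}^\perp = \lin\{E_{ij}:\{i,j\}\in V\cup E\}$, which hinges on $\la E_{ij},E_{kl}\ra$ vanishing for distinct index pairs and being positive when they agree, together with the fact that $V\cup E$ and $\overline E$ partition all unordered pairs; and (c) justifying the "for any $C$" and "for any $a$" clauses by observing that $C$, resp. $a$, enters only the objective or the right-hand side, neither of which appears in the nondegeneracy conditions \eqref{eq:nondeg} and \eqref{eq:dnondeg}. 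With those three observations in place the proof is a short chain of rewrites; I would present it in three short blocks: the $\overline E$-orthogonality lemma inline, then $(i)\Leftrightarrow(ii)$, then $(i)\Leftrightarrow(iii)$.
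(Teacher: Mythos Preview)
Your proposal is correct and follows essentially the same approach as the paper: both recognize that the SAP condition \eqref{def:sap} is, verbatim, the nondegeneracy condition once one uses the orthogonal decomposition $\lin\{E_{ij}:\{i,j\}\in\overline E\}^{\perp}=\lin\{E_{ij}:\{i,j\}\in V\cup E\}$, and that the objective data $C$ and $a$ play no role in the nondegeneracy equations. The paper's proof is simply a terser version of yours, invoking the orthogonal-complement observation and Definition~\ref{def:nondeg} in one line; your version spells out the dual feasible region, the feasibility of $M$, and the ``for any $C$/$a$'' clauses explicitly, which is fine.
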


\begin{proof}Taking orthogonal complements in \eqref{def:sap} we see that $M$ satisfies the SAP if and only if 
${\mathcal T_M}^\perp \cap\lin\{E_{ij}: \{i,j\}\in \overline E\}=\{0\}$.
Moreover, observe that the feasible region of the dual of the semidefinite program (\ref{eq:duala}) is equal to $\mcs^n_+\cap \mathcal C(G)$.
Now, using 
(\ref{eq:nondeg}), we obtain the equivalence of $(i), (ii)$ and $ (iii)$.
\end{proof}

\subsection{Graph parameters}\label{sec:param}

In this section we  explore the  relation between the two graph parameters $\gd(\cdot)$ and $\nu^=(\cdot)$ using the machinery developed in the previous sections. Recall that   $\mcs_+(G)$ denotes the set of $G$-partial psd matrices.

\begin{definition}
 Given a graph $G=(V,E)$,  a vector $a\in\mcs_+(G) $ and an integer $k\ge 1$, a  {\em Gram representation} of $a$ in $\oR^k$ 
consists of  a set of vectors $p_1,\ldots,p_n\in \oR^k$  such that $$p_i^\sfT p_j=a_{ij}\  \text{ \rm for all }  \{i,j\} \in   V\cup E.$$
The Gram dimension of    $a\in \mcs_+(G)$, denoted as $\gd(G,a)$,  is  the smallest integer $k\ge 1$ for which $a$ has a Gram representation in $\oR^{k}$. 
\end{definition}

\begin{definition} 
The {\em Gram dimension} of a graph $G$  is defined as
\begin{equation}\label{gramdimdef}
 \gd(G)=\underset{a \in \mcs_+(G)}{\max} \gd(G,a).
 \end{equation}
\end{definition}
This graph parameter was introduced and studied in~\cite{LV,LV12}, motivated by  its relevance to the low rank positive semidefinite matrix completion problem. 
Indeed, if   $G$ is a graph satisfying  $\gd(G)\le k$,   then  every $G$-partial psd matrix also has a psd completion of rank at most $k$. In~\cite{LV,LV12} the graph parameter $\gd(\cdot)$ is shown to be minor monotone and the graphs with small Gram dimension are characterized:
 $\gd(G)\le 2 \Longleftrightarrow  G$ is a forest (no $K_3$ minor),
 $\gd(G)\le 3 \Longleftrightarrow G$ is series-parallel (no $K_4$ minor),
 $\gd(G)\le 4 \Longleftrightarrow G$ has no $K_5$ and $K_{2,2,2}$ minors.

 
\medskip
 Next we  recall the definition of  the graph parameter $\nu^=(\cdot )$. 

\begin{definition}\cite{H96,H03}\label{nu} Given a graph $G=([n],E)$ the  parameter $\nu^=(G)$ is defined as the maximum corank of a matrix $M\in {\mathcal C}(G)\cap \mcs^n_+$ satisfying the SAP.
\end{definition}

The study of the parameter $\nu^=(\cdot)$  is motivated by its relation to the  Colin de Verdi\`ere graph parameter $\mu(\cdot)$ mentioned above; for instance, $\mu(G)\le \nu^=(G)$ for any graph. In~\cite{H96,H03} it is shown that $\nu^=(\cdot)$ is minor monotone and the graphs with small value of $\nu^=(\cdot)$ are characterized:   $\nu^=(G)\le 2 \Longleftrightarrow  G$ is a forest (no $K_3$ minor),
 $\nu^=(G)\le 3 \Longleftrightarrow G$ is series-parallel (no $K_4$ minor),
 $\nu^=\le 4 \Longleftrightarrow G$ has no $K_5$ and $K_{2,2,2}$ minors.

\medskip
In view of the above  two characterizations it is  natural to try to  identify the exact relation between these  two graph parameters.  The following theorem is a first result in this direction.

 
\begin{theorem}\cite{LV}\label{thm:v_vs_gd} For any graph $G$, $\gd(G) \ge \nu^=(G)$.
\end{theorem}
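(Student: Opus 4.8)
The plan is to show that any matrix $M$ achieving the maximum corank in the definition of $\nu^=(G)$ gives rise to a $G$-partial psd matrix whose Gram dimension equals $\corank M$. Let $M\in\mathcal C(G)\cap\mathcal S^n_+$ satisfy the SAP with $\corank M=\nu^=(G)=:d$. Fix an orthonormal basis for $\Ker M$ and let $P\in\oR^{n\times d}$ be the matrix whose columns are these basis vectors, with rows $p_1^\sfT,\ldots,p_n^\sfT$. Define the $G$-partial psd matrix $a\in\oR^{V\cup E}$ by $a_{ij}=p_i^\sfT p_j$ for all $\{i,j\}\in V\cup E$; by construction $a\in\mathcal S_+(G)$, since $\gram(p_1,\ldots,p_n)=PP^\sfT$ is a psd completion of rank $d$.

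The key step is to prove that $\gd(G,a)=d$, i.e.\ that $a$ has no Gram representation in dimension smaller than $d$. Suppose $q_1,\ldots,q_n\in\oR^k$ satisfy $q_i^\sfT q_j=a_{ij}$ for all $\{i,j\}\in V\cup E$; then $Y:=\gram(q_1,\ldots,q_n)$ is a psd completion of $a$, so $Y\succeq 0$, $\rankspace Y\le k$. By Theorem~\ref{thm:godsil} (the equivalence $(i)\Leftrightarrow(ii)$), the SAP of $M$ implies that $PP^\sfT$ is an extreme point of the spectrahedron $\{X\succeq 0:\la E_{ij},X\ra=p_i^\sfT p_j\ \text{for}\ \{i,j\}\in V\cup E\}$, and this spectrahedron is exactly the set of psd completions of $a$. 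Hence $PP^\sfT$ is the \emph{unique} psd completion of $a$ — wait, that is not quite right: extreme points need not be unique. The correct route is: every psd completion $Y$ of $a$ satisfies $\rankspace Y\ge d$. Indeed, I would argue that $PP^\sfT$ is an extreme point of a face on which the rank cannot drop; more directly, I would invoke Theorem~\ref{thm:godsil}$(iii)$: if $Y=QQ^\sfT$ with $Q\in\oR^{n\times k}$ and $k<d$, then consider the linear map $\oR^{n\times d}\to\mathcal S^n$ restricted appropriately — actually the cleanest argument is a dimension count showing that a low-rank completion forces a nonzero perturbation of $PP^\sfT$ inside the spectrahedron, contradicting extremality. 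So the main obstacle is bridging "extreme point" to "minimum rank $d$": this is where I expect to spend the most care.

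To carry that out, I would use the following standard fact: if $X_0$ is a psd completion of $a$ of minimum rank $r$, then $X_0$ is an extreme point of the completion spectrahedron, and moreover \emph{every} extreme point has rank $\ge$ (minimum rank) is false in general — but the relevant implication is the converse direction. The clean statement I actually need is: the minimum rank over the spectrahedron $\mathcal P_a=\{X\succeq0:\la E_{ij},X\ra=a_{ij}\}$ equals $\gd(G,a)$, and since $PP^\sfT\in\mathcal P_a$ has rank $d$ while being an extreme point with $\per_{\mathcal P_a}(PP^\sfT)=\{0\}$, Corollary~\ref{thm:conic} tells us that $\lin\{P^\sfT E_{ij}P:\{i,j\}\in V\cup E\}=\mathcal S^d$. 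Now if some completion $Y=QQ^\sfT$ had rank $k<d$, then the map $R\mapsto QRQ^\sfT$ shows the constraints cut out only $\mathcal S^k$-worth of equations on a $k$-dimensional representation, and a rank/parameter comparison (the space of Gram matrices of $n$ vectors in $\oR^k$ modulo the $O(k)$-action has dimension $nk-\binom{k}{2}$, versus $|V\cup E|$ constraints) does not immediately give a contradiction — so instead I will argue directly: from $\Ker M=\ran P$ and the chain of inclusions as in the proof of Theorem~\ref{thm:sphcorp}, any psd completion $Y$ of $a$ satisfies $\Ker Y\supseteq$ nothing forced, so I must instead use that $M$ itself serves as a dual certificate. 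Concretely, $M\in\mathcal S^n_+$, $M$ is supported on $\overline E$-complement appropriately (i.e.\ $M\in\mathcal C(G)$ means $M_{ij}$ is unconstrained on $V\cup E$ and arbitrary), and $\la M,PP^\sfT\ra=\la M, \text{(any completion differ)}\ra$ — here $M$ plays the role of the stress matrix $Z$ in Theorem~\ref{thm:sphcorp}. Setting $Z:=M$, conditions (i),(ii) of that theorem hold with $C=S=\emptyset$ (so (iii) is void); (iv) says $\corank Z=d$; (v) says $ZP=0$, true since $\ran P=\Ker M$; (vi) is exactly Theorem~\ref{thm:godsil}$(iii)$, which holds because $M$ has the SAP. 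Therefore Theorem~\ref{thm:sphcorp} applies: the framework $G(\mathbf p)$ is universally completable, i.e.\ $a$ has a \emph{unique} psd completion, namely $PP^\sfT$ of rank $d$. Hence $\gd(G,a)=d=\nu^=(G)$, and therefore $\gd(G)\ge\gd(G,a)=\nu^=(G)$, completing the proof.

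In summary: take an optimal SAP matrix $M$ for $\nu^=$, use its kernel to build a $G$-partial psd matrix $a$, verify that $M$ is a spherical stress matrix satisfying all hypotheses of Theorem~\ref{thm:sphcorp} (the nondegeneracy hypothesis (vi) being precisely the SAP via Theorem~\ref{thm:godsil}), conclude $a$ has a unique psd completion of rank $\corank M$, so $\gd(G)\ge\gd(G,a)=\corank M=\nu^=(G)$. The one subtle point to get exactly right is that universal completability forces the Gram dimension to equal $d$ (not just $\le d$): uniqueness of the completion means $PP^\sfT$ is the only psd matrix with the prescribed entries, and its rank is $d$, so $\gd(G,a)=d$ immediately.
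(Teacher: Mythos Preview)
Your proof is correct and follows essentially the same approach as the paper: take an optimal SAP matrix $M$ for $\nu^=(G)$, build the $G$-partial matrix $a$ from a basis of $\Ker M$, and show that $a$ has $PP^\sfT$ as its unique psd completion, giving $\gd(G,a)=\corank M$. The only cosmetic difference is that you invoke Theorem~\ref{thm:sphcorp} (with $Z=M$ as spherical stress matrix and condition~(vi) supplied by Theorem~\ref{thm:godsil}(iii)), whereas the paper phrases the same argument in the SDP duality language of Theorem~\ref{thmnewnu}/Corollary~\ref{thm:gdnu}, observing that $M$ is a dual nondegenerate, strictly complementary optimal solution of~\eqref{eq:sdp3d}; these two packagings are equivalent for bar frameworks, as the paper itself remarks after Theorem~\ref{thm:sphcorp}.
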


It is not known  whether the two graph parameters coincide or not. 
 We now  derive a new characterization of the parameter $\nu^=(\cdot)$ in terms of the maximum Gram dimension of certain $G$-partial psd matrices satisfying some nondegeneracy property,  which could be helpful to clarify the links between the two parameters. Recall that with  a vector $a \in \mcs_+(G)$ we can associate the following pair  of primal and dual  semidefinite programs:
  \begin{equation}\tag{$P_a$}\label{eq:sdp3}
\underset{X}{{\rm sup}} \left\{ 0 : \la E_{ij},X\ra=a_{ij} \ \text{ for } \{i,j\} \in V\cup E,   \text{ and } X \psd 0\right\},
\end{equation}
\begin{equation}\tag{$D_a$}\label{eq:sdp3d}
\underset{y,Z}{\inf}\{ \sum_{\{i,j\} \in V\cup E}y_{ij}a_{ij}: \sum_{\{i,j\} \in V\cup E}y_{ij}E_{ij}=Z\psd 0\}.
\end{equation}

Notice that, for any $a\in \mcs_+(G)$,  the primal program \eqref{eq:sdp3} is feasible and the dual program \eqref{eq:sdp3d} is strictly feasible. Thus there is no duality gap. 

\begin{definition}
Given a graph $G$, let   
$\mathcal{D}(G)$ denote the set of partial matrices $a\in \mcs_+(G)$ for which the semidefinite program \eqref{eq:sdp3d} has a nondegenerate optimal solution.
\end{definition}

We can now reformulate the parameter $\nu^=(G)$ as the maximum Gram dimension of a partial matrix in $\mathcal D(G)$.

\begin{theorem} \label{thmnewnu}
For any graph $G$ we have that 
$$ \nu^=(G)=\underset{a \in \mathcal{D}(G)}\max \gd(G,a).$$ 
\end{theorem}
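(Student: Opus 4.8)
The statement to prove is $\nu^=(G) = \max_{a\in\mathcal D(G)}\gd(G,a)$, so I would prove two inequalities. The key translation device is Theorem \ref{thm:SAPtosdp}(iii): a psd matrix $M\in\mathcal C(G)$ satisfies the SAP if and only if it is dual nondegenerate for the program \eqref{eq:duala} (equivalently \eqref{eq:sdp3d}) for some $a\in\mcs_+(G)$. The other ingredients are Theorem \ref{main} (equating dual nondegeneracy with uniqueness of the primal optimum, given strict complementarity and $J_X=J_Z$, which here is vacuous since $J=\emptyset$), the fact that a $G$-partial matrix with unique psd completion of rank $r$ has $\gd(G,a)=r$, and the correspondence (Theorem \ref{thm:godsil}) between nullspace representations of $M$ and Gram representations of the associated partial matrix.

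\textbf{Inequality $\nu^=(G)\le \max_{a\in\mathcal D(G)}\gd(G,a)$.} Let $M\in\mathcal C(G)\cap\mcs^n_+$ satisfy the SAP with $\cor M = d = \nu^=(G)$. Write $M = Q\begin{pmatrix}\Lambda_1 & 0\\ 0&0\end{pmatrix}Q^\sfT$ with $Q=(Q_1\ P)$, where the columns of $P\in\mr^{n\times d}$ form an orthonormal basis of $\Ker M$; let $p_1,\dots,p_n$ be the rows of $P$ and set $a_{ij}=p_i^\sfT p_j$ for $\{i,j\}\in V\cup E$, so $a\in\mcs_+(G)$ with Gram representation $PP^\sfT$. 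By Theorem \ref{thm:SAPtosdp}, $M$ is a feasible solution of the dual program \eqref{eq:sdp3d} (indeed $M=\sum_{\{i,j\}\in V\cup E}y_{ij}E_{ij}$ is supported on $V\cup E$ since $M\in\mathcal C(G)$) and it is dual nondegenerate for this program; moreover $\langle M, PP^\sfT\rangle = \mathrm{Tr}(M PP^\sfT)=0$ since $MP=0$, so $PP^\sfT$ and $M$ form a strictly complementary primal–dual optimal pair (ranks $d$ and $n-d$ sum to $n$). Hence $a\in\mathcal D(G)$. Applying Theorem \ref{main} (with $J=\emptyset$), dual nondegeneracy of $M$ forces $PP^\sfT$ to be the unique primal optimum of \eqref{eq:sdp3}, i.e. $a$ has a unique psd completion, of rank $d$. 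Therefore $\gd(G,a)=d=\nu^=(G)$, giving the desired inequality.

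\textbf{Inequality $\nu^=(G)\ge \max_{a\in\mathcal D(G)}\gd(G,a)$.} Let $a\in\mathcal D(G)$ and let $(y,Z)$ be a nondegenerate optimal solution of \eqref{eq:sdp3d}. By Theorem \ref{thm:uprimal} (no duality gap, both optima attained), the primal \eqref{eq:sdp3} has a unique optimal solution $X=PP^\sfT$ with $P\in\mr^{n\times r}$, $r=\rankspace X$; uniqueness means $a$ has a unique psd completion, so $\gd(G,a)=r$. I must exhibit a matrix in $\mathcal C(G)\cap\mcs^n_+$ with corank $r$ satisfying the SAP. Since there is no duality gap, $\langle X,Z\rangle=0$, hence $ZX=0$ and $\Ker Z\supseteq\ran X$. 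I claim $Z$ can be taken strictly complementary to $X$, i.e. $\rankspace Z = n-r$: this is where the nondegeneracy input is genuinely used — if $\ran X\subsetneq\Ker Z$ strictly, one argues (as in the proof of the converse direction of strict-complementarity results) that $Z$ is then dual degenerate, contradiction; alternatively, since the central path or a suitable maximally-complementary optimal pair always exists, combined with uniqueness of $X$ and dual nondegeneracy one gets strict complementarity. Granting this, $Z\in\mcs^n_+$ is supported on $V\cup E$, so $Z\in\mcc(G)$, has corank $r$, and by Theorem \ref{thm:SAPtosdp}(iii) (applied to $Z$ in the role of $M$, since $Z$ is dual nondegenerate) $Z$ satisfies the SAP. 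Hence $\nu^=(G)\ge \cor Z = r = \gd(G,a)$, completing the proof. \qed

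\textbf{Main obstacle.} The delicate point is establishing strict complementarity in the second inequality: from a nondegenerate dual optimum $(y,Z)$ and uniqueness of the primal optimum $X$, I need $\rankspace X + \rankspace Z = n$ rather than merely $ZX=0$. The cleanest route is to invoke the standard fact that SDPs with no duality gap admit a strictly complementary optimal pair when a maximally complementary solution exists, or to run the argument in the style of the (commented-out) proof of Theorem \ref{thm:converse}: if $Z$ were not of full complementary rank, then $\mct_Z^\perp\cap\lin\{E_{ij}:\{i,j\}\in V\cup E\}\ne\{0\}$ would produce a perturbation of $X$ within the primal feasible region, contradicting uniqueness of $X$ — so in fact one shows directly that the unique primal optimum being an extreme point, together with dual nondegeneracy, pins down $\cor Z$. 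I would spell this out carefully using \eqref{eq:normal2} and Corollary \ref{thm:conic}.
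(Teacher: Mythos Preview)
Your approach is essentially the same as the paper's, and both inequalities are handled the same way. However, the ``main obstacle'' you identify in the second inequality is not an obstacle at all. You do not need strict complementarity of $Z$ and $X$; you only need the inequality $\corank Z \ge \rankspace X = r$, which follows immediately from $ZX=0$. Indeed, since $Z$ is dual nondegenerate, Theorem~\ref{thm:SAPtosdp}(iii) gives that $Z\in\mcc(G)\cap\mcs^n_+$ satisfies the SAP, and therefore $\nu^=(G)\ge \corank Z \ge r = \gd(G,a)$. This is exactly what the paper does: it notes that $AM=0$ implies $\corank M\ge \rankspace A$, and since $M$ is a candidate for $\nu^=(G)$, the inequality follows directly.

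So your proof is correct once you drop the attempt to establish $\rankspace Z = n-r$, which is neither needed nor (as you sensed) easily available without further argument.
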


\begin{proof}Suppose that $\underset{a \in \mathcal{D}(G)}\max \gd(G,a)=\gd(G,a^*)$.  As $a^* \in \mathcal{D}(G)$  it follows that $(D_{a^*})$ has  a nondegenerate optimal solution which we denote by $M$.  Then, Theorem~\ref{thm:uprimal} implies that~$(P_{a^*})$ has a unique solution which we denote by  $A$. Notice that the matrix $A$ is the unique psd  completion of the partial matrix $a^*\in \mcs_+(G)$ which implies that $\gd(G,a^*)=\rankspace A$. Moreover, as $A$ and $Z$ are a pair of primal dual optimal solutions we have that $AM=0$ which implies that $\cor M \ge~\rankspace A$. As the matrix $M$ is feasible for $\nu^=(G)$ (recall Definition~\ref{nu} and Theorem~\ref{thm:SAPtosdp}) it follows that $\nu^=(G)\ge \underset{a \in \mathcal{D}(G)}\max \gd(G,a)$. 

For the other direction, assume $\nu^=(G)=\cor M=d$
where $ M\in~\mcc(G)\cap \mcs^n_+$ and $M$  satisfies the SAP. Let    $P\in \mr^{n\times d}$ be a matrix whose   columns form a basis for $\Ker M$ and consider the partial matrix $a \in \mcs_+(G)$ defined as $a_{ij}=(PP^\sfT)_{ij}$ for every $\{i,j\} \in V\cup E$. As $\la M,PP^\sfT \ra=0$ it follows  that $M$ is a dual nondegenerate optimal solution for \eqref{eq:sdp3d} and thus $a \in \mathcal{D}(G)$.  Additionally,   as $\cor M =\rankspace PP^\sfT$ we have that   $M$ and $PP^\sfT$ are a pair of strict complementary optimal solutions for~\eqref{eq:sdp3} and~\eqref{eq:sdp3d}, respectively.  Then Theorem~\ref{thm:converse} implies that the matrix $PP^\sfT$ is the   unique optimal solution of \eqref{eq:sdp3} and thus $\gd(G,a)=\rankspace PP^{\sfT}=\cor M=\nu^=(G)$.\qed
\end{proof}

\begin{corollary}\label{thm:gdnu}
 For any graph $G$,  we have that $\gd(G)\ge \nu^=(G)$. Moreover, equality $\gd(G)=\nu^=(G)  $ holds if and only if there exists some $a  \in \mathcal{D}(G)$ for which $\gd(G)=\gd(G,a)$.
\end{corollary}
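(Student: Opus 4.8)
The plan is to deduce both assertions directly from the reformulation of $\nu^=(\cdot)$ obtained in Theorem~\ref{thmnewnu}, without any further manipulation of semidefinite programs. First I would record the inequality: by definition~\eqref{gramdimdef} we have $\gd(G)=\max_{a\in\mcs_+(G)}\gd(G,a)$, whereas Theorem~\ref{thmnewnu} gives $\nu^=(G)=\max_{a\in\mathcal{D}(G)}\gd(G,a)$. Since $\mathcal{D}(G)\subseteq\mcs_+(G)$, the second maximum is taken over a subset of the index set of the first, so $\nu^=(G)\le\gd(G)$.

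For the equivalence I would argue the two implications separately. If some $a\in\mathcal{D}(G)$ satisfies $\gd(G,a)=\gd(G)$, then $\nu^=(G)=\max_{a'\in\mathcal{D}(G)}\gd(G,a')\ge\gd(G,a)=\gd(G)$, which combined with the inequality above yields $\gd(G)=\nu^=(G)$. Conversely, if $\gd(G)=\nu^=(G)$, it suffices to exhibit a maximizer of $a\mapsto\gd(G,a)$ over $\mathcal{D}(G)$: since $\gd(G,a)$ is an integer lying in $\{1,\dots,n\}$, the maximum in Theorem~\ref{thmnewnu} is attained at some $a\in\mathcal{D}(G)$, and for this $a$ we have $\gd(G,a)=\nu^=(G)=\gd(G)$, as required.

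I do not expect a genuine obstacle here; the only point needing a word of care is that the maximum in Theorem~\ref{thmnewnu} is actually attained, which is immediate from the boundedness and integrality of the Gram dimension. The real content of the statement is carried entirely by Theorem~\ref{thmnewnu}, and this corollary is essentially a bookkeeping consequence of it together with the definition of $\gd(G)$.
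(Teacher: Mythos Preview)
Your proposal is correct and matches the paper's approach: the paper states Corollary~\ref{thm:gdnu} without proof, treating it as an immediate consequence of Theorem~\ref{thmnewnu} together with the inclusion $\mathcal{D}(G)\subseteq\mcs_+(G)$ and the definition~\eqref{gramdimdef}, exactly as you spell out. Your remark that the maximum in Theorem~\ref{thmnewnu} is attained is already implicit in the paper's use of ``$\max$'' there (and indeed $\mathcal{D}(G)\ne\emptyset$ since, e.g., the zero partial matrix lies in it via the dual optimal solution $Z=I$).
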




\bigskip


\begin{thebibliography}{1}

\bibitem{A07} A.Y. Alfakih. On dimensional rigidity of bar-and-joint frameworks. {\em Discrete Appl. Math.,}
{\bf 155:}1244--1253, 2007.

\bibitem{A10}A.Y. Alfakih. On the universal rigidity of generic bar frameworks. {\em Contrib. Discrete Math.},
{\bf 5:}7--17, 2010.

\bibitem{A10b}A.Y. Alfakih. 
On bar frameworks, stress matrices and semidefinite programming.
 {\em  Math. Program. Ser. B}, {\bf 129(1)}:113--128,  2011.




\bibitem{AHO}
F. Alizadeh, J.A. Haeberly and M. L. Overton. 
Complementarity and nondegeneracy in semidefinite programming.
{\em Math. Program. Ser. B,} {\bf 77(2):}129--162, 1997.



\bibitem{BJT} 
W. Barrett,  C.R. Johnson, P. Tarazaga.
The real positive definite completion problem for a simple cycle. {\em Linear Algebra  Appl.,} {\bf 192:}3--31, 1993.

\bibitem{BPT} 
G. Blekherman, P.A. Parrilo, and R.R. Thomas (eds.).
 {\em Semidefinite Optimization and Convex Algebraic Geometry.}  
MOS-SIAM Series on Optimization 13, 2012.


\bibitem{CdV90}
 Y. Colin de Verdi\`ere.  
Sur un nouvel invariant des graphes et un crit\`ere de planarit\'e.
{\em J. Comb. Theory Ser. B}, {\bf  50}:11-21, 1990.
 
 
  \bibitem{CdV98}
 Y. Colin de Verdi\`ere.  
 Multiplicities of eigenvalues and tree-width of graphs. 
 {\em J. Comb. Theory Ser. B,} {\bf 74(2)}:121--146, 1998.

\bibitem{C82} 
R. Connelly. 
Rigidity and energy. 
{\em Invent. Math.,} {\bf 66(1):}11--33, 1982.

\bibitem{Cb} 
R. Connelly. 
{\em Stress and Stability. }
Chapter of an unpublished book, 2001. 
Available online at 

\verb1http://www.math.cornell.edu/~connelly/Tensegrity.Chapter.2.ps1


\bibitem{Con}
R. Connelly. 
Generic global rigidity. 
{\em  Discrete Comput. Geom.,} {\bf 33}:549--563, 2005.

\bibitem{CW}
R. Connelly and W.J. Whiteley. 
  Global rigidity: The effect of coning, {\em Discrete   Comput. Geom.,} {\bf 43(4):}717--735, 2010.




\bibitem{DL}
A. Deza and M. Laurent. 
{\em Geometry of Cuts and Metrics.}
Springer, 2007.

\bibitem{ELV}
M. E.-Nagy, M. Laurent and A. Varvitsiotis. 
On bounded rank positive semidefinite matrix completions of extreme partial correlation matrices. 
Preprint,  arXiv:1205.2040.

\bibitem{G} 
C.~Godsil. 
{\em The Colin de Verdi\`ere Number}.
 Unpublished manuscript.  Available online  at 
 \verb1http://quoll.uwaterloo.ca/mine/Notes/cdv.pdf1

\bibitem{GT}
S.J. Gortler and D. Thurston.
 Characterizing the universal rigidity of generic frameworks. 
 Preprint,  arXiv:1001.0172.
 
\bibitem{GJSW}
R. Grone, C.R. Johnson, E.M. S\'a and H. Wolkowicz.
{Positive definite completions of partial Hermitian matrices}.
{\em Linear Algebra Appl.}, {\bf 58}:109--124, 1984.

\bibitem{H96}  
H. van der Holst. 
{\em Topological and Spectral Graph Characterizations.} 
Ph.D. thesis, University of Amsterdam, 1996.

\bibitem{H03}
H. van der Holst. 
Two tree-width-like graph invariants. 
{\em Combinatorica},  {\bf 23(4)}:633--651, 2003.

\bibitem{vdH97}
H. van der Holst. 
Graphs with magnetic Schr\"odinger operators of low corank.
 {\em J. Comb. Theory, Ser. B,} {\bf  84:} 311--339, 2002.

\bibitem{HLS99}
H. van der Holst, L. Lov\'asz and  A. Schrijver.
 The Colin de Verdi\`ere graph parameter.
 In {\em Graph Theory and Combinatorial Biology} (L. Lov\'asz, A. Gy\'arf\'as, G. Katona, A. Recski, L. Sz\'ekely, eds.), J\'anos Bolyai Mathematical Society, Budapest,  pp. 29--85, 1999.
 
\bibitem{Lau00} 
M. Laurent.
 Polynomial instances of the positive semidefinite and Euclidean distance matrix completion problems.
  {\em  SIAM J. Matrix Anal. A.,} {\bf  22:}874-894, 2000. 
 
\bibitem{Lau97}
M. Laurent.
The real positive semidefinite completion problem for series-parallel graphs.
 {\em Linear Algebra Appl.,} {\bf  252:}347--366, 1997. 

\bibitem{LV} 
M. Laurent and A. Varvitsiotis.
 The Gram dimension of a graph.  
In {\em Proceedings of the 2nd International Symposium on Combinatorial Optimization} (A.R. Mahjoub et al.,  Eds.): ISCO 2012, LNCS 7422, pp. 356--367, 2012.

\bibitem{LV12}
M. Laurent and A. Varvitsiotis.
A new graph parameter related to bounded rank positive semidefinite matrix completions. 
 Preprint, arXiv:1204.0734. 

\bibitem{LT} 
 C.-K. Li and B.-S. Tam. 
 A note on extreme correlation matrices. 
 {\em SIAM J. Matrix Anal. Appl.}, {\bf  15(3)}:903--908, 1994.





\bibitem{LS98}
L. Lov\'asz and  A. Schrijver.
 A Borsuk theorem for antipodal links and a spectral characterization of linklessly embeddable graphs.
 {\em  Proceedings  Am. Math. Soc.}, {\bf  126}:1275--1285, 1998.
 
 \bibitem{LS99}
L. Lov\'asz and A. Schrijver.  
On the null space of a Colin de Verdi\`ere matrix. {\em Annales de l' institut Fourier,}  {\bf 49(3):}1017--1026, 1999.



\bibitem{PV01}
I. Pak and D. Vilenchik.
 Constructing uniquely realizable graphs.
  Preprint, 2011. Available online at
  
   \verb1http://www.math.ucla.edu/~pak/papers/GRP4.pdf1



\bibitem{Pa00}
G. Pataki. 
The geometry of semidefinite programming. 
In {\em Handbook of Semidefinite Programming} 
(H. Wolkowicz, L. Vandenberghe and R. Saigal, eds.),  Kluwer, pp. 29--65, 2000.


\bibitem{Qi} 
H.-D. Qi. 
Positive semideÞnite matrix completions on chordal graphs and constraint nondegeneracy in semideÞnite programming. 
{\em Linear Algebra Appl.,} {\bf 430:}1151--1164, 2009.

\bibitem{RG95}
M. Ramana and A.J. Goldman. 
Some geometric results in semidefinite programming.
{\em J.  Global Optim.}, {\bf 7(1)}:33--50, 1995.

\bibitem{R97}
M.V. Ramana. 
An exact duality theory for semidefinite programming and its complexity implications. {\em Math. Program.,} {\bf 10:}351--365, 1997.

\bibitem{SW} F.V. Saliola and W.J. Whiteley. Some notes on the equivalence of first-order rigidity in various geometries.  arXiv:0709.3354v1.


\bibitem{SC} 
A. Singer and M. Cucuringu. 
Uniqueness of low-rank matrix completion by rigidity theory. 
{\em SIAM J.  Matrix Anal.  Appl.,} {\bf 31:}1621--1641, 2010.





\end{thebibliography}
\end{document}